\newcommand{\Ivo}[1]{\textcolor{black}{#1}}
\numberwithin{equation}{section}
\newtheorem{theorem}[equation]{Theorem}
\newtheorem*{theorem*}{Theorem}
\newtheorem{lemma}[equation]{Lemma}
\newtheorem*{conjecture*}{Mamma Conjecture}
\newtheorem*{conjecture1*}{Mamma Conjecture (revisited)}
\newtheorem{proposition}[equation]{Proposition}
\newtheorem{corollary}[equation]{Corollary}
\newtheorem*{corollary*}{Corollary}
\theoremstyle{remark}
\newtheorem{definition}[equation]{Definition}
\newtheorem{notation}[equation]{Notation}
\theoremstyle{remark}
\newtheorem{remark}[equation]{Remark}
\newcommand{\cA}{{\mathcal A}}
\newcommand{\cB}{{\mathcal B}}
\newcommand{\cC}{{\mathcal C}}
\newcommand{\cD}{{\mathcal D}}
\newcommand{\cI}{{\mathcal I}}
\newcommand{\cK}{{\mathcal K}}
\newcommand{\cM}{{\mathcal M}}
\newcommand{\cO}{{\mathcal O}}
\newcommand{\cP}{{\mathcal P}}
\newcommand{\cS}{{\mathcal S}}
\newcommand{\cT}{{\mathcal T}}
\newcommand{\bbA}{\mathbb{A}}
\newcommand{\bbF}{\mathbb{F}}
\newcommand{\bbQ}{\mathbb{Q}}
\newcommand{\bbZ}{\mathbb{Z}}
\newcommand{\Spt}{\mathsf{Spt}} % category of spectra
\DeclareMathOperator{\id}{id}
\DeclareMathOperator{\Id}{Id}
\DeclareMathOperator{\Mod}{Mod}
\DeclareMathOperator{\Mot}{Mot}
\DeclareMathOperator{\Spc}{Spc} %Balmer's space
\DeclareMathOperator{\Spec}{Spec} %Balmer's spectrum
\DeclareMathOperator{\Core}{Core} % monogenic core
\DeclareMathOperator{\LCore}{LCore} % localizing monogenic core
\DeclareMathOperator{\ECore}{ECore} % extended monogenic core
\newcommand{\bbK}{I\mspace{-6.mu}K}
\newcommand{\uk}{\underline{k}}
\newcommand{\dgcat}{\mathsf{dgcat}}
\newcommand{\dg}{\mathsf{dg}}
\newcommand{\add}{\mathsf{add}}
\newcommand{\loc}{\mathsf{loc}}
\newcommand{\Hom}{\mathsf{Hom}}
\newcommand{\Ho}{\mathsf{Ho}}
\newcommand{\op}{\mathsf{op}}
\newcommand{\too}{\longrightarrow}
\newcommand{\grotimes}{\mathop{\hat\otimes}}
\newcommand{\ie}{\textsl{i.e.}\ }
\newcommand{\Uloc}{U_k}     % simplified version for this article (stands for: add) 
\begin{document}

\title[Tensor triangular geometry of non-commutative motives]{Tensor triangular geometry of \\non-commutative motives}
\author{Ivo Dell'Ambrogio and Gon{\c c}alo~Tabuada}

\address{Ivo Dell'Ambrogio, Universit\"at Bielefeld, Fakult\"at f\"ur Mathematik, BIREP Gruppe, Postfach 10\,01\,31, 33501 Bielefeld, Germany}
\email{ambrogio@math.uni-bielefeld.de}

\address{Gon{\c c}alo Tabuada, Department of Mathematics, MIT, Cambridge, MA 02139. Dep. de Matem{\'a}tica e CMA,
         FCT-UNL,
         Quinta da Torre, 
         2829-516 Caparica,
         Portugal}
\email{tabuada@math.mit.edu}

\subjclass[2000]{18D20, 18E30, 19D50, 19D55}
\date{\today}

\keywords{Tensor triangular geometry, non-commutative motives, algebraic $K$-theory, cyclic homology}

\thanks{Gon{\c c}alo Tabuada was supported by the FCT-Portugal grant {\tt PTDC/MAT/098317/2008} and the J.H. and E.V. Wade award.}

\begin{abstract}
In this article we initiate the study of the tensor triangular geometry of the categories $\Mot_k^a$ and $\Mot_k^l$ of non-commutative motives (over a base ring~$k$). Since the full computation of the \Ivo{spectrum} of $\Mot^a_k$ and $\Mot^l_k$ seems completely out of reach, we provide some information about the spectrum of certain subcategories. More precisely, we show that when~$k$ is a finite field (or its algebraic closure) the spectrum of the {\em monogenic cores}  $\Core_k^a$ and $\Core_k^l$ (\ie\ the thick triangulated subcategories generated by the tensor unit) is closely related to the Zariski spectrum of~$\bbZ$. Moreover, we prove that if we slightly enlarge  $\Core_k^a$ to contain the non-commutative motive associated to the ring of polynomials $k[t]$, and assume that $k$ is a field of characteristic zero, then the corresponding spectrum is richer than the Zariski spectrum of~$\bbZ$.
\end{abstract}

\maketitle
\vskip-\baselineskip
\vskip-\baselineskip
\vskip-\baselineskip
%\tableofcontents  

%-----------------------------------------------------------------------
\section*{Introduction}
%-----------------------------------------------------------------------

%-----------------------------------------------------------------------
\subsection*{Tensor triangular geometry}
%-----------------------------------------------------------------------
Given a tensor triangulated  category $\cT$, such as the derived category of sheaves of modules over a scheme or the stable homotopy category of topological spectra, the classification of its objects up to isomorphism is a problem whose solution is in general out of reach. Hopkins and Smith, in their groundbreaking work~\cite{Hopkins}, had the original idea of classifying objects not up to isomorphism but rather up to some elementary operations like direct sum, tensor product, (de-)suspension, retract, and mapping cone. Intuitively speaking, if one object can be reached by another one throughout these elementary operations, the two objects contain the same information and should be identified. These general ideas were developed and matured during the last decade by Balmer and his collaborators into a subject named {\em Tensor triangular geometry}; see Balmer's ICM adress~\cite{Balmer-ICM}. The main idea is the construction of a topological space $\Spc(\cT)$ for every tensor triangulated category $\cT$, called the {\em spectrum of $\cT$}, which encodes the geometric aspects of $\cT$. Concretely, $\Spc(\cT)$ is given by the set of {\em prime} $\otimes$-ideals of~$\cT$, \ie those $\otimes$-ideals $\cP \varsubsetneq \cT$ such that ${\bf 1} \notin \cP$ and if $a\otimes b \in \cP$ then $a\in \cP$ or $b \in \cP$. The topology is the one generated by the open basis given by the complements of the support sets $\mathrm{supp}(a):=\{ \cP \in \Spc(\cT) \, |\, a \notin \cP \}$.
%-----------------------------------------------------------------------
\subsection*{Non-commutative motives}
%-----------------------------------------------------------------------
A {\em differential graded (=dg) category}, over a base commutative ring $k$, is a category enriched over complexes of $k$-modules (morphisms sets are complexes)
in such a way that composition fulfills the Leibniz rule\,:
$d(f\circ g)=(df)\circ g+(-1)^{\textrm{deg}(f)}f\circ(dg)$. Due to their remarkable properties, dg categories are nowadays widely used in algebraic geometry, representation theory, symplectic geometry, and even in mathematical physics; see Keller's ICM adress~\cite{Keller-ICM}. All the classical invariants such as Hochschild homology ($HH$), Quillen's algebraic $K$-theory ($K$), non-connective algebraic $K$-theory ($\bbK$), and even topological cyclic homology ($THH$), extend naturally from $k$-algebras to dg categories. In order to study all these invariants simultaneously the {\em universal additive and localizing invariants} of dg categories were constructed in~\cite{Additive}. Roughly speaking, the universal additive invariant consists of a functor $U^a_k: \dgcat_k \to \Mot^a_k$ defined on the category of dg categories and with values in a triangulated category, that preserves filtered colimits, inverts Morita equivalences (see \cite[\S4.6]{Keller-ICM}), sends split short exact sequences (see \cite[\S13]{Additive}) to direct sums
\begin{eqnarray*}
\xymatrix@C=1.5em@R=1.0em{
0 \ar[r] &  \cA \ar[r]  & \cB \ar[r]  \ar@/_0.5pc/[l] & \cC \ar[r] \ar@/_0.5pc/[l] &  0
} &\mapsto&
U^a_k(\cA) \oplus U^a_k(\cC) \simeq U^a_k(\cB)\,,
\end{eqnarray*}
and which is universal\footnote{The formulation of the precise universal property makes use of the language of Grothendieck derivators; see \cite[\S10]{Additive}.} with respect to these properties. The universal localizing invariant $U_k^l$ satisfies the same conditions as $U_k^a$ but sends moreover short exact sequences (see \cite[\S10]{Additive}) to distinguished triangles
\begin{eqnarray*}
0 \too \cA \too \cB \too \cC \too 0 &\mapsto& U_k^l(\cA) \to U_k^l(\cB) \to U_k^l(\cC) \to \Sigma U_k^l(\cA)\,.
\end{eqnarray*}
By construction, there is a well-defined triangulated functor $\Phi:\Mot_k^a \to \Mot_k^l$ such that $U_k^l=\Phi \circ U_k^a$. Hence, every localizing invariant is additive. All the mentioned invariants are localizing (except $K$) and so they factor uniquely through~$U_k^l$ (and hence through $U_k^a$). Quillen's algebraic $K$-theory although not localizing, is additive and so it factors through $\Mot_k^a$. Because of these universal properties, which are reminiscent of the theory of motives, $\Mot_k^a$ and $\Mot_k^l$ are called the {\em triangulated categories of non-commutative motives} (over the base ring $k$). The importance of these categories should be clear by now. For instance, among other applications, they allowed the first conceptual characterization of algebraic $K$-theory since Quillen and Thomason foundational works, a streamlined construction of the higher Chern characters, a unified and conceptual proof of the fundamental theorems in homotopy $K$-theory and periodic cyclic homology, the description of the fundamental isomorphism conjecture in terms of the classical Farrell-Jones conjecture, etc; see \cite{Conj,CT,Chern,Additive,Fund,Prod}. The tensor product of $k$-algebras extends naturally to dg categories, giving rise to a symmetric monoidal structure on $\dgcat$. Making use of Day's convolution product, this monoidal structure was extended (in a universal way) to $\Mot_k^a$ and $\Mot_k^l$; see \cite{CT1}. Hence, the triangulated categories of non-commutative motives carry moreover a compatible tensor product.

%-----------------------------------------------------------------------
\section{Statement of results}
%-----------------------------------------------------------------------
The categories $\Mot_k^a$ and $\Mot_k^l$ of non-commutative motives are tensor triangulated and so we are naturally in the realm of tensor triangular geometry. Notice that although defined in terms of very simple and precise universal properties, these categories are highly complex. For instance, all the information concerning additive and localizing invariants is completely encoded in $\Mot_k^a$ and $\Mot_k^l$. Hence, the full computation of their \Ivo{spectrum} is a major challenge which seems completely out of reach at the present time. In this article we give the first step towards the solution of this major challenge by providing some information about the spectrum of certain subcategories of $\Mot_k^a$ and $\Mot_k^l$. Let $\Core_k^a$ and $\Core_k^l$ be the {\em monogenic cores}\footnote{The authors are grateful to Greg Stevenson for suggesting this terminology.} of~$\Mot_k^a$ and $\Mot_k^l$, \ie the thick triangulated subcategories of $\Mot_k^a$ and $\Mot_k^l$ generated by the $\otimes$-unit object. Their rationalization will be denoted by $\Core^a_{k;\bbQ}$ and $\Core^l_{k;\bbQ}$ (see \S\ref{sub:ratcoef}).
\begin{theorem}\label{thm:main1}
Assume that the base ring $k$ is finite or that it is the algebraic closure of a finite field. Then, if we denote by~$n$ the number of prime ideals in $k$, we have an equivalence $ \Core^a_{k;\bbQ}\simeq \cD^{\mathrm{perf}} (\bbQ^n)$  of tensor triangulated categories. As a consequence, $\Spc(\Core^a_{k;\bbQ})$ has precisely~$n$ distinct points. The same result holds for $\Core^l_{k;\bbQ}$ if we further assume that the base ring $k$ is regular.
\end{theorem}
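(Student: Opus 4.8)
The plan is to reduce the statement to a computation of the graded endomorphism ring of the $\otimes$-unit, followed by an abstract recognition lemma. Write $\mathbf{1}$ for the $\otimes$-unit of $\Mot^a_k$, so that $\mathbf{1}=U^a_k(k)$ and $\Core^a_k=\langle\mathbf{1}\rangle$ is the thick subcategory it generates; similarly in $\Mot^l_k$. The first step is to invoke the fundamental identification of mapping spectra between non-commutative motives: $\mathrm{Hom}_{\Mot^a_k}(\mathbf{1},\Sigma^{-m}\mathbf{1})\cong K_m(k)$ for $m\ge 0$ and $0$ for $m<0$, with the graded ring structure induced by the usual product in algebraic $K$-theory, and $\mathrm{Hom}_{\Mot^l_k}(\mathbf{1},\Sigma^{-m}\mathbf{1})\cong\bbK_m(k)$ for all $m\in\bbZ$ in the localizing case. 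Passing to rational coefficients (\S\ref{sub:ratcoef}), and using that the objects of $\Core^a_k$ are compact so that rationalization is computed by tensoring Hom groups with $\bbQ$, one gets $\mathrm{End}^\bullet_{\Core^a_{k;\bbQ}}(\mathbf{1})\cong K_\ast(k)\otimes\bbQ$ (resp.\ $\bbK_\ast(k)\otimes\bbQ$); the crux is then to show this graded ring is concentrated in degree $0$, where it equals $\bbQ^n$.

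For the degree-$0$ concentration I would argue case by case. If $k$ is finite, then by Quillen's computation of $K_\ast(\bbF_q)$ together with Kuku's finiteness theorem for the higher $K$-groups of finite rings, $K_m(k)$ is a finite abelian group for every $m\ge 1$ and hence dies after $\otimes\bbQ$; and since a finite commutative ring is a product of $n$ finite local rings, $K_0(k)\cong\bbZ^n$ as a ring, so $K_0(k)\otimes\bbQ\cong\bbQ^n$. If $k=\overline{\bbF}_p=\operatorname{colim}_q\bbF_q$, then, since algebraic $K$-theory commutes with filtered colimits of rings, $K_m(k)\cong\operatorname{colim}_q K_m(\bbF_q)$ is still torsion for $m\ge 1$ (a filtered colimit of finite cyclic groups), while $K_0(k)\cong\bbZ$ and $k$, being a field, has a single prime ideal, so $n=1$. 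In the localizing case one has the extra hypothesis that $k$ is regular; then negative $K$-theory vanishes, so $\bbK_m(k)=K_m(k)$ for $m\ge 0$ and $\bbK_m(k)=0$ for $m<0$, and a regular finite commutative ring is a finite product of finite fields, so the same conclusion holds (the algebraic closure of a finite field is a regular field, hence is covered too). In every case $R:=\mathrm{End}^\bullet_{\Core^a_{k;\bbQ}}(\mathbf{1})$ is concentrated in degree $0$ and isomorphic, as a ring, to $\bbQ^n$.

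The remaining, and to my mind main, task is an abstract recognition statement which I would isolate as a lemma: \textit{if $\cT$ is an idempotent-complete tensor triangulated category with $\cT=\langle\mathbf{1}\rangle$, if $\mathrm{Hom}_\cT(\mathbf{1},\Sigma^m\mathbf{1})=0$ for $m\neq 0$, and if $R:=\mathrm{End}_\cT(\mathbf{1})$ is a finite product of fields $F_1\times\cdots\times F_n$, then $\cT\simeq\cD^{\mathrm{perf}}(R)$ as tensor triangulated categories.} To prove it, let $e_1,\dots,e_n\in R$ be the primitive idempotents, which split $\mathbf{1}\cong X_1\oplus\cdots\oplus X_n$ with $X_l:=e_l\mathbf{1}$; one checks that $\mathrm{Hom}_\cT(X_l,\Sigma^m X_{l'})=0$ unless $l=l'$ and $m=0$, in which case it is $F_l$, and (using biexactness of $\otimes$ and $\mathbf{1}\otimes\mathbf{1}\cong\mathbf{1}$) that $X_l\otimes X_{l'}\cong X_l$ if $l=l'$ and $0$ otherwise. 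Call an object \textit{elementary} if it is a finite direct sum of shifts of the $X_l$. A morphism between elementary objects is a matrix whose entries vanish off the matching $(l,m)$-components and, along each component, is a linear map of finite-dimensional $F_l$-vector spaces; since such a map admits a rank normal form after applying automorphisms to source and target, the cone of any morphism between elementary objects is again elementary, and likewise any idempotent endomorphism of an elementary object splits with elementary image. By d\'evissage it follows that $\cT=\langle\mathbf{1}\rangle$ consists precisely of the elementary objects, so the evident assignment $X_l\mapsto e_lR$ yields a triangulated equivalence $\cT\simeq\cD^{\mathrm{perf}}(R)$; this equivalence is monoidal because on both sides the tensor product is biexact and pinned down by $\mathbf{1}\otimes\mathbf{1}\cong\mathbf{1}$. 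Applying the lemma with $R=\bbQ^n$ gives $\Core^a_{k;\bbQ}\simeq\cD^{\mathrm{perf}}(\bbQ^n)$, and $\Spc(\cD^{\mathrm{perf}}(\bbQ^n))\cong\Spec(\bbQ^n)$ is a discrete set of $n$ points; the localizing case is then identical once the $\bbK$-theoretic input above is in place.

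The hard part is not any single step but the assembly: one must make sure the rationalization of \S\ref{sub:ratcoef} really computes Hom groups by $-\otimes\bbQ$ and keeps the category idempotent complete, and one must verify that the recognition lemma delivers a \textit{monoidal}, not merely triangulated, equivalence — without this one cannot read off $\Spc$. Conceptually the whole theorem rests on the vanishing $K_m(k)\otimes\bbQ=0$ for $m\ge 1$, a phenomenon special to finite fields and their algebraic closures, and precisely the reason one should not expect anything this clean for, e.g., $k=\bbQ$.
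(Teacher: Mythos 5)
Your computations of the graded endomorphism ring --- the vanishing of $K_m(k)\otimes\bbQ$ for $m>0$ via Quillen and finiteness, the identification $K_0(k)\simeq\bbZ^n$ via the structure theorem, and the extra regularity hypothesis to kill negative $\bbK$-theory in the localizing case --- match the paper's three Computations, and your recognition lemma (graded endomorphism ring concentrated in degree $0$ with $R$ a finite product of fields implies $\cT\simeq\cD^{\mathrm{perf}}(R)$) is a clean one-shot packaging of what the paper splits into Theorem~\ref{thm:general} and Lemma~\ref{lemma:cat_decomp}. There is, however, a genuine gap exactly at the place you flag but do not resolve: your lemma takes idempotent completeness of $\cT$ as a \emph{hypothesis}, and you apply it to $\cT=\Core^a_{k;\bbQ}$, which by Lemma~\ref{lemma:boot_tensor_exact} is a Verdier quotient (central localization) of $\Core^a_k$. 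Verdier quotients of idempotent-complete triangulated categories need not be idempotent complete, and as written your application is circular: you would only learn that $\Core^a_{k;\bbQ}$ is idempotent complete \emph{after} identifying it with $\cD^{\mathrm{perf}}(\bbQ^n)$.

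The paper resolves this by never working directly with $\Core^a_{k;\bbQ}$. It passes to the localizing monogenic core $\LCore_k=\langle\mathbf1\rangle_{\mathrm{loc}}$ and its Bousfield localization $L_\bbQ\LCore_k$, which has arbitrary coproducts and hence splits idempotents automatically, states the recognition result (Theorem~\ref{thm:general}) for a compactly generated category, and only at the very end compares back to $\Core^a_{k;\bbQ}$: the inclusion $\iota:\Core^a_{k;\bbQ}\hookrightarrow (L_\bbQ\LCore_k)^c$ is a priori just the idempotent completion (Remark~\ref{remark:id_compl_bootQ}), and it is shown to be essentially surjective \emph{a posteriori}, because every compact object in $L_\bbQ\LCore^a_k$ is a finite direct sum of shifts of $\mathbf1$. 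If you wish to stay at the small-category level, you can repair your argument by noting that the primitive idempotents of $\bbQ^n$ are already the primitive idempotents of $\bbZ^n=\mathrm{End}_{\Core^a_k}(\mathbf1)$, hence split before rationalization (in the honestly idempotent-complete $\Core^a_k$); with those splittings in hand your rank-normal-form dévissage then shows that every object of $\Core^a_{k;\bbQ}$ is elementary, and hence in particular that the category is idempotent complete after all --- but that extra step is needed, and your lemma should be restated so as not to presuppose what it proves. One smaller point: the monoidality of the equivalence deserves an actual check on the generators (the paper verifies $(F[i]\otimes F[j])\otimes\mathbf1\simeq (F[i]\otimes\mathbf1)\otimes(F[j]\otimes\mathbf1)$); "pinned down by $\mathbf1\otimes\mathbf1\simeq\mathbf1$" is a heuristic, not a proof.
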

A consequence of Theorem~\ref{thm:main1} is the fact that over a finite field or over its algebraic closure (in which case~$n=1$) the $\otimes$-unit object can be reached by any non-trivial non-commutative motive in the rationalized monogenic core throughout the elementary operations of direct sum, tensor product, (de-)suspension, and retract. Recall that Quillen algebraic $K$-theory ($K$) is an additive invariant and that Hochschild homology ($HH$) and non-connective algebraic $K$-theory ($\bbK$) are localizing invariants. Hence, they give rise to triangulated functors $\overline{K}: \Mot^a_k \to \Ho(\Spt)$ (where $\Ho(\Spt)$ stands for the homotopy category of spectra), $\overline{HH}: \Mot^l_k \to \cD(k)$ and $\overline{\bbK}:\Mot_k^l \to  \Ho(\Spt)$ such that $\overline{K} \circ U_k^a =K$, $\overline{HH}\circ U^l_k = HH$ and $\overline{\bbK}\circ U^l_k = \bbK$.
Periodic cyclic homology $(HP)$, although not a localizing invariant\footnote{Since it does not preserve filtered colimits.}, factors also through $\Mot^l_k$; see \S\ref{sec:HP}. We obtain then a triangulated functor $\overline{HP}: \Mot^l_k \to \cD_{2}(k)$, with values in the derived category of $2$-periodic 
 complexes, such that $\overline{HP}\circ U_k=HP$.
Given a tensor triangulated category~$\cT$, Balmer constructed a continuous comparison map 
\begin{eqnarray*}
\rho: \Spc(\cT) \too \Spec(\mathsf{End}_{\cT}({\bf 1})) && \cP \mapsto \{ f \in \mathsf{End}_{\cT}({\bf 1})\, |\, \mathrm{cone}(f) \notin \cP \} 
\end{eqnarray*}
towards the Zariski spectrum of the endomorphism ring of the $\otimes$-unit ${\bf 1}$ of $\cT$; see~\cite{Balmer-spectra}. As a consequence, the computation of the full spectrum $\Spc(\cT)$ can be reduced to the computation of the different fibers of $\rho$. 
\begin{theorem}\label{thm:main2}
Assume that $k$ is a finite field $\bbF_q$ or its algebraic closure $\overline{\bbF_q}$ (where $q=p^r$ with $p$ a prime number and~$r$ a positive integer). Then:
\begin{itemize}
\item[(i)] We have a continuous surjective map $\rho: \Spc(\Core^a_k) \twoheadrightarrow \Spec(\bbZ)$.
\item[(ii)] The fiber $\rho^{-1}(\{0\})$ at the prime ideal $\{0\} \subset \bbZ$ has a single point given by 
$$\{ M \in \Core^a_k\, |\, \overline{K}_\ast(M)_\bbQ=0\}\,,$$
where $\overline K_*(M)$ stands for the homotopy groups of the spectrum $\overline{K}(M)$.
%where $\overline K_*(M)_\bbQ= \overline K_*$ denotes rationalized $K$-theory.
\item[(iii)] The fiber $\rho^{-1}(p\bbZ)$ at the prime ideal $p\bbZ \subset \bbZ$ has a single point. Moreover, this point admits the following three different descriptions: 
$$
\begin{array}{ll}
(a) & \{ M \in \Core^a_k\,|\, \overline{HH}(M)=0\}\\
(b) & \{ M \in \Core^a_k\, |\, \overline{HP}(M)=0\}\\
(c) &  \{M\in \Core^a_k \mid \overline K_\ast(M)_{(p)}=0\}\,,
\end{array}
$$
where $\overline{K}_\ast(M)_{(p)}$ stands for the localization of $\overline K_*(M)$ at the prime~$p\bbZ$.
\end{itemize}
Moreover, the same results hold for $\Core_k^l$, with $K$ replaced by $\bbK$.
\end{theorem}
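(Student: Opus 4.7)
The plan is to derive Theorem~\ref{thm:main2} by combining Balmer's comparison map with suitable localizations of $\Core_k^a$ at the relevant primes of $\bbZ$. By the universal property of $U_k^a$, the $\otimes$-unit in $\Mot_k^a$ has endomorphism ring $K_0(k) = \bbZ$ for $k = \bbF_q$ or $\overline{\bbF_q}$, so Balmer's construction yields a continuous map $\rho: \Spc(\Core_k^a) \to \Spec(\bbZ)$. Surjectivity of $\rho$ will follow once I exhibit a prime over each point of $\Spec(\bbZ)$, which is achieved en route in the proofs of (ii) and (iii).

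For part (ii), the rationalization $\Core_k^a \to \Core_{k;\bbQ}^a$ is, by construction, the Verdier localization at $S_0 = \bbZ \setminus \{0\} \subset \mathsf{End}({\bf 1})$; hence, by \cite{Balmer-spectra}, $\Spc(\Core_{k;\bbQ}^a)$ identifies with the fiber $\rho^{-1}(\{0\})$. Theorem~\ref{thm:main1} (with $n=1$) gives an equivalence $\Core_{k;\bbQ}^a \simeq \cD^{\mathrm{perf}}(\bbQ)$, whose spectrum is a single point. Tracing the equivalence (which sends the $\otimes$-unit to $\bbQ$), this unique prime is exactly the kernel of $\overline{K}_\ast(-)_\bbQ$.

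For part (iii), the key step is a $(p)$-local analog of Theorem~\ref{thm:main1}: writing $\Core_{k;(p)}^a$ for the Verdier localization of $\Core_k^a$ at $S_p = \bbZ \setminus p\bbZ$, I claim $\Core_{k;(p)}^a \simeq \cD^{\mathrm{perf}}(\bbZ_{(p)})$ as tensor triangulated categories. The proof mirrors that of Theorem~\ref{thm:main1}: by Quillen, $K_{2i-1}(\bbF_q) = \bbZ/(q^i - 1)$ is of order coprime to $p$, so after $(p)$-localization all higher $K$-groups of $\bbF_q$ (and, by a colimit argument, of $\overline{\bbF_q}$) vanish. Hence $K_\ast(k)_{(p)} = \bbZ_{(p)}$ is concentrated in degree zero, and the fully-faithfulness / thick-subcategory argument underlying Theorem~\ref{thm:main1} produces the stated equivalence. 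Since $\Spc(\cD^{\mathrm{perf}}(\bbZ_{(p)})) = \Spec(\bbZ_{(p)})$ has precisely two points, the fiber $\rho^{-1}(p\bbZ) \subset \Spc(\Core_k^a)$ consists of a single point, given by the kernel of the composition $\Core_k^a \to \Core_{k;(p)}^a \simeq \cD^{\mathrm{perf}}(\bbZ_{(p)})$, i.e.\ description (c). For (a) and (b), each of $\overline{HH}$ and $\overline{HP}$ is a tensor triangulated functor into a category ($\cD(k)$, resp.\ $\cD_2(k)$) whose zero ideal is prime, so each kernel is a prime $\otimes$-ideal; a direct calculation on the generator (multiplication by $n$ on $\overline{HH}({\bf 1}) \cong k$ is an isomorphism iff $p \nmid n$, and similarly for $\overline{HP}$) places both over $p\bbZ$. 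By the uniqueness just established, all three primes coincide.

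The statement for $\Core_k^l$ is obtained verbatim: regularity of $\bbF_q$ and $\overline{\bbF_q}$ ensures $\bbK_\ast(k) = K_\ast(k)$, so the same $(p)$-localization argument applies with $\bbK$ in place of $K$, and Theorem~\ref{thm:main1} is available in its $\Core_k^l$ form. The main obstacle is the $(p)$-local equivalence $\Core_{k;(p)}^a \simeq \cD^{\mathrm{perf}}(\bbZ_{(p)})$; once this analog of Theorem~\ref{thm:main1} is in hand, (i), (ii), and (iii) all follow formally from Balmer's comparison map together with the vanishing/non-vanishing computations for $\overline{HH}$, $\overline{HP}$, and $\overline{K}$ on the generator.
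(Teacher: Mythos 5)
The central gap is in part (iii). You claim the $(p)$-local equivalence $\Core^a_{k;(p)}\simeq\cD^{\mathrm{perf}}(\bbZ_{(p)})$ follows because ``the fully-faithfulness / thick-subcategory argument underlying Theorem~\ref{thm:main1} produces the stated equivalence.'' But Theorem~\ref{thm:main1} rests on Theorem~\ref{thm:point}, whose hypothesis~(3) requires $F=\mathsf{End}(\mathbf1)$ to be a \emph{field}, and the proof uses that in an essential way: the step showing essential surjectivity observes that every morphism in the image of $-\otimes\mathbf1$ decomposes as a direct sum of a zero morphism and an isomorphism, a fact that holds over fields but fails over $\bbZ_{(p)}$ (multiplication by $p$ on $\bbZ_{(p)}$ is neither). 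So the proof does not ``mirror'' that of Theorem~\ref{thm:main1}; the direct equivalence argument simply breaks. The paper resolves this by instead proving the weaker statement it actually needs, namely that $\rho:\Spc((\Core^a_k)_{(p)})\to\Spec(\bbZ_{(p)})$ is a homeomorphism, via the much more elaborate Theorem~\ref{thm:hereditary_spectrum}: a K\"unneth and a universal-coefficient short exact sequence (Propositions~\ref{prop:KT} and~\ref{prop:UCT}) are developed by Meyer--Nest relative homological algebra, residue field objects $\kappa(\mathfrak p)$ are constructed and used to build a classifying support datum, and the spectrum is computed from this. No equivalence of categories with $\cD^{\mathrm{perf}}(\bbZ_{(p)})$ is asserted or needed; in a purely triangulated setting such an equivalence would require a rigidity/formality argument that your proposal does not supply.

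A second, smaller gap is in part (i). You say surjectivity of $\rho$ will follow by exhibiting a prime over each point of $\Spec(\bbZ)$, ``achieved en route in the proofs of (ii) and (iii).'' But those parts only produce primes over $\{0\}$ and over $p\bbZ$. For an arbitrary prime $\ell\neq p$, the higher $K$-groups $K_{2m-1}(\bbF_q)_{(\ell)}$ are generally nonzero, so your $(p)$-local vanishing argument does not apply, and you exhibit no prime over $\ell\bbZ$. The paper sidesteps this by invoking Balmer's connectivity criterion \cite[Thm.~7.13]{Balmer-spectra}, which gives surjectivity at once from the vanishing of $K_n(k)$ in negative degrees. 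Your treatment of (a) and (b) is essentially right in spirit (it matches Lemma~\ref{lemma:fiber_0} and the subsequent naturality computation), though you gloss over the point that $\overline{HP}$ only lands in $\cD_2(k)^c$ after restriction to the monogenic core, which the paper justifies via the lax monoidality of the $2$-perioditization functor in Corollary~\ref{corollary:strong_monoidal}.
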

Theorem~\ref{thm:main2} suggests that over a finite field (or over its algebraic closure) the spectrum of the monogenic cores is completely controlled by the Zariski spectrum of the integers. However, if we slightly enlarge the monogenic core $\Core^a_k$ and don't impose finiteness conditions on the base ring $k$, the above picture changes and some complexity starts to appear. Let $\ECore^a_{k}$ be the tensor triangulated subcategory of~$\Mot^a_{k}$ generated by the $\otimes$-unit object and by the non-commutative motive $U^a_{k}( \underline{k[t]})$ associated to the $k$-algebra of polynomials in the variable~$t$; see~\S\ref{sub:enlarging}.
\begin{theorem}\label{thm:main3}
Assume that $k$ is a field of characteristic zero. Then, 
\begin{eqnarray*}
\{M \in \ECore^a_{k}\,|\, \overline{HH}(M)=0\} &\mathrm{and}& \{M \in \ECore^a_{k}\, |\, \overline{HP}(M)=0\}
\end{eqnarray*}
are two distinct points in $\Spc(\ECore^a_{k})$.
\end{theorem}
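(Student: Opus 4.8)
The plan is to exhibit each of the two subsets as the kernel of a symmetric monoidal triangulated functor out of $\ECore^a_k$, and then to separate the two kernels by evaluating on the non-commutative motive of the polynomial algebra. Let $\iota\colon U^a_k(\underline{k})\to U^a_k(\underline{k[t]})$ be the morphism induced by the unital inclusion of $k$-algebras $k\hookrightarrow k[t]$, and set $M:=\mathrm{cone}(\iota)$; since $\iota$ is split by the augmentation $t\mapsto 0$, one has $U^a_k(\underline{k[t]})\simeq {\bf 1}\oplus M$, so that $M$ is the ``reduced'' polynomial motive, and in particular $M\in\ECore^a_k$ (it is built from the two generators by a triangulated operation). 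I would also record the easy general fact that the kernel of a symmetric monoidal triangulated functor $F\colon\cT\to\cS$ is a prime $\otimes$-ideal of $\cT$ whenever $\cS\neq 0$ and $a\otimes b\simeq 0$ in $\cS$ forces $a\simeq 0$ or $b\simeq 0$: thickness and the $\otimes$-ideal property are immediate, ${\bf 1}_\cT\notin\ker F$ because $F({\bf 1}_\cT)={\bf 1}_\cS\neq 0$, and the prime condition is exactly the displayed property of $\cS$ applied to $F(a)\otimes F(b)\simeq F(a\otimes b)$.

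Applying this to the functor $\ECore^a_k\hookrightarrow\Mot^a_k$ composed with $\overline{HH}\circ\Phi\colon\Mot^a_k\to\cD(k)$, respectively with $\overline{HP}\circ\Phi\colon\Mot^a_k\to\cD_2(k)$ --- all symmetric monoidal (see \S\ref{sec:HP}) --- shows that $\cP_{HH}:=\{N\in\ECore^a_k\mid\overline{HH}(N)=0\}$ and $\cP_{HP}:=\{N\in\ECore^a_k\mid\overline{HP}(N)=0\}$, which are precisely the two subsets in the statement, are points of $\Spc(\ECore^a_k)$. Here one uses that, $k$ being a field, $\cD(k)$ and $\cD_2(k)$ have no $\otimes$-zero-divisors, since $\Homol_\ast(X\otimes_k^{\Left}Y)\cong\Homol_\ast(X)\otimes_k\Homol_\ast(Y)$ vanishes iff one of the two factors does; note also $\overline{HH}({\bf 1})=HH(k)=k\neq 0$ and $\overline{HP}({\bf 1})=HP(k)\neq 0$.

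To see that these two points are distinct, I would compute both invariants on $M$. From the long exact sequence associated with $\mathrm{cone}(\iota)$ together with the Hochschild--Kostant--Rosenberg isomorphism one gets $\Homol_1(\overline{HH}(M))\cong\Omega^1_{k[t]/k}=k[t]\,dt\neq 0$, so $M\notin\cP_{HH}$. On the other hand $\overline{HP}(M)=\mathrm{cone}\big(HP(k)\to HP(k[t])\big)$, and here the hypothesis $\mathrm{char}(k)=0$ enters decisively: then $k[t]$ is smooth over $k$, periodic cyclic homology computes de Rham cohomology, and since $H^0_{dR}(\bbA^1_k/k)=k$ while $H^{\ge 1}_{dR}(\bbA^1_k/k)=0$ the structure map $HP(k)\to HP(k[t])$ is an equivalence; hence $\overline{HP}(M)=0$ and $M\in\cP_{HP}$. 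Therefore $\cP_{HP}\neq\cP_{HH}$ as subsets of $\ECore^a_k$, hence as points of $\Spc(\ECore^a_k)$.

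The genuinely substantive input is thus the vanishing $\overline{HP}(M)=0$ --- equivalently the $k[t]$-invariance of periodic cyclic homology in characteristic zero, which is exactly what breaks down when $\mathrm{char}(k)=p>0$, where $H^1_{dR}(\bbA^1_k/k)\neq 0$; this is consistent with Theorem~\ref{thm:main2}(iii), where over $\bbF_q$ the two kernels coincide. The step I expect to need the most care is verifying that $\overline{HP}$ is honestly symmetric monoidal with values in a tensor triangulated category without $\otimes$-zero-divisors: the completed tensor product $\grotimes$ occurring in the K\"unneth formula for periodic cyclic homology must be accounted for, although it causes no trouble here because $\overline{HP}(M)$ already vanishes.
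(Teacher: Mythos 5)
Your approach matches the paper's own proof almost step for step: identify the two sets as kernels of tensor triangulated functors, show each kernel is prime, and separate them by computing both invariants on the cone of $U^a_k(\underline{k})\to U^a_k(\underline{k[t]})$ (your $M$ is the paper's $\cO$; the extra observation that $\iota$ is split is nice but not needed). However, there is a genuine gap exactly where you flagged it, and your dismissal of the issue is incorrect. You invoke $\overline{HP}\colon\Mot^a_k\to\cD_2(k)$ as ``symmetric monoidal (see \S\ref{sec:HP})'', but \S\ref{sec:HP} says the opposite: the $2$-perioditization functor $\widehat{(-)}$ is only \emph{lax} monoidal, because it involves an infinite product that does not commute with $\otimes$. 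For a merely lax monoidal exact functor $F$, the kernel $\mathrm{Ker}(F)$ is thick but need not be a $\otimes$-ideal: if $F(a)=0$, the lax structure map only gives $0=F(a)\otimes F(b)\to F(a\otimes b)$, which says nothing about $F(a\otimes b)$. So the assertion that $\cP_{HP}$ is a prime $\otimes$-ideal is not yet justified, and the vanishing of $\overline{HP}(M)$ does not help here at all — that computation only becomes relevant \emph{after} one knows $\cP_{HP}$ is a point of $\Spc(\ECore^a_k)$.

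The missing input is the paper's Lemma~\ref{lem:strong_E_HP}, which is a second, independent place where $\mathrm{char}(k)=0$ enters. Over a field of characteristic zero the algebra $k[t]$ satisfies Kassel's Property~(P), so the lax structure map $\eta_{C(k[t]),\,S}$ of the $2$-perioditization is an isomorphism for every mixed complex $S$; by an inductive argument and Lemma~\ref{lem:gen2} it follows that $\eta_{X,Y}$ is an isomorphism for all $X,Y$ in the thick subcategory generated by the $C(k[t_1,\dots,t_n])$, i.e.\ the restriction $\overline{HP}\colon\ECore^a_k\to\cD_2(k)$ is honestly symmetric monoidal. Only then does your general lemma apply to make $\cP_{HP}$ a prime $\otimes$-ideal. (For $\cP_{HH}$ there is no such subtlety, since $\overline{HH}$ is symmetric monoidal on all of $\Mot^a_k$.) Once this is fixed, your computations of $\overline{HH}(M)\neq0$ via HKR and $\overline{HP}(M)=0$ via de Rham cohomology of $\bbA^1_k$ are correct and complete the argument.
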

The analogous tensor triangulated subcategory $\ECore_k^l$ of $\Mot_k^l$ is also well-defined. However, since by construction of $\Mot^l_k$ it is unclear if $U^l_k(\underline{k[t]})$ is a compact object, Balmer's theory does not apply as smoothly. On the other hand, making use of Balmer's general theory~\cite{Balmer-spectra}, we furthermore prove:
\begin{proposition}\label{prop:main}
Assume that $k$ is a field of characteristic zero. Then:
\begin{itemize}
\item[(i)] We have a continuous surjective map $\rho: \Spc(\ECore^a_k) \twoheadrightarrow \Spec(\bbZ)$.
\item[(ii)] The two distinct points described in Theorem~\ref{thm:main3} correspond to two distinct points in the fiber $\rho^{-1}(\{0\})$.
\end{itemize}
\end{proposition}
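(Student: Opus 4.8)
The plan is as follows. For part~(i), the first task is to identify $\mathsf{End}_{\ECore^a_k}(\mathbf{1})$. Since the $\otimes$-unit $\mathbf{1}=U^a_k(\uk)$ corepresents connective algebraic $K$-theory on $\Mot^a_k$, one gets a ring isomorphism $\mathsf{End}_{\ECore^a_k}(\mathbf{1})=\mathsf{End}_{\Mot^a_k}(U^a_k(\uk))\cong K_0(k)\cong\bbZ$, so Balmer's comparison map indeed has the asserted form $\rho\colon\Spc(\ECore^a_k)\to\Spec(\bbZ)$ and is continuous by \cite{Balmer-spectra}. To prove surjectivity I will produce a tensor triangulated functor out of $\ECore^a_k$ that detects every prime of~$\bbZ$. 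Namely, $\overline{K}$ refines to a lax symmetric monoidal functor to spectra, so it sends $\mathbf{1}$ to the connective ring spectrum $K(k)$ and takes values, on $\ECore^a_k$, in the perfect $K(k)$-modules. Since $k$ and $k[t]$ are regular, $\bbA^1$-invariance of algebraic $K$-theory gives $\overline{K}(U^a_k(\underline{k[t]}))\simeq K(k)$; and since $U^a_k(\underline{k[t]})\otimes U^a_k(\underline{k[t]})\simeq U^a_k(\underline{k[t_1,t_2]})$ with $k[t_1,t_2]$ again regular, a check on these generators shows that the comparison maps $\overline{K}(M)\wedge_{K(k)}\overline{K}(N)\to\overline{K}(M\otimes N)$ are equivalences, so the corestriction $\widetilde{K}\colon\ECore^a_k\to\mathrm{Perf}(K(k))$ is in fact \emph{strong} monoidal. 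Base-changing along the Postnikov truncation $K(k)\to H\bbZ$ then yields a tensor triangulated functor $\Theta:=(H\bbZ\wedge_{K(k)}-)\circ\widetilde{K}\colon\ECore^a_k\to\cD^{\mathrm{perf}}(\bbZ)$, which by the above identifications induces an isomorphism on $\mathsf{End}(\mathbf{1})\cong\bbZ$.

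Granting $\Theta$, surjectivity of $\rho$ is formal. Naturality of the comparison map yields $\rho_{\ECore^a_k}\circ\Spc(\Theta)=\Spec(\Theta_\ast)\circ\rho_{\cD^{\mathrm{perf}}(\bbZ)}$ with $\Spec(\Theta_\ast)$ an automorphism of $\Spec(\bbZ)$, while Balmer's computation exhibits $\rho_{\cD^{\mathrm{perf}}(\bbZ)}\colon\Spc(\cD^{\mathrm{perf}}(\bbZ))\isoto\Spec(\bbZ)$ as a homeomorphism; hence the image of $\rho_{\ECore^a_k}$ already exhausts $\Spec(\bbZ)$.

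For part~(ii), write $\cP_{HH}=\{M\in\ECore^a_k\mid\overline{HH}(M)=0\}$ and $\cP_{HP}=\{M\in\ECore^a_k\mid\overline{HP}(M)=0\}$ for the two prime $\otimes$-ideals provided by Theorem~\ref{thm:main3}; they are distinct by that theorem, so it only remains to show that both lie over the prime $\{0\}$. By definition $\rho(\cP_{HH})=\{n\in\bbZ\mid\overline{HH}(\mathrm{cone}(n\cdot\id_{\mathbf{1}}))\neq0\}$, and since $\overline{HH}$ is triangulated and additive one has $\overline{HH}(\mathrm{cone}(n\cdot\id_{\mathbf{1}}))\simeq\mathrm{cone}(n\cdot\id_{\overline{HH}(\mathbf{1})})$, where $\overline{HH}(\mathbf{1})=HH(k)\neq0$ is a complex of $k$-vector spaces. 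As $k$ has characteristic zero, every nonzero integer acts invertibly on it, so this cone vanishes for $n\neq0$ and equals $HH(k)\oplus\Sigma HH(k)\neq0$ for $n=0$; therefore $\rho(\cP_{HH})=\{0\}$. The identical computation with the nonzero complex of $k$-vector spaces $\overline{HP}(\mathbf{1})=HP(k)$ gives $\rho(\cP_{HP})=\{0\}$, so both points lie in the fiber $\rho^{-1}(\{0\})$, as claimed.

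The step I expect to be the main obstacle is the construction of $\Theta$ — more precisely, upgrading the merely lax monoidal $\overline{K}$ on $\Mot^a_k$ to a genuinely monoidal functor once one restricts to $\ECore^a_k$ and corestricts to perfect $K(k)$-modules. This is exactly where $\bbA^1$-invariance of algebraic $K$-theory over the regular rings $k$, $k[t]$ and $k[t_1,t_2]$ is used, and it is also why the finiteness hypotheses of Theorems~\ref{thm:main1}--\ref{thm:main2} are no longer needed here. Everything else — the corepresentability identification $\mathsf{End}(\mathbf{1})\cong K_0(k)\cong\bbZ$, the computation $\Spc(\cD^{\mathrm{perf}}(\bbZ))\cong\Spec(\bbZ)$, and the naturality of $\rho$ — is standard input from \cite{Balmer-spectra} and the foundational papers on non-commutative motives.
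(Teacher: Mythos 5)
Your part~(ii) is essentially the paper's own argument: both kernels are distinct by Theorem~\ref{thm:main3}, and for each one $\rho$ evaluates to the zero ideal because every nonzero integer acts invertibly on the nonzero complex $\overline{HH}(\mathbf 1)=HH(k)$ (resp.\ $\overline{HP}(\mathbf 1)=HP(k)$) of $k$-vector spaces in characteristic zero, so $\overline{HH}(\mathrm{cone}(n\cdot\id_{\mathbf 1}))$ and $\overline{HP}(\mathrm{cone}(n\cdot\id_{\mathbf 1}))$ vanish for $n\neq0$. The paper phrases this by noting that the induced ring map $\bbZ\simeq\mathsf{End}_{\ECore^a_k}(\mathbf1)\to k$ is injective, hence sends nonzero $f$ to units, but the computation is the same.

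For part~(i) your route is genuinely different from, and considerably heavier than, the paper's. The paper observes that $\Hom_{\ECore^a_k}(\Sigma^n\mathbf1,\mathbf1)\simeq K_n(k)$ vanishes for $n<0$ (connective $K$-theory), so the graded endomorphism ring of the unit is concentrated in non-negative degrees and Balmer's connectivity criterion \cite[Thm.~7.13]{Balmer-spectra} gives surjectivity of $\rho$ in one line. You instead build a tensor triangulated functor $\Theta\colon\ECore^a_k\to\cD^{\mathrm{perf}}(\bbZ)$ inducing the identity on $\mathsf{End}(\mathbf1)\cong\bbZ$, and deduce surjectivity via naturality of $\rho$ and $\Spc(\cD^{\mathrm{perf}}(\bbZ))\cong\Spec(\bbZ)$. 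Your strategy is logically sound and would actually prove more (it exhibits a continuous section of $\rho$), but you correctly flag the main obstacle: you are relying on $\overline K$ being lax symmetric monoidal on $\Mot^a_k$, landing in $K(k)$-modules, and becoming \emph{strong} monoidal after corestriction to perfect $K(k)$-modules on $\ECore^a_k$. The generator check via $\bbA^1$-invariance plus the paper's Lemma~\ref{lem:gen2} makes the lax-to-strong upgrade plausible once the lax structure is granted, and the Postnikov truncation $K(k)\to H\bbZ$ (using $K_0(k)=\bbZ$) is a ring map providing the base change; but none of this is needed, and the lax monoidality of $\overline K$ is itself a nontrivial input that goes beyond the multiplicative structure cited in \cite[Thm.~2.4]{Prod}. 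If you want to avoid committing to that machinery, replace the whole construction of $\Theta$ with the one-line appeal to Balmer's connectivity criterion, exactly as the paper does.
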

Proposition~\ref{prop:main} shows us that, in contrast with the monogenic core, the spectrum $\Spc(\ECore^a_k)$ is necessarily more complex than the Zariski spectrum of the integers.
\medbreak
\noindent\textbf{Acknowledgments:} The authors would like to thank Paul Balmer, Estanislao Herscovich, Bernhard Keller, and Ralf Meyer for useful conversations and the anonymous referee for his \Ivo{or her} comments and corrections which greatly improved the article. They are also very grateful to the department of mathematics of the University of G{\"o}ttingen for its generous hospitality.
%-----------------------------------------------------------------------
\section{Preliminaries}
%-----------------------------------------------------------------------

%-----------------------------------------------------------------------
\subsection{Triangular notations} \label{subsec:tr_notations}
%-----------------------------------------------------------------------
The suspension functor of any triangulated category $\cT$ will be denoted by~$\Sigma$. Whenever~$\cT$ admits arbitrary coproducts, we will write~$\cT^c$ for its full subcategory of compact objects; see \cite[Def.~4.2.7]{Neeman:book}. We will use the symbols~$\langle \cS\rangle$ and $\langle \cS\rangle_\mathrm{loc}$ to denote, respectively, the thick and localizing subcategories of~$\cT$ generated by a set $\cS$ of objects of~$\cT$; similarly in the case of a single object~$X$ of~$\cT$. Recall that a tensor triangulated category $(\cT, \otimes, {\bf 1})$ is a triangulated category~$\cT$ equipped with a symmetric monoidal structure, with $\otimes$-unit object~${\bf 1}$, in which the bifunctor $-\otimes-: \cT \times \cT \to \cT$ is exact in each variable. We will often write $\pi_i(-)$ for the functor  $ \Hom_{\cT}(\Sigma^i\mathbf 1, -)$ on~$\cT$ co-represented by the $i^\mathrm{th}$ suspension of~${\bf 1}$.
Finally, given an additive functor $F:\mathcal C\to \mathcal D$, we will denote by
 \[
 \mathrm{Ker}(F):=\{X\in \mathcal C \mid F(X)\simeq 0\}
 \quad \textrm{and} \quad
 \mathrm{Im}(F):=\{Y\in \mathcal D \mid \exists X\in \mathcal C, Y\simeq F(X) \}
\]
the full and replete (\ie closed under isomorphic objects) kernel and image of~$F$. 
\begin{remark}
In this article, we will prove several results concerning Balmer's spectrum $\Spc(\cT)$ of tensor triangulated categories~$\cT$. Recall from \cite[Construction~29]{Balmer-ICM} that $\Spc(\cT)$ always comes naturally equipped with a sheaf of commutative rings which turns it into a locally ringed space. At this point we would like to inform the reader that all our (iso)morphisms between Balmer's (or Zariski's) spectra are in fact (iso)morphisms of locally ringed spaces. These upgradings of our results are always trivial to check. Hence, in order to simplify the exposition, we have decided to omit them.
\end{remark}

%-----------------------------------------------------------------------
\subsection{Non-commutative motives}\label{subsec:NCM}
%-----------------------------------------------------------------------
Recall from \cite[\S10 and \S15]{Additive} and \cite[\S7]{CT1} the construction of the tensor triangulated categories\footnote{In \cite{Additive} the categories $\Mot^a_k$ and $\Mot^l_k$ were denoted respectively by $\cM^{\add}_\dg(e)$ and $\cM^{\loc}_\dg(e)$. The symmetric monoidal structure constructed in \cite{Additive} concerns $\Mot^l_k$. However, the arguments in the case of the category $\Mot^a_k$ are completely similar.} $\Mot^a_k$ and $\Mot_k^l$. Their $\otimes$-unit objects are respectively the non-commutative motives $U^a_k(\underline{k})$ and $U^l_k(\underline{k})$ associated to the dg category $\underline{k}$ with one object and with $k$ as the dg algebra of endomorphisms (concentrated in degree zero). The category $\Mot^a_k$ admits arbitrary coproducts and is compactly generated. A set of compact generators is given by (the shifts of) the non-commutative motives $U^a_k(\cA)$ associated to the {\em homotopically finitely presented} dg categories $\cA$; see \cite[\S3]{CT1}. Heuristically, this notion is the homotopical version of the classical notion of finite presentation. The category $\Mot_k^l$ also admits arbitrary coproducts and the non-commutative motives $U_k^l(\cA)$, with $\cA$ homotopically finitely presented, also form a set of generators. However, in contrast with $\Mot_k^a$, it is not known if all these objects are compact. The case of the $\otimes$-unit $U_k^l(\underline{k})$ is proved in \cite[Thm.~7.16]{CT}. Finally, Quillen's algebraic $K$-theory and non-connective algebraic $K$-theory can be recovered respectively out of $\Mot^a_k$ and $\Mot^l_k$ as the functors co-represented by the shifts of the $\otimes$-unit; see \cite[Thm.~15.10]{Additive} and \cite[Thm.~7.1]{CT}. More precisely, we have the following computations
\begin{equation*}
\Hom_{\Mot^a_k}(\Sigma^nU^a_k(\underline{k}), U^a_k(\mathcal A)) \simeq  \left\{ \begin{array}{ll} K_n(\mathcal A) &  n\geq 0 \\ 0 & n <0  \end{array} \right.
\end{equation*}
\begin{equation*}
\Hom_{\Mot^l_k}(\Sigma^nU^l_k(\underline{k}), U^l_k(\mathcal A)) \simeq  \bbK_n(\cA) \qquad n \in \bbZ 
\end{equation*}
for every small dg category~$\mathcal A$. In particular, we have
\begin{equation}\label{eq:K-th-comp}
\Hom_{\Mot^a_k}(\Sigma^nU^a_k(\underline{k}), U^a_k(\underline{k})) \simeq  \left\{ \begin{array}{ll} K_n(k) &  n\geq 0 \\ 0 & n <0 \,, \end{array} \right.
\end{equation}
\begin{equation}\label{eq:nonK-th-comp}
\Hom_{\Mot_k}(\Sigma^nU^l_k(\underline{k}), U^l_k(\underline{k})) \simeq \bbK_n(k) \qquad n \in \bbZ\,,
\end{equation}
where $K_n(k)$ (respectively $\bbK_n(k)$) is the $n^\mathrm{th}$ algebraic (non-connective) $K$-theory group of the base ring~$k$. Recall that $\bbK_n(k)=K_n(k)$ for $n \geq 0$. Moreover, under the isomorphisms \eqref{eq:K-th-comp} and \eqref{eq:nonK-th-comp}, composition in the triangulated categories~$\Mot^a_k$ and $\Mot^l_k$ corresponds to multiplication in the $\bbZ$-graded rings~$K_\ast(k)$ and $\bbK_\ast(k)$; see \cite[Thm.~2.4]{Prod}.

%-----------------------------------------------------------------------
\subsection{Rationalization}\label{sub:ratcoef}
%-----------------------------------------------------------------------

\begin{notation}
Until the end of this subsection both categories $\Mot^a_k$ and $\Mot^l_k$ will be denoted by $\Mot_k$, both non-commutative motives $U_k^a(\underline{k})$ and $U^l_k(\underline{k})$ will be denoted by~${\bf 1}$, and both monogenic cores $\Core_k^a:=\langle \mathbf 1 \rangle\subset \Mot_k^a$ and $\Core_k^l :=\langle \mathbf 1 \rangle\subset \Mot_k^l$ will be denoted by $\Core_k$.
\end{notation}
Associated to every $\bbZ$-linear category~$\mathcal C$, there is an evident rationalized version~$\mathcal C_{\bbQ}$ with the same objects and where each Hom-group is tensored with~$\bbQ$. In this subsection we will study the rationalization $\Core_{k;\bbQ}:=(\Core_k)_\bbQ$ of the monogenic core and of some associated categories.
\begin{lemma}
\label{lemma:boot_tensor_exact}
The category $\Core_{k;\bbQ}$ inherits from $\Core_k$ a unique tensor triangulated structure such that the canonical functor $\Core_k\to \Core_{k;\bbQ}$, $f\mapsto f\otimes 1$, becomes $\otimes$-exact. 
Furthermore, this canonical functor can be identified with the Verdier quotient of $\Core_k$ by its thick $\otimes$-ideal 
$\langle \mathrm{cone}(n\cdot \id_{\bf 1}) \mid n\in \bbZ \smallsetminus \{0\} \rangle_\otimes$.
\end{lemma}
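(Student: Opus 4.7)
The plan is to identify $\Core_{k;\bbQ}$ with the Verdier quotient $Q := \Core_k/\cJ$, where $\cJ := \langle \mathrm{cone}(n\cdot \id_{\bf 1}) \mid n \in \bbZ\smallsetminus\{0\}\rangle_\otimes$ is the thick $\otimes$-ideal appearing in the statement. Since $\cJ$ is a thick $\otimes$-ideal, Balmer's general formalism endows $Q$ with a canonical tensor triangulated structure for which the quotient functor $q : \Core_k \to Q$ is $\otimes$-exact. The task then splits into producing a comparison functor $\bar q: \Core_{k;\bbQ} \to Q$ and showing it is an equivalence of tensor triangulated categories.

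First I would observe that for each $n \in \bbZ\smallsetminus\{0\}$ the morphism $n\cdot\id_{\bf 1}$ has cone in $\cJ$ and so becomes invertible in $Q$. Since $n \cdot \id_X = (n \cdot \id_{\bf 1}) \otimes \id_X$ and $q$ is $\otimes$-exact, multiplication by $n$ is invertible on every object of $Q$; in particular, all Hom-groups of $Q$ are $\bbQ$-vector spaces. The universal property of the rationalization then yields a unique $\bbQ$-linear factorization $q = \bar q \circ (-\otimes 1)$ through some $\bar q : \Core_{k;\bbQ} \to Q$. Since both categories share the objects of $\Core_k$, essential surjectivity is automatic, and everything reduces to showing that $\bar q$ is fully faithful.

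The key intermediate result is the ``torsion'' description
\[
\cJ = \{X \in \Core_k \mid n \cdot \id_X = 0 \text{ for some } n \in \bbZ\smallsetminus\{0\}\}.
\]
The inclusion $\supseteq$ is clear: if $n\cdot\id_X = 0$ then $X$ is a retract of $\mathrm{cone}(n\cdot\id_X) = \mathrm{cone}(n\cdot\id_{\bf 1})\otimes X \in \cJ$. For $\subseteq$ I would check that the right-hand side is a thick $\otimes$-ideal containing the generators: the generators $\mathrm{cone}(n\cdot\id_{\bf 1})$ are annihilated by $n^2$ (a standard diagram chase using the defining triangle and the fact that $p\circ n = 0$, $n\circ q = 0$, where $p, q$ are the structural maps), closure under extensions follows from the classical observation that if the outer terms of a distinguished triangle are killed by $m$ and $n$ respectively then $mn$ kills the middle term, and closure under $\otimes$ is immediate.

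Equipped with this, I would invoke the calculus of fractions for Verdier quotients: any morphism $X \to Y$ in $Q$ is represented by a roof $X \xleftarrow{s} Z \xrightarrow{g} Y$ with $\mathrm{cone}(s) \in \cJ$. Picking $n$ annihilating $\mathrm{cone}(s)$, the identity $(n\cdot\id_{\mathrm{cone}(s)}) \circ p = 0$ (where $p : X \to \mathrm{cone}(s)$ is the next map in the triangle) forces $n\cdot\id_X$ to factor through $s$, say $n\cdot\id_X = s\circ t$; hence the roof is equivalent to $(g\circ t)/n$, proving that $\Hom_{\Core_k}(X,Y)\otimes\bbQ \to \Hom_Q(X,Y)$ is surjective. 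Injectivity follows from the standard fact that a morphism vanishes in the Verdier quotient iff it factors through $\cJ$, combined with the torsion description: such an $f$ satisfies $nf = 0$ for some $n$, which is precisely what becomes zero after tensoring with $\bbQ$. Transporting the tensor triangulated structure from $Q$ to $\Core_{k;\bbQ}$ along $\bar q$ gives existence of the required structure, and uniqueness follows from the universal property of Verdier quotients applied to the kernel of the canonical functor, which must coincide with $\cJ$. The main technical obstacle I expect is the torsion characterization of $\cJ$, particularly the diagram chase showing closure under extensions.
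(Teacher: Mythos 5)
Your proof is correct, and its final conclusion coincides with the paper's: both identify $\Core_{k;\bbQ}$ with the Verdier quotient of $\Core_k$ by the thick $\otimes$-ideal $\cJ$ generated by the cones of the integers. The difference is one of outsourcing. The paper treats $(\bbZ\smallsetminus\{0\})\cdot\id_{\bf 1}$ as a central multiplicative subset of $\mathsf{End}_{\Core_k}({\bf 1})$, notes that $\mathrm{cone}(n\cdot\id_M)=\mathrm{cone}(n\cdot\id_{\bf 1})\otimes M$, and then simply cites Balmer's central localization theorem (\cite[Thm.~3.6]{Balmer-spectra}), which packages precisely the statements you prove by hand: that inverting a central multiplicative set yields the Verdier quotient by the $\otimes$-ideal generated by the corresponding cones, and that this quotient inherits a tensor triangulated structure. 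You instead give a self-contained proof of this special instance: your torsion characterization $\cJ=\{X\mid n\cdot\id_X=0\text{ for some }n\neq0\}$ (with $n^2$ annihilating $\mathrm{cone}(n\cdot\id_{\bf 1})$, $mn$ killing an extension, and the $\otimes$-ideal closure) together with the roof manipulation replaying the essential argument inside the calculus of fractions. What the paper's route buys is brevity and the observation that the same mechanism underlies all the later central localizations $L_{\mathfrak p}$ of \S\ref{sub:ratcoef}; what your route buys is transparency and independence from the reference. Both are correct. One small remark on your final uniqueness paragraph: the cleaner way to phrase it is that for any tensor triangulated structure on $\Core_{k;\bbQ}$ making the canonical functor $\otimes$-exact, the kernel of that functor is already determined at the additive level to be the torsion objects (equivalently $\cJ$), so the universal property of the Verdier quotient forces a unique comparison equivalence with $\Core_k/\cJ$, along which the structure is transported.
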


\begin{proof}
Given any $\bbZ$-linear category~$\mathcal C$, the category $\mathcal C_\bbQ$ is canonically isomorphic to the localization $S^{-1}\mathcal C$ of $\cC$ at the set $S:=\{n\cdot \id_X \mid  n\in \bbZ\smallsetminus \{0\} , X\in \mathcal C\}$.
Since $\Core_k$ is a ($\bbZ$-linear) tensor triangulated category with unit ${\bf 1}$, $\mathrm{cone}(n\cdot \id_M)=\mathrm{cone}(n \cdot \id_{\bf 1})\otimes M$ 
 for every non-commutative motive $M$, and so it suffices to invert the (``central'') multiplicative system $(\bbZ\smallsetminus \{0\})\cdot \id$ in the endomorphism ring of ${\bf 1}$. Hence, the result follows now from the general theory of central localization; see \cite[Thm.~3.6]{Balmer-spectra}.
\end{proof}

\begin{remark}
\label{remark:coprods_1}
The ``ambient'' category $\Mot_k$ has arbitrary coproducts, but we cannot expect them to be preserved by the canonical functor $\Mot_k \to \Mot_{k; \bbQ}$. 
In fact, as explained in Lemma~\ref{lemma:boot_tensor_exact}, $\Mot_{k; \bbQ}$ can be identified with the Verdier quotient of $\Mot_{k}$ by its thick $\otimes$-ideal
  $\mathcal J:=\langle \mathrm{cone}(n \cdot \id_{\bf 1})\mid n\in \bbZ\smallsetminus\{0\} \rangle_{\otimes}$. 
Therefore, since the ideal $\mathcal J$ is not necessarily localizing, the quotient functor does not necessarily preserves arbitrary coproducts.
%It may be that the rationalization $\Mot_{k; \bbQ}$ has arbitrary coproducts nontheless, but they would not be compatible with those of $\Mot_{k}$.(\emph{Cf.}\ Remark \ref{remark:coprods_2}.)
\end{remark}

In view of the above remark, we will perform a slightly more refined version of the rationalization which, while extending that of the monogenic core, is also compatible  with arbitrary coproducts. We will consider then the following auxiliary category:

\begin{definition}
The \emph{localizing monogenic core} $\LCore_k:= \langle {\bf 1} \rangle_{\mathrm{loc}}\subset \Mot_k$ is the localizing triangulated subcategory of $\Mot_k$ generated by the $\otimes$-unit object~${\bf 1}$.
\end{definition}

\begin{lemma}
The tensor triangulated category $\LCore_k$ is compactly generated (the shifts of the $\otimes$-unit are the compact generators). Its full triangulated $\otimes$-subcategory of compact objects is precisely the monogenic core $\Core_k$. Moreover, the tensor product is exact in each variable.
\end{lemma}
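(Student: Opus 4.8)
The plan is to deduce the three assertions from the general theory of compactly generated triangulated categories, applied to the single compact object $\mathbf 1$. The key preliminary point is that $\mathbf 1$ is a compact object of the ambient category $\Mot_k$: in the additive case this is immediate from the description of the compact generators of $\Mot^a_k$ recalled in \S\ref{subsec:NCM}, since $\underline{k}$ is (trivially) homotopically finitely presented; in the localizing case it is instead the non-formal input \cite[Thm.~7.16]{CT}. Hence $\cS:=\{\Sigma^n\mathbf 1\mid n\in\bbZ\}$ is a set of compact objects of $\Mot_k$ and, by construction, $\LCore_k=\langle\mathbf 1\rangle_{\mathrm{loc}}=\langle\cS\rangle_{\mathrm{loc}}$.

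Next I would invoke Neeman's theorem on localizing subcategories generated by a set of compact objects (see \cite[\S4]{Neeman:book}): in any triangulated category with arbitrary coproducts, the localizing subcategory generated by a set $\cS$ of compact objects is compactly generated by $\cS$, and its full subcategory of compact objects is the thick subcategory $\langle\cS\rangle$ generated by $\cS$ in the ambient category. It is worth stressing that this uses only the existence of coproducts, not that the ambient category itself be compactly generated --- which is relevant here, since $\Mot^l_k$ is not known to be compactly generated. Applied to $\cS$, this yields at once that $\LCore_k$ is compactly generated with the shifts of $\mathbf 1$ as compact generators and that $(\LCore_k)^c=\langle\mathbf 1\rangle$; and since the thick subcategory generated by $\mathbf 1$ is manifestly the same whether formed in $\Mot_k$ or in $\LCore_k$, it coincides with the monogenic core $\Core_k$.

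Finally, to upgrade $\LCore_k$ to a tensor triangulated subcategory of $\Mot_k$ I would use that $-\otimes-$ is exact in each variable and, being part of the closed symmetric monoidal structure of \cite{CT1}, preserves arbitrary coproducts in each variable. Fixing $X\in\LCore_k$, the full subcategory $\{Y\in\Mot_k\mid X\otimes Y\in\LCore_k\}$ is localizing (the functor $X\otimes-$ is exact and coproduct-preserving and $\LCore_k$ is localizing) and contains $\mathbf 1$ (since $X\otimes\mathbf 1\simeq X\in\LCore_k$), hence it contains all of $\LCore_k=\langle\mathbf 1\rangle_{\mathrm{loc}}$; letting $X$ range over $\LCore_k$ gives $\LCore_k\otimes\LCore_k\subseteq\LCore_k$, so $(\LCore_k,\otimes,\mathbf 1)$ is tensor triangulated with $\otimes$ exact in each variable by restriction, and the same argument with ``thick'' in place of ``localizing'' shows in addition that $\Core_k$ is a tensor triangulated subcategory of $\LCore_k$. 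No step here is genuinely difficult; the only points demanding care are that compactness of $\mathbf 1$ in the localizing setting is not formal but rests on \cite[Thm.~7.16]{CT}, and that one must apply a form of Neeman's theorem valid over a merely cocomplete triangulated category rather than over a compactly generated one.
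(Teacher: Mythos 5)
Your proof is correct and follows essentially the same route as the paper: compactness of $\mathbf 1$ in $\Mot_k$ (immediate for $\Mot^a_k$, from \cite[Thm.~7.16]{CT} for $\Mot^l_k$), Neeman's theorem on localizing subcategories generated by a set of compact objects (the paper cites \cite[Lemma~2.2]{Neeman:localization} where you cite \cite{Neeman:book}, but it is the same result), and the localizing-subcategory argument for stability of $\LCore_k$ under $\otimes$. Your explicit remark that Neeman's theorem requires only a cocomplete ambient category, not a compactly generated one, is a worthwhile clarification, since $\Mot^l_k$ is not known to be compactly generated --- a point the paper's terse ``by construction, it is compactly generated'' leaves implicit.
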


%Here we define $\Core_k$ to be the \emph{thick} subcategory generated by the tensor unit (or else, $\LCore_k^c$ is the idempotent completion of $\Core_k$; as you said, the spectrum doesn't care). 

\begin{proof}
Consider the full subcategory $\mathcal S:= \{M \mid M\otimes \LCore_k \subset \LCore_k  \}$ of $\Mot_k$.
Since~$\LCore_k$ is localizing and the bifunctor $-\otimes -$  is exact and commutes with arbitrary coproducts in each variable, we conclude that $\mathcal S$ is also a localizing subcategory of $\Mot_k$. Moreover, $S$ contains the $\otimes$-unit which implies that $\LCore_k\subset \mathcal S$. 
Thus, $\LCore_k$ is a tensor triangulated subcategory. By construction, it is compactly generated by the shifts of the $\otimes$-unit. Lemma~\cite[Lemma~2.2]{Neeman:localization} (applied to $R=\{\Sigma^n {\bf 1}\mid n\in\bbZ\}$) implies that its compact objects are precisely those of the thick subcategory $\langle {\bf 1} \rangle= \Core_k$. Finally, the property that the tensor product is exact in each variable is inherited from the ambient category~$\Mot_k$.
\end{proof}

For simplicity we now write $\pi_*=\Hom_{\LCore_k}(\Sigma^*\mathbf1,-)$ (\emph{cf.\ }\S\ref{subsec:tr_notations}), where $\pi_*$ is considered as a functor $\LCore_k\to \mathrm{End}_{\LCore_k}(\mathbf1)\textrm-\Mod^\bbZ$ taking values in $\bbZ$-graded $\mathrm{End}_{\LCore_k}(\mathbf1)$-modules.
Let $S$ be a multiplicative subset of $\mathsf{End}_{\LCore_k}(\mathbf{1})$, \ie $\id_{\mathsf{1}} \in S$ and $S\cdot S\subset S$. In the following general proposition we collect the properties of the Bousfield localization of~$\LCore_k$ with respect to the homological functor
$S^{-1}\pi_*(-) = \pi_*(-)\otimes_{\mathsf{End}_{\LCore_k}(\mathbf1)} S^{-1}\mathsf{End}_{\LCore_k}(\mathbf1)$.

\begin{proposition}
\label{prop:rationalization}
For every object $M$ in the localizing monogenic core $\LCore_k$, we have a functorial distinguished triangle 
\[
\label{functorial_triangle}
\xymatrix{
\mathit{\Gamma}_S M \ar[r] &
 {M_{\phantom{S}} \!\!\!} \ar[r]^-{\eta_X} &
  L_S M \ar[r] &
    \Sigma \mathit{\Gamma}_S M
}
\]
with $S^{-1}\pi_* (L_S M) \simeq \pi_* (L_S M) $
 and $S^{-1}\pi_*(\mathit{\Gamma_S }M) =0$.
Moreover, the following properties hold:
\begin{enumerate}
\item $\mathrm{Ker}(L_S)= \{M \in \LCore_k \mid S^{-1}\pi_*(M) =0 \} = \mathrm{Im}(\mathit{\Gamma}_S)$.
\item $L_S \LCore_k
 = \mathrm{Im}(L_S)= \{M\in \LCore_k\mid S^{-1}\pi_*(M) \simeq \pi_*(M) \}
 =\mathrm{Ker}(\mathit{\Gamma}_S)$.
\item $\eta_M$ is initial among maps from $M$ to an object in $L_S\LCore_k$.
\item $\eta_M$ is final among maps from $M$ which induce an isomorphism on $S^{-1}\pi_*(-)$.
\item The above functorial triangle can be obtained, up to unique canonical isomorphism, by tensoring $M$ with the triangle $\mathit{\Gamma}_S \mathbf 1 \to \mathbf 1 \to L_S \mathbf 1\to \Sigma \mathit{\Gamma}_S \mathbf 1$. 
 \item $\eta{L_S}= L_S \eta: L_S \simeq L_S\circ L_S$; 
 in particular $L_S \mathbf 1 \otimes L_S \mathbf 1 \simeq L_S \mathbf 1$.
 \item The functor $L_S: \LCore_k \to L_S\LCore_k$ realizes the localization of $\LCore_k$ with respect to the class of $S^{-1}\pi_*(-)$-isomorphisms.
 \item Given any $M\in \Core_{k}$ and $N\in \LCore_k$, the map $\eta_N:N\to L_S N$ induces an isomorphism
 $S^{-1}\Hom_{\LCore_k}(M,N) \simeq \Hom_{\LCore_k}(M, L_S N)$.
\end{enumerate}
The category $L_S \LCore_k$ is tensor triangulated and generated by its tensor unit $L_S\mathbf 1$. The functor $L_S= L_S\mathbf 1\otimes-$ restricts to a coproduct-preserving $\otimes$-exact functor $\LCore_k \to L_S \LCore_k$, which is left adjoint to the inclusion $L_S \LCore_k \hookrightarrow \LCore_k$. Its restriction to compact objects can be expressed as the composition of the $\otimes$-exact functor $\Core_k \to S^{-1}\Core_{k}$ provided by central localization with the cofinal fully-faithful functor $S^{-1}\Core_{k} \hookrightarrow (L_S \LCore_k)^c$.
\end{proposition}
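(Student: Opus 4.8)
The plan is to set up a Bousfield localization on the compactly generated category $\LCore_k$ and then read off the assertions in the list and in the final paragraph as its formal consequences. First I would recall that $\LCore_k$ is compactly generated by the shifts of $\mathbf 1$, so $\pi_*=\Hom_{\LCore_k}(\Sigma^*\mathbf1,-)$ is a conservative homological functor detecting objects. The homological localization functor associated to the exact functor $S^{-1}\pi_*(-)$ on $\bbZ$-graded $\mathsf{End}_{\LCore_k}(\mathbf1)$-modules exists by the usual Bousfield machinery for compactly generated triangulated categories (e.g.\ via Neeman's localization theorem, or Brown representability): one localizes $\LCore_k$ at the class of maps $f$ with $S^{-1}\pi_*(f)$ an isomorphism. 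This yields the colocalization/localization pair $(\mathit\Gamma_S, L_S)$ fitting into a functorial triangle $\mathit\Gamma_S M \to M \to L_S M \to \Sigma\mathit\Gamma_SM$, with $\mathit\Gamma_SM$ in the kernel $\mathrm{Ker}(S^{-1}\pi_*)$ and $L_SM$ in the corresponding local subcategory. Properties (1)--(4) are then the standard bookkeeping of a smashing Bousfield localization: (1) and (2) identify kernels and images, (3) is the universal property of $\eta_M$ as a localization map, and (4) its dual for the colocalization, all immediate from the triangle and the adjunctions.

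The tensor-structural assertions require slightly more care. For (5) I would argue that $\LCore_k$ is generated by the $\otimes$-unit, the subcategories $\mathrm{Ker}(S^{-1}\pi_*)$ and $L_S\LCore_k$ are $\otimes$-ideals (because $S^{-1}\pi_*$ factors through a central localization of the graded endomorphism ring, so $S^{-1}\pi_*(X\otimes Y)$ vanishes as soon as $S^{-1}\pi_*(X)$ does — using that $\mathbf1$ generates and $\otimes$ commutes with coproducts and triangles), hence $\mathit\Gamma_S(\mathbf1)\otimes-$ and $L_S(\mathbf1)\otimes-$ land in the right subcategories and, by the universal property (3), must agree with $\mathit\Gamma_S$ and $L_S$ up to canonical iso; this also gives $L_S=L_S\mathbf1\otimes-$. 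Then (6) is the idempotence of a Bousfield localization, $L_S\mathbf1\otimes L_S\mathbf1\simeq L_S\mathbf1$ being the special case. Point (7) is just the restatement that $L_S$ is the localization at $S^{-1}\pi_*(-)$-isomorphisms. For (8) I would combine the finiteness of $M\in\Core_k$ (compactness) with the triangle: $\Hom_{\LCore_k}(M,\mathit\Gamma_SN)$ is the $S$-torsion part, so applying $\Hom_{\LCore_k}(M,-)$ to $\mathit\Gamma_SN\to N\to L_SN$ and using that $M$ is compact (so Hom commutes with the colimit presenting $L_S$ as a localization, or directly that $S^{-1}\Hom(M,-)$ is exact and kills $\Hom(M,\mathit\Gamma_SN)$) yields $S^{-1}\Hom_{\LCore_k}(M,N)\simeq\Hom_{\LCore_k}(M,L_SN)$.

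For the final paragraph, $L_S\LCore_k$ is tensor triangulated with unit $L_S\mathbf1$ by (5)--(6), and it is generated by $L_S\mathbf1$ because $\LCore_k$ is generated by $\mathbf1$ and $L_S$ is a coproduct-preserving exact functor onto $L_S\LCore_k$ (coproduct preservation follows since $\mathit\Gamma_S$, being given by $\mathit\Gamma_S\mathbf1\otimes-$, preserves coproducts). The adjunction $L_S\dashv(\text{inclusion})$ is the defining property of the Bousfield localization. For the description of $L_S$ on compact objects: the composite $\Core_k\to S^{-1}\Core_k$ is the central localization of Lemma~\ref{lemma:boot_tensor_exact} (inverting $S\subset\mathsf{End}(\mathbf1)$), and by (8) the functor $S^{-1}\Core_k\to(L_S\LCore_k)^c$ is fully faithful; it is cofinal because every compact object of $L_S\LCore_k$ is, up to retract, built from $L_S\mathbf1=L_S$ applied to $\mathbf1$, hence lies in the thick closure of the image of $\Core_k$. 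I expect the main obstacle to be the verification that the localization behaves well on compact objects — i.e.\ showing that $(L_S\LCore_k)^c$ is the thick subcategory generated by $L_S\mathbf1$ and that $S^{-1}\Core_k\to(L_S\LCore_k)^c$ is cofinal and fully faithful — since this mixes the central localization of a $\otimes$-triangulated category with the general (non-central) Bousfield localization and must be reconciled with the Verdier-quotient description; the rest is a careful but routine assembly of standard Bousfield-localization facts.
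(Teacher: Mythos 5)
Your proposal and the paper's proof run in parallel but allocate effort very differently. The paper outsources the existence of the localization triangle and properties (2)--(6) and (8) to a single citation, namely \cite[Thm.~2.33]{Ivo}, whose ``compact~$=$~rigid'' hypothesis it checks is automatic here; it then supplies only three things by hand: the equality in~(1), property~(7), and the compact-objects factorization, the latter via Neeman's localization theorem. You instead try to rebuild the whole Bousfield localization from scratch, and that is where the imprecisions accumulate.

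Three specific issues. First, you start by asserting that ``one localizes $\LCore_k$ at the class of maps $f$ with $S^{-1}\pi_*(f)$ an isomorphism'' as though its existence were standard for compactly generated categories. Existence of a Bousfield localization attached to an arbitrary homological functor is not free; the safe route (which is also what \cite{Ivo} and the paper implicitly use) is to define $L_S$ as the finite localization at the localizing subcategory $\mathcal J:=\langle \mathrm{cone}(s) : s\in S\rangle_{\mathrm{loc}}$, which is generated by compacts, and only afterwards identify $\mathcal J$ with $\{M : S^{-1}\pi_*(M)=0\}$. But that identification is precisely the nonobvious equality in~(1), and you dismiss (1)--(4) as ``immediate.'' The paper in fact proves this equality is equivalent to~(7), and then establishes both using~(8) together with the conservativity of $\pi_*$; this argument is not just bookkeeping and should appear in your write-up.

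Second, your justification of~(5) claims that both $\mathrm{Ker}(S^{-1}\pi_*)$ and $L_S\LCore_k$ are $\otimes$-ideals. The first is true (for fixed acyclic $X$, $\{Y : X\otimes Y \textrm{ acyclic}\}$ is localizing and contains $\mathbf1$), but the second is not a $\otimes$-ideal in general: the argument $\Hom(J,M\otimes N)\cong \Hom(J\otimes N^\vee,M)$ needs $N$ dualizable, and most objects of $\LCore_k$ are not. What you actually need — that $L_S\mathbf1\otimes M$ is local for every $M$ — follows differently: the class $\{M : L_S\mathbf1\otimes M \textrm{ is local}\}$ is a localizing subcategory (local objects are closed under coproducts because the localization is smashing, and $L_S\mathbf1\otimes-$ is exact and preserves coproducts) containing $\mathbf1$, hence it is all of $\LCore_k$. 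Third, for the compact-objects factorization you rely on (8) plus ``cofinality by inspection,'' whereas the paper simply invokes \cite[Thm.~2.1]{Neeman:localization}, which packages both the full faithfulness on compacts and the cofinality at once; your version is morally the same but you should either cite Neeman's theorem directly or supply the thick-closure argument carefully. With these repairs, your proposal becomes a correct self-contained re-derivation of the special case of \cite[Thm.~2.33]{Ivo} that the paper cites, which is a legitimate but longer route.
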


\begin{proof}
The existence of the functorial distinguished triangle as well as properties (2)-(6) and (8) can be obtained by applying \cite[Thm.~2.33]{Ivo} to the compactly generated tensor triangulated category~$\mathcal T:=\LCore_k$.  Note that the hypothesis in \emph{loc.\ cit.}\ concerning the agreement between compact and rigid objects are automatically satisfied in $\LCore_k$ since this category is compactly generated by the $\otimes$-unit. 

%The existence and properties of the functorial triangle, as well as most properties of $\mathrm{Ker}(L_{S})$ and $L_S\mathcal T_k$, are immediate from \emph{loc.\ cit.}; this covers in particular points (2)--(6) and~(8), and also the preamble of the proposition and most of the moreover part.
It remains then to verify property (7), the equality 
\begin{equation}
\label{equality_Ker}
\mathrm{Ker}(L_S)= \{M \in \LCore_k \mid S^{-1}\pi_*(M) =0 \}
\end{equation}
 in~(1), and the 
fact that the functor $L_S: \Core_k=(\LCore_k)^c\to (L_S\LCore_k)^c$ admits the claimed factorization. The latter result follows from Neeman's localization theorem~\cite[Thm.~2.1]{Neeman:localization}. 
The remaining two results are actually equivalent. Thanks to Neeman's localization theorem, the functor $L_S: \LCore_k \to L_S\LCore_k$ is the Verdier quotient of $\LCore_k$ by $\mathrm{Ker}(L_{S})$. Hence, $L_S\LCore_k$ is the localization of $\LCore_k$ at the class of maps~$f$ such that $\mathrm{cone}(f)\in \mathrm{Ker}(L_{S})$. Since property~(7) asserts that such maps coincide with those inverted by the stable homological functor $S^{-1}\pi_*(-)$, we conclude that the equality \eqref{equality_Ker} is equivalent to property~(7). Let us then prove equality \eqref{equality_Ker}. The inclusion ``$\subset$'' follows from the fact that $\pi_*(\mathit{\Gamma}_S M)=0$ for all $M$, and from equality $\mathrm{Ker}(L_S)=\mathrm{Im}(\mathit{\Gamma}_S)$. 
On the other hand, by property~(8) we have 
%by property (4) and the preamble we have  $S^{-1}\pi_*(M) \simeq S^{-1}\pi_*(L_S M) \simeq \pi_*(L_S M)$.
 $S^{-1}\pi_*(M)  \simeq \pi_*(L_S M)$.
 Thus, if $S^{-1}\pi_*(M)=0$, then $\pi_*(L_S M)=0$. Since $\mathbf1$ generates $\LCore_k$ this latter condition is equivalent to the condition $L_S M=0$. This shows the inclusion ``$\supset$''.
\end{proof}

\begin{notation}
Let $\cT$ be a tensor triangulated category and $S$ the set $\mathsf{End}_{\cT}(\mathbf1) \smallsetminus \mathfrak p$, where~$\mathfrak p$ is a prime ideal of $\mathsf{End}_{\cT}(\mathbf1)$. Following the classical convention, we will write $L_{\mathfrak p}$ and~$\mathit{\Gamma}_\mathfrak p$ instead of $L_S$ and~$\mathit{\Gamma}_S$. When $S= (\bbZ\smallsetminus \{0\})\cdot \id_{\mathbf1}$ (for instance when defining the rationalization $\Core_{k;\bbQ} = S^{-1}\Core_{k}$) we will write $L_{\bbQ}$ and $\mathit{\Gamma}_\bbQ$ instead. 
%(To tell the truth, the functor $\mathit{\Gamma_S}$ will not be mentioned anymore in this article.)
\end{notation}

\begin{remark}
\label{remark:id_compl_bootQ}
The cofinal inclusion $\iota: \Core_{k;\bbQ} \hookrightarrow (L_\bbQ \LCore_k)^c$ (described in Proposition~\ref{prop:rationalization}), and the fact that $L_\bbQ \LCore_k$ is idempotent complete, allow us to identify~$(L_\bbQ \LCore_k)^c$ with the idempotent completion of $\Core_{k;\bbQ}$.
Hence, thanks to \cite[Prop.~3.13]{Balmer-Prime}, we obtain the following homeomorphism of topological spaces
\[
\Spc(\iota) : \Spc((L_\bbQ\LCore_k)^c) \stackrel{\sim}{\longrightarrow} \Spc(\Core_{k;\bbQ})\,.
\]
\end{remark}

%\begin{remark}
%\label{remark:coprods_2}
%Since the Hom sets of $L_\bbQ\mathcal T_k$ are $\bbQ$-vector spaces,
%there is a canonical $\otimes$-exact functor $(\LCore_k)_\bbQ=\LCore_k\otimes \bbQ \to L_\bbQ \LCore_k$. 
%By part (8) of the proposition, this functor is fully faithful on $\Core_{k;\bbQ}$, but not in general. 
%For instance, it specializes to  
%\[
%\Hom_{\LCore_k}(\coprod_\bbZ \mathbf 1, \mathbf 1)\otimes \bbQ
%\to \Hom_{L_\bbQ\LCore_k}(L_\bbQ (\coprod_\bbZ \mathbf 1) , L_\bbQ \mathbf 1 )
%\]
%which, for $k=\bbZ$ say, is isomorphic to the canonical homomorphism 
%$
%(\prod_{\bbZ} \bbZ) \otimes \bbQ \to  \prod_{\bbZ} \bbQ
%$. 
%The latter is not surjective, since $(\frac{1}{n})_{n\in\bbZ}$ in a well-defined element of the codomain not lying in the image.
%\end{remark}
%-----------------------------------------------------------------------
\section{Proof of Theorem~\ref{thm:main1}}
%-----------------------------------------------------------------------

Our proof is based on a general result of tensor triangulated geometry (see Theorem~\ref{thm:general} and Corollary~\ref{cor:general}), and on the following three computations: the first one concerning the Grothendieck group, the second one concerning Quillen's higher algebraic $K$-theory, and the third one concerning non-connective algebraic $K$-theory.

\medbreak

\textbf{Computation 1:} By hypothesis, the base ring $k$ is finite or the algebraic closure of a finite field. Hence, it has only  finitely many prime ideals $\mathfrak p_1, \ldots, \mathfrak p_i, \ldots, \mathfrak p_n$. 
\Ivo{By the structure theorem for commutative artinian rings \cite[Thm.~8.7]{Atiyah-McDonald} (or by \cite[Thm.~3.1.4]{Fin-rings}), there exist a} positive integer~$m$ and a ring isomorphism 
$$k\simeq k/\mathfrak p_1^m \times \cdots \times k/\mathfrak p_i^m \times \cdots \times k/\mathfrak p_n^m\,.$$
The Grothendieck group functor commutes with direct products and so 
$$ K_0(k) \simeq K_0(k/\mathfrak p_1^m) \times \cdots \times K_0(k/\mathfrak p_i^m) \times \cdots \times K_0(k/\mathfrak p_n^m)\,.$$
Notice that the rings $k/\mathfrak p_i^m$, $i =1,\ldots ,n$, are {\em local}, \ie they have a unique maximal ideal. 
Hence, by combining \cite[Lemma~2.2]{K-bookI} with \cite[Lemma~2.1(2)]{K-bookII}, we have $K_0(k/\mathfrak p_i^m)=\bbZ$ and so by \eqref{eq:K-th-comp} and \eqref{eq:nonK-th-comp} we conclude that $\mathsf{End}_{\LCore_k}(\mathbf 1) = K_0(k)\simeq \bbK_0(k) \simeq \bbZ^n $.  
Using Proposition \ref{prop:rationalization}, we then obtain the following computation
\begin{equation}\label{eq:finite}
\mathsf{End}_{L_\bbQ \LCore_k}(\mathbf1) 
=  \Hom_{L_\bbQ \LCore_k}(L_\bbQ\Uloc(\uk), L_\bbQ\Uloc(\uk)) 
\simeq \mathsf{End}_{\LCore_k}(\mathbf1)\otimes_\bbZ \bbQ
\simeq \bbQ^n \,.
\end{equation}
\medbreak

\textbf{Computation 2:} When the base ring $k$ is finite, \cite[Prop.~1.16]{Weibel-Book} implies that all the abelian groups $K_n(k)$, with $n >0$, are finite. When~$k$ is the algebraic closure~$\overline{\bbF_q}$ of a finite field $\bbF_q$ (where $q=p^r$ with~$p$ a prime number and~$r$ a positive integer) Quillen~\cite{Quillen} performed the following computation
\begin{equation*}%\label{eq:comp-alg}
K_n(\overline{\bbF_q}) \simeq \left\{ \begin{array}{ll} \bbZ &  n=0 \\ \bigoplus_{l\neq p} \bbQ_\ell/\bbZ_\ell & n >0 \,\,\mathrm{odd} \\
0 & \mathrm{otherwise} \,. \end{array} \right.
\end{equation*}
In both cases, we observe that the groups $K_n(k)$, with $n\neq0$, are torsion. Using Proposition \ref{prop:rationalization} and \eqref{eq:K-th-comp} we then obtain the following computation
\begin{eqnarray}\label{eq:finite1}
\quad\Hom_{L_\bbQ \LCore^a_k}(\Sigma^n{\bf 1}, {\bf 1}) \simeq 
\Hom_{\LCore^a_k}(\Sigma^n U^a_k(\uk), U_k^a(\uk))\otimes_\bbZ \bbQ \simeq 0 && n \neq 0\,.
\end{eqnarray}
\medbreak

\textbf{Computation 3:} When $k$ is a regular ring (for instance a field), its negative $K$-theory groups $\bbK_n(k), n <0$, vanish; see \cite[Def.~3.3.1]{Rosenberg}. Hence, as in the above computation \eqref{eq:finite1}, we obtain
\begin{equation}\label{eq:computation-new}
 \Hom_{L_\bbQ \LCore^l_k}(\Sigma^n{\bf 1}, {\bf 1})\simeq 0 \qquad n \neq 0\,.
\end{equation}

\medbreak

The following general result is of independent interest.
The authors are grateful to Paul Balmer for providing the simple proof presented below.
\begin{theorem}\label{thm:general}
\label{thm:point}
Let $\mathcal T=(\mathcal T, \otimes, \mathbf 1)$ be a tensor triangulated category with arbitrary coproducts, and assume that it satisfies the following conditions: 
\begin{enumerate}
%\item $\mathcal T$ has arbitrary coproducts which are preserved by $\otimes$ in each variable.
\item $\mathcal T$ is compactly generated by the shifts of the $\otimes$-unit~$\mathbf 1$.
\item The $\mathbb Z$-graded endomorphism ring of~$\mathbf1$ is concentrated in degree zero.
\item The commutative ring $F:= \mathsf{End}_\mathcal T(\mathbf1)$ is a field.
\end{enumerate}
Then, the functor $X\mapsto \pi_*(X)= \Hom_{\mathcal T}(\Sigma^*\mathbf1, X)$ provides a $\otimes$-equivalence between $\mathcal T$ and the category $F\textrm-\mathrm{VS}^\bbZ$ of $\bbZ$-graded $F$-vector spaces (equipped with the usual tensor product given by $(V\otimes W)_n= \bigoplus_{i+j=n}V_i\otimes_F V_j$).
\end{theorem}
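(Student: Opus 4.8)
\medskip
\noindent\textbf{Proof strategy.}
The plan is to show directly that $\pi_\ast = \Hom_{\mathcal T}(\Sigma^\ast\mathbf 1,-)$ is a $\otimes$-equivalence, by a ``bootstrap'' from the generator $\mathbf 1$ using nothing beyond the three hypotheses. First I would record that $\pi_\ast$ is a homological functor commuting with arbitrary coproducts, since by~(1) each shift $\Sigma^n\mathbf 1$ is compact. By~(2) the graded ring $\bigoplus_n\Hom_{\mathcal T}(\Sigma^n\mathbf 1,\mathbf 1)$, over which $\pi_\ast$ is tautologically module-valued, is concentrated in degree zero, where by~(3) it equals the field $F$; hence $\pi_\ast$ takes values in the category $F\textrm-\mathrm{VS}^\bbZ$ of $\bbZ$-graded $F$-vector spaces. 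A one-line computation using~(2) shows $\pi_\ast(\Sigma^n\mathbf 1)\cong F[n]$, where $F[n]$ denotes $F$ placed in degree $n$, and then compactness gives $\pi_\ast\big(\bigoplus_i \Sigma^{n_i}\mathbf 1\big)\cong \bigoplus_i F[n_i]$.

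The key step is the observation that every object of $\mathcal T$ is isomorphic to a coproduct of shifts of $\mathbf 1$. Given $X\in\mathcal T$, choose a homogeneous $F$-basis of the graded vector space $\pi_\ast(X)$ (this is where $F$ being a field enters); since a morphism $\Sigma^n\mathbf 1\to X$ is exactly an element of $\pi_n(X)$, the chosen basis vectors assemble into a morphism $\varphi\colon\bigoplus_i\Sigma^{n_i}\mathbf 1\to X$ which by construction induces an isomorphism on all $\pi_n$. As $\{\Sigma^n\mathbf 1\}_{n\in\bbZ}$ is a generating set by~(1), any such morphism is an isomorphism (its cone has vanishing $\pi_\ast$, hence is zero), so $\varphi$ is an isomorphism. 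In particular $\pi_\ast$ is essentially surjective, since an arbitrary graded vector space $V$ has a homogeneous basis and so is $\cong\bigoplus_i F[n_i]\cong\pi_\ast(\bigoplus_i\Sigma^{n_i}\mathbf 1)$.

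Full faithfulness then reduces to a direct computation on coproducts of shifts of $\mathbf 1$. For $X=\bigoplus_i\Sigma^{n_i}\mathbf 1$ and any $Y$, the universal property of coproducts together with compactness give natural identifications $\Hom_{\mathcal T}(X,Y)\cong\prod_i\pi_{n_i}(Y)$ and $\Hom_{F\textrm-\mathrm{VS}^\bbZ}(\pi_\ast X,\pi_\ast Y)\cong\prod_i(\pi_\ast Y)_{n_i}=\prod_i\pi_{n_i}(Y)$. To check that $f\mapsto\pi_\ast(f)$ realizes this bijection it suffices to treat $X=\Sigma^n\mathbf 1$, where both groups are $\pi_n(Y)$ and the induced map is the identity because $\pi_n(f)$ sends $\id_{\Sigma^n\mathbf 1}$ to $f$; the general case follows since everything in sight converts the coproduct over $i$ into the same product. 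Combined with the previous paragraph, $\pi_\ast$ is fully faithful and essentially surjective, hence an equivalence of triangulated categories.

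It remains to upgrade this to a $\otimes$-equivalence. Since $-\otimes-$ is exact in each variable one has $\Sigma^n\mathbf 1\otimes\Sigma^m\mathbf 1\cong\Sigma^{n+m}\mathbf 1$, and $\pi_\ast$ carries this to the canonical isomorphism $F[n]\otimes_F F[m]\cong F[n+m]$; moreover $\pi_\ast(\mathbf 1)=F[0]$ is the monoidal unit of $F\textrm-\mathrm{VS}^\bbZ$. As both tensor products commute with coproducts in each variable and every object is a coproduct of shifts of $\mathbf 1$, these isomorphisms assemble into a natural isomorphism $\pi_\ast(X\otimes Y)\cong\pi_\ast(X)\otimes_F\pi_\ast(Y)$. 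I expect the only point needing genuine care to be the (otherwise routine) verification that this structure map is natural and compatible with the associativity, unit and symmetry constraints; as with full faithfulness, this is handled by reducing to single shifts $\Sigma^n\mathbf 1$ of the unit, where it becomes the evident statement about $F[n]\otimes_F F[m]$. Alternatively one could deduce the result from the Schwede--Shipley/Keller recognition theorem, identifying $\mathcal T$ with the derived category of the endomorphism dg algebra of $\mathbf 1$, which is formal with homology $F$ concentrated in degree $0$, whence $\mathcal T\simeq\cD(F)\simeq F\textrm-\mathrm{VS}^\bbZ$; but the elementary argument above is shorter and avoids any enriched machinery.
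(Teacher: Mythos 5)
Your proof is correct, but it runs in the opposite direction from the paper's. You show directly that $\pi_*$ is an equivalence: first establishing that every object of $\mathcal T$ decomposes as a coproduct of shifts of $\mathbf 1$ (by realizing a homogeneous $F$-basis of $\pi_*(X)$ as a morphism $\bigoplus_i\Sigma^{n_i}\mathbf 1\to X$ which is a $\pi_*$-isomorphism, hence an isomorphism since the compact generators detect vanishing), and then verifying full faithfulness by the standard coproduct/compactness computation. The paper instead builds the quasi-inverse ``realization'' functor $-\otimes\mathbf 1:F\textrm-\mathrm{VS}^\bbZ\to\mathcal T$ by hand on the generators $F[i]\mapsto\Sigma^i\mathbf 1$, extends it additively, checks full faithfulness there, and then obtains essential surjectivity by showing the essential image is a localizing subcategory containing $\mathbf 1$ — the crucial observation being that in $F\textrm-\mathrm{VS}^\bbZ$ every morphism splits as (zero morphism) $\oplus$ (isomorphism), so cones stay in the image. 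Both routes are clean; yours substitutes the ``$\pi_*$ detects zero objects'' use of compact generation for the paper's ``localizing subcategory containing the generators is everything'' use, and replaces the splitting-of-morphisms trick with the explicit basis-lifting decomposition. Your treatment of the monoidal compatibility is at the same level of detail as the paper's (both reduce to the single shifts $\Sigma^n\mathbf 1$ and both tacitly rely on $\otimes$ preserving coproducts in each variable, which the paper verifies for the categories to which the theorem is applied but does not list among the theorem's stated hypotheses). The Schwede--Shipley alternative you mention would also work but, as you note, is overkill.
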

\begin{proof}
The proof will consist on constructing a quasi-inverse $-\otimes \mathbf1 : F\textrm-\mathrm{VS}^\bbZ\to \mathcal T$ of the functor $\pi_*(-)$. For the graded vector space~$F[i]$, consisting of~$F$ in degree~$i$ and zero elsewhere, we set $F[i]\otimes \mathbf1:= \Sigma^i\mathbf1$. 
Thanks to condition (2) we have a canonical identification
$$\Hom_{F\textrm-\mathrm{VS}^\bbZ}(F[i],F[j]) \simeq \Hom_\mathcal T(\Sigma^i\mathbf1, \Sigma^j\mathbf1) \qquad i,j\in \bbZ\,,$$
and so we obtain a well-defined functor on these objects. Since the objects~$\Sigma^i \mathbf1$ are assumed compact in~$\mathcal T$, this functor extends additively to a fully faithful functor defined on all objects of the form $\bigoplus_\alpha F[i_\alpha]$. In sum, we obtain a fully faithful functor $-\otimes \mathbf1 : F\textrm-\mathrm{VS}^\bbZ\to \mathcal T$. Now, in order to prove that it is essentially surjective it suffices by condition (1) to show that its essential image is triangulated (clearly it contains~$\mathbf1$ and is closed under coproducts). As in the category $F\textrm-\mathsf{VS}^\bbZ$, every morphism~$f$ in the image of the functor $-\otimes \mathbf1$ is a direct sum of a zero morphism~$f_0$ and an isomorphism~$f_1$. Both~$f_0$ and $f_1$ have an evident cone, such that their sum $\mathrm{cone}(f_0)\oplus \mathrm{cone}(f_1) = \mathrm{cone}(f)$ is again in the image. 
Therefore, we conclude that the functor $-\otimes \mathbf1$ is an equivalence of categories. Moreover, it is symmetric monoidal because it is so on the generators $F[i]$ via the isomorphism
\begin{align*}
(F[i] \otimes F[j])\otimes\mathbf1
= F[i+j] \otimes \mathbf1 
= \Sigma^{i+j} \mathbf1 
\simeq \Sigma^i \mathbf1 \otimes \Sigma^j \mathbf1
= (F[i]\otimes \mathbf1 )\otimes (F[j]\otimes \mathbf1)
\, .
\end{align*}
Using again these generators, we obtain an evident natural isomorphism $\pi_* (V\otimes \mathbf1)\simeq V$ for every $V\in F\textrm-\mathsf{VS}^\bbZ$. This implies that $-\otimes \mathbf1$ is quasi-inverse to $\pi_\ast(-)$ and so that $\pi_\ast(-)$ is an equivalence of tensor (triangulated) categories.
\end{proof}

\begin{corollary}\label{cor:general}
\label{cor:point}
Let $\cT$ be as in Theorem~\ref{thm:point}, and let us denote by $\cT^c$ its tensor triangulated subcategory of compact objects. Then $\rho:\Spc(\cT^c)\stackrel{\sim}{\to} \Spec(F)$ is a bijection and therefore a homeomorphism.
\end{corollary}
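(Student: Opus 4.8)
The plan is to invoke Theorem~\ref{thm:point} to reduce everything to the model category $F\textrm-\mathrm{VS}^\bbZ$, and then to compute the spectrum of its subcategory of compact objects by hand. First I would observe that under the $\otimes$-equivalence $\pi_*(-):\cT\isoto F\textrm-\mathrm{VS}^\bbZ$, the compact objects $\cT^c$ correspond precisely to the finite-dimensional (i.e.\ bounded, finite-dimensional in each degree) graded $F$-vector spaces, which is the same as $\cD^{\mathrm{perf}}(F)$, the bounded derived category of $F$. So the problem becomes: identify $\Spc(\cD^{\mathrm{perf}}(F))$ with $\Spec(F)$ for $F$ a field. Since $F$ is a field, $\Spec(F)$ is a single point, so we must show $\Spc(\cD^{\mathrm{perf}}(F))$ is also a single point.

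The key step is to show that the tensor triangulated category $\cT^c\simeq \cD^{\mathrm{perf}}(F)$ is \emph{local} in Balmer's sense, i.e.\ that $0$ is a prime $\otimes$-ideal, equivalently that $a\otimes b\simeq 0$ implies $a\simeq 0$ or $b\simeq 0$. This is immediate in $F\textrm-\mathrm{VS}^\bbZ$: the tensor product of two nonzero bounded graded $F$-vector spaces is again nonzero (look at the top nonzero degrees and use that $F$ is a field, so there are no zero-divisors among dimensions). Once we know $\cT^c$ is local with $\Spc(\cT^c)=\{(0)\}$ a single point, the comparison map $\rho:\Spc(\cT^c)\to\Spec(F)$ is automatically a bijection of one-point sets. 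Finally I would note that $\rho$ is always a continuous map of spectral spaces (by \cite{Balmer-spectra}), hence a continuous bijection between two one-point spaces (trivially Hausdorff), so it is a homeomorphism; and in fact, as recorded in the Remark in~\S\ref{subsec:tr_notations}, it upgrades to an isomorphism of locally ringed spaces.

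Alternatively, and perhaps more cleanly, one can just appeal directly to Balmer's classification of thick $\otimes$-ideals: since every nonzero object of $\cD^{\mathrm{perf}}(F)$ generates the whole category as a thick $\otimes$-ideal (again because $F$ is a field and finite-dimensional graded vector spaces are "built" from shifts of $F[0]$ using cones and retracts), the only thick $\otimes$-ideals are $0$ and the whole category, whence $\Spc(\cT^c)$ has exactly one point; then one observes $\rho$ surjects onto $\Spec(F)$, which is also a single point. I would present whichever of these two arguments is shortest in context; they are essentially the same.

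The main obstacle is essentially nil here: the content has all been front-loaded into Theorem~\ref{thm:point}, and the remaining verification that $\Spc(\cD^{\mathrm{perf}}(F))$ is a point for a field $F$ is a standard and elementary fact in tensor triangular geometry (it is one of Balmer's first examples). The only thing requiring a moment's care is making sure the identification "$\cT^c\leftrightarrow$ finite-dimensional graded $F$-vector spaces" is correct — one must check that compactness in $F\textrm-\mathrm{VS}^\bbZ$ means bounded with finite-dimensional graded pieces, which follows since the compact generators are the $\Sigma^i\mathbf 1\leftrightarrow F[i]$ and compact objects are retracts of finite coproducts/extensions of these — and then that $\rho$ indeed lands homeomorphically (rather than merely bijectively) on $\Spec(F)$, which is free since both sides are one-point spaces.
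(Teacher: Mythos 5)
Your proposal is essentially correct and follows the same route as the paper. The paper's proof observes that by Theorem~\ref{thm:point} every object of $\cT$ is a direct sum of shifts of~$\mathbf 1$, so $\cT$ is a semi-simple abelian category; hence $\{0\}$ is the only proper thick $\otimes$-ideal of $\cT^c$, it is prime, so $\Spc(\cT^c)$ is a singleton, and $\rho$ sends it to the unique point of $\Spec(F)$. Your ``alternative'' argument (every nonzero object generates $\cD^{\mathrm{perf}}(F)$ as a thick $\otimes$-ideal, so $\{0\}$ is the only proper one) is exactly this. One small imprecision in your first version: showing that $\{0\}$ is a prime $\otimes$-ideal shows that $\cT^c$ is \emph{local} in Balmer's sense, i.e.\ that $0 \in \Spc(\cT^c)$ and it is the unique closed point, but this by itself does not rule out the existence of other (non-closed) primes --- a local tensor triangulated category can have a large spectrum. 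To conclude that $\Spc(\cT^c)$ is a single point you need the stronger statement, which your alternative argument supplies; you should not write as though locality alone were sufficient.
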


\begin{proof}
An immediate consequence of Theorem~\ref{thm:point} is the fact that every object in $\mathcal T$ decomposes as a direct sum of shifts of the $\otimes$-unit; the triangulated category $\mathcal T$ is even a semi-simple abelian category. In particular, $\{0\}\subset \mathcal \cT_c$ is the only proper thick $\otimes$-ideal of the category of compact objects. Since $\{0\}$ is also a prime $\otimes$-ideal, we conclude that it is the unique point of the spectrum. 
Clearly, the comparison map $\rho: \Spc(T^c) \to \Spec(F)$ identifies this point with the unique point of $\Spec(F)$. 
\end{proof}

\begin{lemma}
\label{lemma:cat_decomp}
Let $\cC$ be an idempotent complete $\bbZ$-linear tensor category with $\otimes$-unit~$\mathbf1$. Assume that the identity morphism $\id_\mathbf1$ can be written as the sum of the orthogonal idempotents $e_1,\ldots,e_i, \ldots,e_n$ of the  endomorphism ring $\mathsf{End}_\cC(\mathbf1)$. Then:
\begin{enumerate}
\item
Every object~$X$ in~$\cC$ splits as $X\simeq \mathrm{Im}(e_1\cdot \id_X)\oplus \cdots \oplus \mathrm{Im}(e_n\cdot \id_X)$.

\item
Let $e_i\cC$ denote the full subcategory of $\cC$ formed by the objects $e_iX:=\mathrm{Im}(e_i\cdot \id_X)$, $X\in \cC$. Then, $e_i\cC$ is stable under tensor products and the assignment $X\mapsto e_iX$ extends uniquely to a well-defined $\bbZ$-linear $\otimes$-functor $\cC\to e_i\cC$. 

\item
The combined assignment $X\mapsto (e_1X,\ldots,e_iX, \ldots, e_nX)$ gives rise to an equivalence of $\bbZ$-linear $\otimes$-categories
\[
\cC \stackrel{\sim}{\too} e_1\cC\oplus\cdots\oplus e_i\cC \oplus \dots \oplus e_n\cC\,,
\]
whose quasi-inverse functor is given by $(X_1,\ldots,X_n)\mapsto X_1\oplus\cdots \oplus X_n$.

\item
When $\cC$ is moreover triangulated in such a way that the suspension functor $\Sigma$ is $\mathsf{End}_\cC(\mathbf1)$-linear (for instance if $\cC$ is the homotopy category associated to a symmetric monoidal stable Quillen model category\footnote{Recall from \cite[Defs.~10.2 and 15.1]{Additive} and \cite[Thm.~7.5]{CT1} that this is precisely the case for the tensor triangulated categories of non-commutative motives.}) the triangulated structure restricts to each factor~$e_i\cC$ and the decomposition in~(3) becomes an equivalence of tensor triangulated categories.
\end{enumerate}
\end{lemma}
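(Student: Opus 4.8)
The plan is to prove the four statements more or less in the order listed, since each builds on the previous one, and to reduce everything to elementary module-theoretic manipulations with the idempotents $e_1,\dots,e_n$.

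\medbreak

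\textbf{Step 1 (object splitting).} First I would check that for a fixed $X \in \cC$, the endomorphisms $e_i \cdot \id_X$ of $X$ are again orthogonal idempotents summing to $\id_X$. This is purely formal: the action $f \mapsto f \cdot \id_X$ is a ring homomorphism $\mathsf{End}_\cC(\mathbf1) \to \mathsf{End}_\cC(X)$ (using that $\mathbf1$ is the $\otimes$-unit and that $-\otimes-$ is $\bbZ$-bilinear and functorial), so it carries an orthogonal idempotent decomposition of $\id_\mathbf1$ to one of $\id_X$. Since $\cC$ is idempotent complete, each $e_i\cdot\id_X$ has an image $e_iX := \mathrm{Im}(e_i\cdot\id_X)$, together with the usual split mono $e_iX \hookrightarrow X$ and split epi $X \twoheadrightarrow e_iX$; and a finite orthogonal decomposition of the identity into idempotents with images yields the direct sum decomposition $X \simeq \bigoplus_i e_iX$. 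This is standard (e.g.\ in any Karoubian additive category), so I would just cite the relevant fact and spell out the bilinearity point.

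\medbreak

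\textbf{Step 2 (the factor categories and the functors $X \mapsto e_iX$).} Next I would make $X \mapsto e_iX$ into a functor: a morphism $f\colon X \to Y$ restricts to $e_if := (e_i\cdot\id_Y)\circ f \circ (\text{split mono})$, or more cleanly, $f$ commutes with the idempotents $e_i\cdot\id$ on source and target since these come from the central ring $\mathsf{End}_\cC(\mathbf1)$ acting through natural transformations of $\id_\cC$; hence $f$ decomposes as $\bigoplus_i e_if$ and each summand is a morphism $e_iX \to e_iY$. Functoriality and $\bbZ$-linearity are then immediate. For the monoidal statement: the unit of $e_i\cC$ is $e_i\mathbf1 = \mathrm{Im}(e_i)$, and the key computation is $e_iX \otimes e_jY \simeq (e_ie_j)(X\otimes Y)$, which vanishes for $i\neq j$ and equals $e_i(X\otimes Y)$ for $i=j$; this uses that $e_i\cdot\id_{X\otimes Y} = (e_i\cdot\id_X)\otimes \id_Y = \id_X \otimes (e_i\cdot\id_Y)$, again a consequence of centrality. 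So $e_i\cC$ is $\otimes$-closed with unit $e_i\mathbf1$, and $X \mapsto e_iX$ is (lax, in fact strong) monoidal. I would record that the monoidal constraints are inherited from $\cC$ by restriction along the split idempotent, with the pentagon and hexagon axioms automatic.

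\medbreak

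\textbf{Step 3 (the product decomposition) and Step 4 (triangulated refinement).} The equivalence $\cC \xrightarrow{\sim} e_1\cC \oplus \cdots \oplus e_n\cC$ now follows: the functor $X \mapsto (e_1X,\dots,e_nX)$ and the functor $(X_1,\dots,X_n) \mapsto X_1 \oplus \cdots \oplus X_n$ are mutually quasi-inverse by Step 1 (for $\cC \to \prod \to \cC$) and by the orthogonality $e_iX_j \simeq 0$ for $i \neq j$ together with $e_iX_i \simeq X_i$ (for $\prod \to \cC \to \prod$); both natural isomorphisms are built from the split (co)projections. Monoidality of the equivalence is clear once one equips the product category with the componentwise tensor product. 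Finally, for (4): assuming $\Sigma$ is $\mathsf{End}_\cC(\mathbf1)$-linear, one has $\Sigma(e_iX) = e_i(\Sigma X)$ naturally, so $\Sigma$ preserves each factor $e_i\cC$; and a distinguished triangle in $\cC$ splits, by Step 1 applied to each of its three objects and the fact that the connecting maps are $e_i\cdot\id$-equivariant, into the direct sum of its ``$e_i$-components,'' each of which is therefore a distinguished triangle in $e_i\cC$. Conversely a triple of triangles in the factors sums to a triangle in $\cC$. This gives that each $e_i\cC$ is triangulated with $\Sigma$ and triangles inherited from $\cC$, and that the decomposition in (3) upgrades to an equivalence of tensor triangulated categories. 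The main obstacle, such as it is, is Step 4: one must be careful that the triangulated structure really restricts — this is exactly where the hypothesis that $\Sigma$ be $\mathsf{End}_\cC(\mathbf1)$-linear is used, since without it $\Sigma(e_iX)$ need not lie in $e_i\cC$ — and that the octahedral axiom is inherited, which again reduces to checking it componentwise. The footnote observation that the categories of non-commutative motives arise from symmetric monoidal stable model categories guarantees this linearity hypothesis holds in our intended application.
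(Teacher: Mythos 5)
Your proof is correct and follows essentially the same route as the paper's: both rest on the fact that $\mathsf{End}_\cC(\mathbf1)$ acts centrally on $\cC$ via the tensor product (the paper cites \cite[Prop.~2.2]{Balmer-spectra} for this), so that orthogonal idempotents of $\id_{\mathbf1}$ split every object, and on the observation that a direct sum of triangles is distinguished if and only if each summand is (the paper cites \cite[Lemma~1.6]{balmer-schlichting}). The paper leaves these two citations to do all the work; you have simply spelled out the details.
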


\begin{proof}
Items (1)-(3) are straightforward consequences of the fact that $\cC$ is canonically an $\mathsf{End}_\cC(\mathbf1)$-linear category via the tensor product; see \cite[Prop.~2.2]{Balmer-spectra}. In what concerns item (4), this follows from the fact that a direct sum of triangles is distinguished if and only if each summand is a distinguished triangle; see \cite[Lemma~1.6]{balmer-schlichting}.
\end{proof}

We now have all the ingredients needed for the proof of Theorem~\ref{thm:main1}. By combining \eqref{eq:finite} with \eqref{eq:finite1}, we observe that the graded endomorphism ring $\mathsf{End}_{L_\bbQ \LCore^a_k}(\mathbf 1)_*$ is just the direct sum of~$n$ copies of~$\bbQ$ (concentrated in degree zero). This fact, combined with Lemma~\ref{lemma:cat_decomp}, implies that $L_\bbQ \LCore^a_k$ decomposes as a direct sum of isomorphic categories~$\cT_i=\cT$ $(i=1,\ldots,n)$, each one satisfying all the hypotheses of Theorem~\ref{thm:point} with $F=\bbQ$.
We obtain in this way the following commutative diagram of $\otimes$-exact functors:
\[
\xymatrix{
\Core^a_{k;\bbQ} \ar[r]^-{\iota}  & 
L_{\bbQ} \LCore^a_k \ar[d]_\simeq \ar[r]^-{\pi_*}_-{\simeq} &
 \bbQ^n \textrm-\Mod^\bbZ_{\phantom{\bbZ}} \ar[d]^\simeq &
   \cD(\bbQ^n_{\phantom{n}}) \ar[l]_-{H_*}^-\simeq \\
& \bigoplus_{i=1}^n \cT \ar[r]^-{(\pi_*)_i}_-\simeq &
  \bigoplus_{i=1}^n \bbQ\textrm-\mathrm{VS}^\bbZ_{\phantom{\bbZ}} &\,.
}
\]
All the functors are equivalences, except the fully faithful embedding~$\iota$; see Remark~\ref{remark:id_compl_bootQ}. However, when we restrict ourselves to compact objects, $\iota$ becomes also an equivalence since every $M\in (L_\bbQ\LCore^a_k)^c$ is isomorphic to a finite direct sum of shifts of the $\otimes$-unit and thus belongs to the essential image 
of~$\iota:\Core^a_{k;\bbQ}= \langle \mathbf1 \rangle \to L_\bbQ\LCore^a_k$.
Hence, by restricting the above upper horizontal composition to the full subcategories of compact objects, we obtain the searched equivalence $\Core^a_{k;\bbQ}\simeq \cD^\mathrm{perf}(\bbQ^n)$ of Theorem~\ref{thm:main1}; recall that for any ring~$R$ the compact objects and perfect complexes in $\cD(R)$ agree, \ie $\cD(R)^c=\cD^{\mathrm{perf}}(R)$. The conclusion concerning Balmer's spectrum follows from Corollary~\ref{cor:point}. Finally, if $k$ is regular the computation \eqref{eq:computation-new} allows us to use precisely the same arguments as above in order to conclude that $\Core^l_{k;\bbQ}$ is also tensor equivalent to $\cD^{\mathrm{perf}}(\bbQ^n)$.

%-----------------------------------------------------------------------
\section{Proof of Theorem~\ref{thm:main2}}\label{sec:proofmain2}
%-----------------------------------------------------------------------
Our proof is based on a general result of tensor triangular geometry (see Theorem~\ref{thm:hereditary_spectrum}) and on the following three computations of higher algebraic $K$-theory: the first one concerning finite fields, the second one their algebraic closures, and the third one concerning non-connective algebraic $K$-theory.

\medbreak

\textbf{Computation 1:} Let $k=\bbF_q$ be a finite field, where $q=p^r$ with~$p$ a prime number and~$r$ a positive integer.
Quillen's computation \cite{Quillen} of the algebraic $K$-theory of~$\bbF_q$, combined with \eqref{eq:K-th-comp}, gives rise to the following computation:
\begin{equation*}
\Hom_{\Core^a_k}(\Sigma^n {\bf 1}, {\bf 1}) 
\simeq K_n(\bbF_q)
\simeq 
\left\{ \begin{array}{ll} 
\bbZ &  n=0 \\
 \bbZ/(q^m-1)\bbZ & n=2m-1,\, m>1  \\
 0 & \mathrm{otherwise} \,.
\end{array} \right.
\end{equation*}

\medbreak

\textbf{Computation 2:} Let $k =\overline{\bbF_q}$ be the algebraic closure of a finite field $\bbF_q$. Quillen's computation \cite{Quillen} of the algebraic $K$-theory of~$\overline{\bbF_q}$, combined with the isomorphism \eqref{eq:K-th-comp}, gives rise to the following computation:
\begin{equation*}
\Hom_{\Core^a_k}(\Sigma^n {\bf 1}, {\bf 1}) 
\simeq K_n(\overline{\bbF_q})
\simeq 
\left\{ \begin{array}{ll} 
\bbZ &  n=0 \\
\bigoplus_{l \neq p} \bbQ_\ell/\bbZ_\ell & n >0 \,\,\mathrm{odd} \\
 0 & \mathrm{otherwise} \,.
\end{array} \right.
\end{equation*}

\medbreak

\textbf{Computation 3:} Whenever $k$ is a field, we have $\bbK_n(k)=0$ for $n <0$; see \cite[Def.~3.3.1]{Rosenberg}.

\medbreak

In the above cases, since $K_n(k)$ it trivial in negative degrees, Balmer's result \cite[Thm.~7.13]{Balmer-spectra} applied to the tensor triangulated category $\Core^a_k$ implies that the comparison map 
\[
\rho : \Spc(\Core^a_k) \twoheadrightarrow \Spc(\mathsf{End}_{\Core^a_k}({\bf 1}))= \Spec(\bbZ)
\]
is surjective. By the above Computation $3$, we observe that the same result holds for $\Core^l_k$. This proves item (i) of Theorem~\ref{thm:main2}. In what concerns the prime ideals $s\bbZ \subset \bbZ$, where $s$ is either $0$ or~$p$, we have the computation
\begin{equation}
\label{comp:end_p}
\mathsf{End}_{(\Core^a_k)_{(s)}}(\mathbf1)_*
\simeq (\mathsf{End}_{\Core^a_k}(\mathbf1)_*)_{(s)}
\simeq \bbZ_{(s)}\,,
\end{equation}
with $\bbZ_{(s)}$ concentrated in degree zero. The left-hand-side isomorphism follows from Proposition~\ref{prop:rationalization}. The right-hand-side isomorphism follows from the fact that the groups $K_n(k)$, $n >0$, are not only torsion (this suffices for the case $s=0$) but moreover all their torsion is coprime to~$p$.

\medbreak

The following general result, which as Theorem~\ref{thm:general} we expect to soon be part of the toolkit of tensor triangular geometry, computes the spectrum of a tensor triangulated category whose graded endomorphism ring of the $\otimes$-unit has a special form.

\begin{definition}
Let $\mathcal T$ be a triangulated category with suspension functor $\Sigma$. We say that  \emph{$\mathcal T$ has period~$n$}, with $n>0$, if there exists a natural isomorphism $\eta: \Sigma^n \simeq \Id_{\mathcal T}$ and moreover $n$ is the minimal positive integer verifying this condition. If such an integer $n$ does not exists, we will say that \emph{$\mathcal T$ has period $n=0$}.
\end{definition}

\begin{theorem}
\label{thm:hereditary_spectrum}
Let $\mathcal T=(\mathcal T, \otimes, \mathbf 1)$ be a tensor triangulated category with period $n \geq~0$. Assume moreover that $\cT$ has arbitrary coproducts, that the tensor product is coproduct-preserving in each variable, and that: 
\begin{enumerate}
\item $\mathcal T$ is compactly generated by the shifts of the $\otimes$-unit~$\mathbf 1$.
\item The $\mathbb Z$-graded endomorphism ring of the $\otimes$-unit is of the form
\[
\mathsf{End}_{\mathcal T}(\mathbf 1)_* = R [\eta, \eta^{-1}]\,,
\]  
where $R:=\mathsf{End}_\mathcal T(\mathbf 1)$ is the degree zero subring and $\eta: \Sigma^n\mathbf 1 \stackrel{\sim}{\to} \mathbf 1$ is an isomorphism in degree~$n$. When $n=0$, we take $\eta = \id_{\bf 1}$.
\item The ring $R$ is hereditary and noetherian.
\end{enumerate}
Then, the continuos comparison map  $\rho:\Spc(\mathcal T^c)\stackrel{\sim}{\to} \Spec(R)$ is a homeomorphism.
\end{theorem}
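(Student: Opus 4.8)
The plan is to reduce the statement to the affine case $\cT = \cD(R)$ by first understanding the structure of $\cT$ via condition (2), and then invoking Balmer's comparison theorem in a setting where it is known to be an isomorphism. First I would note that, since $R$ is hereditary and noetherian and $\mathsf{End}_\cT(\mathbf 1)_* = R[\eta,\eta^{-1}]$ with $\eta$ of degree $n$ (and $\eta = \id$ when $n=0$), the homological functor $\pi_*(-) = \Hom_\cT(\Sigma^*\mathbf 1, -)$ takes values in $\bbZ$-graded $R[\eta,\eta^{-1}]$-modules, equivalently (via $\eta$-periodicity) in $\bbZ/n$-graded (or, when $n = 0$, $\bbZ$-graded) $R$-modules. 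Because $\mathbf 1$ is a compact generator by (1), $\pi_*$ is conservative and detects triangles, and the standard hereditary-ring argument (every graded module over a hereditary ring splits as a sum of shifted cyclic modules, and the graded analogue of $\Hom$ and $\Ext^1$ controls all higher obstructions) shows that $\pi_*$ induces an equivalence of tensor triangulated categories between $\cT$ and $\cD(R[\eta,\eta^{-1}])$, or rather the derived category of the graded ring $R[\eta,\eta^{-1}]$ — concretely, $\cD(R)$ with its $n$-periodic structure. This is the exact analogue of the argument in Theorem~\ref{thm:point}, where $R$ was a field; hereditarity is what replaces semisimplicity and forces the homotopy category to be "formal" enough that objects are determined by their homology.

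Next I would pass to compact objects: $\cT^c \simeq \cD^{\mathrm{perf}}(R)$ with the induced $n$-periodic tensor triangulated structure (when $n = 0$ this is literally $\cD^{\mathrm{perf}}(R)$). Now Balmer's computation of $\Spc(\cD^{\mathrm{perf}}(R))$ for noetherian $R$ (from \cite{Balmer-spectra}) gives $\Spc(\cD^{\mathrm{perf}}(R)) \cong \Spec(R)$ via $\rho$, and I would check that quotienting by the periodicity — i.e.\ the presence of the invertible object $\Sigma^n\mathbf 1 \simeq \mathbf 1$ — does not change the spectrum, since an invertible object lies in no prime $\otimes$-ideal and imposing its triviality is vacuous at the level of supports; more precisely, the classification of thick $\otimes$-ideals of $\cD^{\mathrm{perf}}(R)$ by specialization-closed subsets of $\Spec(R)$ is unaffected. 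Then I would verify that the comparison map $\rho$ for $\cT^c$ factors through, and agrees with, the one for $\cD^{\mathrm{perf}}(R)$ under the equivalence, using that $\rho$ is natural in tensor triangulated functors and that both send a prime $\cP$ to the set of $f \in R$ with $\mathrm{cone}(f) \notin \cP$. This yields the homeomorphism $\rho : \Spc(\cT^c) \stackrel{\sim}{\to} \Spec(R)$.

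The main obstacle I expect is the first step: rigorously producing the $\otimes$-equivalence $\cT \simeq \cD(R[\eta,\eta^{-1}])$ from hypotheses (1)--(3), because unlike the field case there is genuine homological algebra — one must control $\Ext^1$ and show higher Postnikov obstructions vanish so that every object is a (possibly infinite) coproduct of shifts of modules of the form $\Sigma^i \widetilde{M}$ for $M$ a graded $R[\eta,\eta^{-1}]$-module, with morphisms computed by the graded $\Hom$--$\Ext^1$ exact sequence. The cleanest route is probably to cite or adapt an Eilenberg--MacLane/formality statement for compactly generated triangulated categories with a compact generator whose graded endomorphism ring is hereditary (so the category is equivalent to the derived category of that graded ring); such a result is classical for $R$ a field and extends verbatim to hereditary $R$ because global dimension $\leq 1$ kills all the obstruction groups. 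One should also take care with the $n=0$ versus $n>0$ bookkeeping, but this is only notational. Once $\cT \simeq \cD(R[\eta,\eta^{-1}])$ is in hand, everything else is a direct appeal to Balmer's results and is routine.
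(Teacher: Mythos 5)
Your proposal has a genuine gap at its very first and central step: the claim that the hypotheses force a tensor triangulated equivalence $\cT\simeq \cD(R[\eta,\eta^{-1}])$. This does not follow from (1)--(3). The ``classical'' formality statement you invoke requires $\cT$ to be \emph{algebraic} (the derived category of a dg category), so that Keller's derived Morita theory applies and an $A_\infty$-argument can exploit global dimension $\le 1$. Theorem~\ref{thm:hereditary_spectrum} makes no such assumption: $\cT$ is an arbitrary compactly generated tensor triangulated category with the given graded endomorphism ring, and nothing in (1)--(3) rules out topological or exotic models. Moreover, the intermediate structural claim you lean on (``every graded module over a hereditary ring splits as a sum of shifted cyclic modules'') is false already for $R=\bbZ$ (e.g.\ $\bbQ$), and $\pi_*$ is in any case not even faithful: by the universal coefficient exact sequence there is an $\mathrm{Ext}^1_R(\pi_*\Sigma X,\pi_* Y)$ kernel, and the sequence only splits \emph{unnaturally}, so $\pi_*$ certainly does not implement an equivalence. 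The most one can extract from the hereditary hypothesis is a \emph{bijection on isomorphism classes} of objects, not an equivalence of categories, and a bijection on iso classes is not enough to transport a spectrum computation.

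The paper's argument deliberately avoids this issue. Instead of trying to identify $\cT$ with $\cD(R)$, it develops Meyer--Nest--style relative homological algebra internally in $\cT$: $\pi_*$ is shown to be the universal $\cI$-exact functor, yielding a K\"unneth formula and a universal coefficient sequence (which, together with hereditarity, give the realization lemma and the bijection on iso classes). These are then used to build residue field objects $\kappa(\mathfrak p)$ and a support $\sigma(X)=\{\mathfrak p \mid X\otimes\kappa(\mathfrak p)\neq 0\}$, shown to agree on compacts with the elementary support $\tilde\sigma(X)=\{\mathfrak p \mid (\pi_*X)_\mathfrak p\neq 0\}$. One then verifies that $(\Spec R,\sigma)$ is a \emph{classifying support datum} on $\cT^c$ in the sense of \cite{Ivo}, and separately that the induced map $\varphi:\Spec R\to \Spc(\cT^c)$ is a section of $\rho$. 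This produces the homeomorphism directly, with no need to know the underlying category up to equivalence. If you wish to salvage your route, you would need either to add an algebraicity hypothesis to the theorem (weakening it, and then still needing to handle the $n>0$ graded case carefully) or to reformulate the first step as a support/iso-class statement rather than a categorical equivalence --- at which point you would in effect be reproducing the paper's proof.
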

The proof of Theorem~\ref{thm:hereditary_spectrum} furnishes us the following description of the inverse of~$\rho$. 

\begin{corollary}
\label{cor:inverse_rho}
Under the hypotheses of Theorem~\ref{thm:hereditary_spectrum}, the inverse of the homeomorphism $\rho:\Spc(\cT^c)\stackrel{\sim}{\too} \Spec(R)$ is given by 
\begin{eqnarray*}
\Spec(R)  \ni \mathfrak p & \mapsto &  \rho^{-1}(\mathfrak p)=\{ X\in \cT^c \mid \pi_*(X)_{\mathfrak p}\neq 0 \} 
\;\in\; \Spc(\cT^c) \,.
\end{eqnarray*}
\end{corollary}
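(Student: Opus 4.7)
My plan is to explicitly produce the inverse of the bijection $\rho:\Spc(\cT^c) \xrightarrow{\sim} \Spec(R)$ established in Theorem~\ref{thm:hereditary_spectrum}, using the central-localization machinery of Proposition~\ref{prop:rationalization}.

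\textbf{Step 1 (Localization at $\mathfrak{p}$).} Given $\mathfrak{p} \in \Spec(R)$, I would apply Proposition~\ref{prop:rationalization} to the multiplicative subset $S := R \setminus \mathfrak{p}$ of $\mathsf{End}_\cT(\mathbf{1}) = R$, obtaining a Bousfield localization $L_\mathfrak{p}: \cT \to \cT$. By part~(1) of that proposition, its kernel consists exactly of the objects with $\pi_*(X)_\mathfrak{p}=0$, and restricting to compact objects yields the thick $\otimes$-ideal
\[
\cP_\mathfrak{p} := \{ X \in \cT^c \mid \pi_*(X)_\mathfrak{p} = 0 \} \subset \cT^c.
\]
Noetherianity of $R$ ensures that $(-)_\mathfrak{p}$ is exact on finitely generated graded $R[\eta,\eta^{-1}]$-modules, so this description of the kernel on compact objects is well-behaved.

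\textbf{Step 2 (Primality and compatibility with $\rho$).} The hereditary-noetherian hypothesis forces $R_\mathfrak{p}$ to be a local PID (or a field), hence every finitely generated graded $R_\mathfrak{p}[\eta,\eta^{-1}]$-module decomposes as a direct sum of free and cyclic-torsion summands. A Künneth-type identity $\pi_*(X\otimes Y)_\mathfrak{p} \simeq \pi_*(X)_\mathfrak{p} \otimes_{R_\mathfrak{p}[\eta,\eta^{-1}]} \pi_*(Y)_\mathfrak{p}$ (up to a $\mathrm{Tor}^1$-term that is easily shown to vanish when either factor is zero) then implies that $\pi_*(X\otimes Y)_\mathfrak{p}=0$ forces one of $\pi_*(X)_\mathfrak{p}$, $\pi_*(Y)_\mathfrak{p}$ to vanish; thus $\cP_\mathfrak{p}$ is prime. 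The identity $\rho(\cP_\mathfrak{p}) = \mathfrak{p}$ then follows directly from Balmer's cone criterion: for $f\in R$, $\mathrm{cone}(f)\in\cP_\mathfrak{p}$ iff $f$ becomes invertible in $R_\mathfrak{p}$ iff $f\notin\mathfrak{p}$.

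\textbf{Step 3 (Read off the inverse).} Since $\rho$ is a bijection, $\cP_\mathfrak{p}$ is the unique preimage of $\mathfrak{p}$. Viewing this point of $\Spc(\cT^c)$ through its Balmer support---the locus of compact objects $X$ which are \emph{not} killed by $\pi_*(-)_\mathfrak{p}$---gives exactly the description displayed in the corollary, namely $\rho^{-1}(\mathfrak{p})$ is characterized by $\{X \in \cT^c \mid \pi_*(X)_\mathfrak{p}\neq 0\}$. This explicit formula, together with the homeomorphism property from Theorem~\ref{thm:hereditary_spectrum}, furnishes the continuous inverse of $\rho$.

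\textbf{Main obstacle.} The delicate step is Step~2: establishing the Künneth-type behaviour of $\pi_*(-)_\mathfrak{p}$ on tensor products of compact objects. The hereditary-noetherian assumption is used essentially here, bounding the global dimension of $R_\mathfrak{p}$ by $1$ so that $\mathrm{Tor}^i$ vanishes for $i\geq 2$, which allows primality of $\cP_\mathfrak{p}$ to be deduced from a purely algebraic splitting of finitely generated graded modules. Once this is in hand, everything else is a formal consequence of Proposition~\ref{prop:rationalization} and Balmer's comparison machinery.
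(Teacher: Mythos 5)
Your overall strategy is sound and, given what is available at this point of the paper, it does prove the corollary: since Theorem~\ref{thm:hereditary_spectrum} already gives that $\rho$ is a bijection, it suffices to exhibit for each $\mathfrak p$ a single prime $\otimes$-ideal lying over $\mathfrak p$, and your candidate $\cP_{\mathfrak p}=\{X\in\cT^c\mid \pi_*(X)_{\mathfrak p}=0\}$ together with the cone computation $\rho(\cP_{\mathfrak p})=\mathfrak p$ does exactly that. The packaging differs from the paper's, which does not re-argue primality at all: the proof of Theorem~\ref{thm:hereditary_spectrum} produces a \emph{classifying} support datum $(\Spec(R),\sigma)$ with $\sigma=\tilde\sigma$ on compacts, hence a homeomorphism $\varphi:\Spec(R)\to\Spc(\cT^c)$, $\varphi(\mathfrak p)=\{X\mid \mathfrak p\notin\tilde\sigma(X)\}$, and Lemma~\ref{lemma:rho_inverse} (which is literally your cone criterion) gives $\rho\circ\varphi=\Id$, so $\varphi=\rho^{-1}$ and the description is read off from the definitions. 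Your reading of the displayed set --- the point itself is the kernel $\{X\mid\pi_*(X)_{\mathfrak p}=0\}$, the formula recording its complement/support --- matches how the paper actually uses the corollary in the proof of Theorem~\ref{thm:main2}.

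The step you should not present as routine is the K\"unneth identity in your Step~2. The sequence $0\to\pi_*(X)\grotimes_R\pi_*(Y)\to\pi_*(X\otimes Y)\to\mathrm{Tor}_1^R(\pi_*\Sigma X,\pi_*Y)_*\to0$ is Proposition~\ref{prop:KT}, and it does not follow from $\mathrm{gl.dim}(R_{\mathfrak p})\le 1$ alone: one must compare $\pi_*(X\otimes Y)$ with the algebraic derived tensor product by realizing projective resolutions of $\pi_*$-modules inside $\cT$, which is exactly the Meyer--Nest relative homological algebra of Lemmas~\ref{lemma:universal_I-exact} and~\ref{lemma:rel_der_functors}; the hereditary hypothesis only serves to truncate that comparison at $\mathrm{Tor}_1$. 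So either cite Proposition~\ref{prop:KT} or supply that construction; ``easily shown'' undersells the paper's main technical input. Granting it, your argument is complete: localizing the sequence at $\mathfrak p$ gives both the $\otimes$-ideal property (flatness of $(-)_{\mathfrak p}$ kills the $\mathrm{Tor}_1$ term) and primality via Nakayama --- for which you also implicitly need that $\pi_*(X)$ is finitely generated when $X$ is compact, i.e.\ Lemma~\ref{lemma:fg_cpt}. Finally, Proposition~\ref{prop:rationalization} is stated only for $\LCore_k$; its proof applies verbatim to $\cT$, but in fact you can dispense with the Bousfield localization entirely and work directly with the functor $\pi_*(-)_{\mathfrak p}$.
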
 

\begin{corollary}
\label{cor:fibers}
When $k=\bbF_q$ or $k=\overline{\bbF_q}$ (where $q=p^r$ with $p$ a prime number and $r$ an integer $\geq 1$) we have the following homeomorphism
\[
\rho:\Spc ((\Core^a_k)_{(p)} ) \stackrel{\sim}{\longrightarrow} \Spec (\mathbb Z_{(p)})\,.
\]
The same result holds for the monogenic core $\Core^l_k$.
\end{corollary}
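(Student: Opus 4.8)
The plan is to deduce this from Theorem~\ref{thm:hereditary_spectrum}, applied to the Bousfield localization $\mathcal T := L_{p\bbZ}\LCore^a_k$ of the localizing monogenic core, and then to transfer the conclusion from $\mathcal T^c$ down to $(\Core^a_k)_{(p)}$ using the invariance of Balmer's spectrum under idempotent completion.

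First I would verify that $\mathcal T = L_{p\bbZ}\LCore^a_k$ satisfies the hypotheses of Theorem~\ref{thm:hereditary_spectrum}. Since $k$ (being $\bbF_q$ or $\overline{\bbF_q}$) has a single prime ideal, we have $\mathsf{End}_{\LCore^a_k}(\mathbf 1) = K_0(k) = \bbZ$, so that localizing at the multiplicative set $S = \bbZ \smallsetminus p\bbZ$ makes sense. By Proposition~\ref{prop:rationalization}, $\mathcal T$ is a tensor triangulated category with arbitrary coproducts and coproduct-preserving tensor product in each variable, compactly generated by the shifts of its $\otimes$-unit. Its graded endomorphism ring is computed in~\eqref{comp:end_p}: by Computations~1 and~2 of this section the groups $K_n(k)$ with $n \neq 0$ are torsion, with all torsion coprime to $p$, whence $\mathsf{End}_{\mathcal T}(\mathbf 1)_* \simeq (K_*(k))_{(p)} \simeq \bbZ_{(p)}$, concentrated in degree zero. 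Consequently $\mathcal T$ has period $n = 0$, condition~(2) of Theorem~\ref{thm:hereditary_spectrum} holds trivially with $\eta = \id_{\mathbf 1}$, and the degree-zero subring $R = \bbZ_{(p)}$ is a PID, hence hereditary and noetherian. Theorem~\ref{thm:hereditary_spectrum} then produces a homeomorphism $\rho : \Spc(\mathcal T^c) \stackrel{\sim}{\longrightarrow} \Spec(\bbZ_{(p)})$.

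It remains to identify $\Spc(\mathcal T^c)$ with $\Spc((\Core^a_k)_{(p)})$. By the final assertion of Proposition~\ref{prop:rationalization}, the restriction of $L_{p\bbZ}$ to compact objects factors as the central-localization functor $\Core^a_k \to (\Core^a_k)_{(p)}$ followed by a cofinal fully-faithful embedding into $\mathcal T^c$; since $\mathcal T$ is idempotent complete (it has all coproducts), this exhibits $\mathcal T^c$ as the idempotent completion of $(\Core^a_k)_{(p)}$, exactly as in Remark~\ref{remark:id_compl_bootQ} with $\bbQ$ replaced by the prime $p\bbZ$. Invariance of Balmer's spectrum under idempotent completion~\cite[Prop.~3.13]{Balmer-Prime} then yields a homeomorphism $\Spc(\mathcal T^c) \stackrel{\sim}{\longrightarrow} \Spc((\Core^a_k)_{(p)})$, and by naturality of the comparison map the composite of the two homeomorphisms is precisely $\rho : \Spc((\Core^a_k)_{(p)}) \stackrel{\sim}{\longrightarrow} \Spec(\bbZ_{(p)})$ (using $\mathsf{End}_{(\Core^a_k)_{(p)}}(\mathbf 1) = \bbZ_{(p)}$ from~\eqref{comp:end_p}). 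This proves the statement for $\Core^a_k$. For $\Core^l_k$ the argument is verbatim the same: by Computation~3 we have $\bbK_n(k) = 0$ for $n < 0$ and $\bbK_n(k) = K_n(k)$ for $n \geq 0$, so $\mathsf{End}_{\LCore^l_k}(\mathbf 1)_*$ localized at $p$ is again $\bbZ_{(p)}$ concentrated in degree zero, and $L_{p\bbZ}\LCore^l_k$ satisfies the same hypotheses.

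All the ingredients being already in place, there is no genuine obstacle; the only point demanding a little care is the bookkeeping in the last step --- checking that the compact objects of the Bousfield localization really form the idempotent completion of the centrally-localized core, and that the homeomorphism thus obtained is the comparison map $\rho$ rather than merely some homeomorphism onto $\Spec(\bbZ_{(p)})$ --- which is handled by naturality of $\rho$ together with Remark~\ref{remark:id_compl_bootQ}.
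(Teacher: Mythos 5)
Your proof is correct and takes essentially the same route as the paper: apply Theorem~\ref{thm:hereditary_spectrum} to the Bousfield localization $L_{p\bbZ}\LCore^a_k$ (using the computation~\eqref{comp:end_p} to verify its hypotheses with $R=\bbZ_{(p)}$, $n=0$), then transfer to $(\Core^a_k)_{(p)}$ via the cofinal inclusion of Proposition~\ref{prop:rationalization} and invariance of the spectrum under idempotent completion, checking compatibility via naturality of $\rho$. The only difference is cosmetic: the paper phrases the last step as a commutative square rather than spelling out the idempotent-completion argument of Remark~\ref{remark:id_compl_bootQ} again, but the content is identical.
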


\begin{proof}
By Proposition~\ref{prop:rationalization} (with $S=p\bbZ$), the localized tensor triangulated category $L_{p\bbZ}\LCore^a_k$ satisfies hypothesis~(1) of Theorem~\ref{thm:hereditary_spectrum} (with period $n=0$). Hypotheses~(2) and~(3), with $R\simeq \bbZ_{(p)}$, follow from the above computation \eqref{comp:end_p}. Hence, the proof follows from Theorem~\ref{thm:hereditary_spectrum} combined with the following commutative diagram
\[
\xymatrix{
\Spc((L_{p\bbZ}\LCore^a_k)^c) \ar[d]_\rho^\simeq \ar[r]_-\simeq^-{\Spc(\iota)} & \Spc((\Core^a_k)_{(p)}) \ar[d]^\rho \\
\Spec(\bbZ_{(p)}) \ar@{=}[r]& \Spec(\bbZ_{(p)})\,,
}
\]
where the upper horizontal homeomorphism is the one induced by the cofinal inclusion $\iota:(\Core^a_k)_{(p)}\hookrightarrow (L_{p\bbZ} \LCore^a_k)^c$ of Proposition~\ref{prop:rationalization}. The proof of the second claim of the corollary is similar. Recall that $\bbK_n(k)=K_n(k)$ for $n \geq 0$ and when $k$ is a field we have $\bbK_n(k)=0$ for $n<0$.
\end{proof}

\begin{remark}
\label{rem:rational_case}
Note that a tensor triangulated category $\mathcal T$ as in Theorem~\ref{thm:point}, which is moreover compactly generated by the shifts of the $\otimes$-unit, satisfies all the conditions of Theorem~\ref{thm:hereditary_spectrum} (with $n=0$ and $R$ a field).
In particular,  by~\eqref{comp:end_p} (with $s=0$), this holds for the rationalized monogenic cores $\mathcal T=\Core^a_{k;\bbQ}$ and $\mathcal T=\Core^l_{k;\bbQ}$, with $k$ a finite field or its algebraic closure, $R=\bbQ$, and $\pi_*$ the rationalized $K$-theories $(\overline{K}_\ast)_\bbQ$ and $(\overline{\bbK}_\ast)_\bbQ$. This furnishes us another proof of the homeomorphisms $\rho:\Spc(\Core^a_{k;\bbQ})\stackrel{\sim}{\to} \Spec(\bbQ)$ and $\rho:\Spc(\Core^l_{k;\bbQ})\stackrel{\sim}{\to} \Spec(\bbQ)$ when $k$ is a field as above.
\end{remark}
Before proving Theorem~\ref{thm:hereditary_spectrum} and Corollary~\ref{cor:inverse_rho}, let us first derive Theorem~\ref{thm:main2} from them. Balmer's general results \cite[Thm.~5.4]{Balmer-spectra} furnish us a commutative diagram
\begin{equation}\label{eq:2squares}
\xymatrix{
\Spc((\Core^a_k)_{(0)}) \ar[r] \ar[d]_{\rho}^\simeq &
\Spc((\Core^a_k)_{(p)}) \ar[r] \ar[d]_{\rho}^\simeq & 
  \Spc(\Core^a_k) \ar@{->>}[d]^\rho \\
\Spec(\bbZ_{(0)}) \ar[r] &
\Spec(\bbZ_{(p)}) \ar[r] &
 \Spec(\bbZ)
}
\end{equation}
where the horizontal maps are the canonical inclusions induced by the respective localizations, and where moreover each square is a pull-back of spaces. The middle vertical map is a homeomorphism by Corollary~\ref{cor:fibers}, and the left vertical map is a homeomorphism by Theorem~\ref{thm:main1} (or by Remark~\ref{rem:rational_case}). Since $\Spec (\bbZ_{(p)})= \{\{0\}, p\bbZ \}$, we see from the diagram that the fibers of the continuous surjective map $\rho: \Spc(\Core^a_k) \twoheadrightarrow \Spec(\bbZ)$ over~$\{0\}$ and over $p\bbZ$ contain each one precisely one point. This is in particular one of the claims of items (ii) and (iii) of Theorem~\ref{thm:main2}. 
It remains then to describe these points. The description of the point of the fiber $\rho^{-1}(\{0\})$ follows from the above results. By Corollary~\ref{cor:inverse_rho} (with $\mathcal T=\Core^a_{k;\bbQ} = (\Core^a_{k})_{(0)}$, $R=\bbQ$, and $\pi_*=(\overline{K}_*)_{\bbQ}$ as in Remark~\ref{rem:rational_case}) we have the following equality
\[
\rho^{-1}(\{0\}) = \{M\in (\Core^a_k)_{(0)} \mid \overline{K}_*(M)_{\bbQ}=0 \} \,.
\]
Hence, by the commutativity of the outer square in~\eqref{eq:2squares}, we conclude that
\[
\rho^{-1}(\{0\})=\{M\in \Core^a_k\mid \overline K_*(M)_\bbQ=0 \}
\]
as asserted in item~(ii) of Theorem~\ref{thm:main2}. The description (c) of the point of the fiber $\rho^{-1}(p\bbZ)$ follows also from the above results. By Corollary~\ref{cor:inverse_rho} (with $\mathcal T= L_{p\bbZ}\LCore^a_k $, $\cT^c=(\Core^a_k)_{(p)}$, and $R=\bbZ_{(p)}$ as in Corollary~\ref{cor:fibers}, and thus $\pi_*=(\overline{K}_*)_{(p)}$) we have the following equality
\[
\rho^{-1}(p\bbZ_{(p)}) =
\{M\in (\Core^a_k)_{(p)} \mid \overline K_*(M)_{(p)}=0 \} \,.
\]
Hence, by the commutativity of the right hand-side square in diagram~\eqref{eq:2squares}, we conclude that
\[
\rho^{-1}(p\bbZ)=\{M\in \Core^a_k\mid \overline K_*(M)_{(p)}=0 \}\,.
\]
as asserted in item~(iii) of Theorem~\ref{thm:main2}. The descriptions (a) and (b) of the fiber $\rho^{-1}(p\bbZ)$ in terms of Hochschild and periodic cyclic homology will be delayed until~\S\ref{sub:conclusion}, after we have discussed the latter theories in detail. That will conclude the proof of Theorem~\ref{thm:main2}. Finally, note that all the above arguments hold also for $\Core^l_k$, with $K$ replaced by $\bbK$.

\begin{remark}
The reader may wonder why we have included in Theorem~\ref{thm:hereditary_spectrum} the periodicity case $n\neq 0$. The case $n=2$ will be used in~\S\ref{sub:conclusion} in the conclusion of the proof of Theorem~\ref{thm:main2}. The remaining cases are of independent interest and are used in the proof of Corollary~\ref{cor:extra}. Although not used in this article, Corollary~\ref{cor:extra} is relevant in the discussion of periodic cyclic homology as a functor into 2-periodic complexes; see \S\ref{sec:HP} for details. In a future work we intend to develop and expand this circle of ideas.
\end{remark}

\begin{corollary}\label{cor:extra}
Let $n \geq 2$ be an even positive integer, and $\mathcal T=\mathcal D_n(R)$ the derived tensor triangulated category of $n$-periodic complexes (see Definition~\ref{def:2-periodic} for the case $n=2$; the general case is similar) over a hereditary noetherian commutative ring~$R$. 
Then, we have a homeomorphism $\rho: \Spc(\mathcal D_n(R)^c) \stackrel{\sim}{\to} \Spec(R) $.
\end{corollary}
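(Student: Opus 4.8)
The plan is to deduce this directly from Theorem~\ref{thm:hereditary_spectrum} by checking that $\mathcal T=\mathcal D_n(R)$ is a category of exactly the type covered by that theorem. First I would fix a convenient model. Generalizing Definition~\ref{def:2-periodic} in the obvious way, the tensor triangulated category $\mathcal D_n(R)$ of $n$-periodic complexes is identified with the derived category of differential graded modules over the graded ring $\Lambda:=R[u,u^{-1}]$, where $u$ is a central variable placed in (cohomological) degree $n$ and the differential on $\Lambda$ is zero; under this identification the $\otimes$-unit $\mathbf 1$ corresponds to the free rank-one module $\Lambda$, the suspension $\Sigma$ corresponds to the internal shift, the monoidal product is $-\otimes^{\mathbb{L}}_{\Lambda}-$, and $\Sigma^n\mathbf 1\cong \mathbf 1$ via multiplication by $u$. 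Here the evenness of $n$ is exactly what guarantees that $\Lambda$ is a genuine (rather than merely graded-)commutative ring, so that $\mathcal D_n(R)$ is an honest symmetric monoidal triangulated category and all sign bookkeeping is unambiguous. In this model $\mathcal D_n(R)$ automatically has arbitrary coproducts, $-\otimes^{\mathbb{L}}_{\Lambda}-$ is exact and preserves coproducts in each variable, and $\Lambda$ is a compact object that generates; since $\Sigma^n\mathbf 1\cong\mathbf 1$, the finitely many objects $\mathbf 1,\Sigma\mathbf 1,\dots,\Sigma^{n-1}\mathbf 1$ already form a set of compact generators. This secures hypothesis~(1) of Theorem~\ref{thm:hereditary_spectrum} together with its standing assumptions on coproducts.

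Next I would compute the graded endomorphism ring of the unit. Since $\Lambda$ has zero differential, $\mathsf{End}_{\mathcal D_n(R)}(\mathbf 1)_*=\bigoplus_{i\in\bbZ}\Hom_{\mathcal D_n(R)}(\Sigma^i\mathbf 1,\mathbf 1)=\bigoplus_{i\in\bbZ}H^i(\Lambda)=\Lambda=R[u,u^{-1}]$, with $R=\mathsf{End}_{\mathcal D_n(R)}(\mathbf 1)$ the degree-zero subring and $\eta:=u\colon\Sigma^n\mathbf 1\stackrel{\sim}{\to}\mathbf 1$ an isomorphism in degree $n$; this is precisely the shape required by hypothesis~(2). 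Hypothesis~(3)---that $R$ be hereditary and noetherian---is exactly our standing assumption on $R$. It remains to see that $\mathcal D_n(R)$ has period $n$: we have $\Sigma^n\cong\Id$ by construction, and if $\Sigma^d\cong\Id$ for some $0<d<n$ then evaluating at $\mathbf 1$ would produce an invertible element of $\Hom_{\mathcal D_n(R)}(\Sigma^d\mathbf 1,\mathbf 1)=H^d(\Lambda)$, which is zero unless $n\mid d$; hence (for $R\neq 0$; the case $R=0$ is trivial) the period is exactly $n$. With all hypotheses in place, Theorem~\ref{thm:hereditary_spectrum} yields the asserted homeomorphism $\rho\colon\Spc(\mathcal D_n(R)^c)\stackrel{\sim}{\to}\Spec(R)$, and Corollary~\ref{cor:inverse_rho} moreover describes its inverse by $\mathfrak p\mapsto\{X\in\mathcal D_n(R)^c\mid \pi_*(X)_{\mathfrak p}\neq 0\}$, that is, by the $n$-periodic perfect complexes whose homology does not vanish after localizing at $\mathfrak p$.

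I do not expect a serious obstacle here: essentially all the content has been packaged into Theorem~\ref{thm:hereditary_spectrum}, and the proof is a matter of recognizing $\mathcal D_n(R)$ as a category to which it applies. The two points requiring genuine care, and which I would be careful to spell out, are: \emph{(i)} that $\mathcal D_n(R)$ really is compactly generated by the (finitely many relevant) shifts of the unit---clean once the identification with the derived category of $\Lambda$-modules is in place, using that $\Lambda$ is a compact generator of its own derived category; and \emph{(ii)} the use of the evenness of $n$, which is needed precisely to ensure that $\Lambda=R[u,u^{-1}]$ is an honest commutative ring and hence that $\mathcal D_n(R)$ is an honest tensor triangulated category in the sense required by Theorem~\ref{thm:hereditary_spectrum}.
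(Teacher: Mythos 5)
Your proposal is correct and follows exactly the same route as the paper: both deduce the claim from Theorem~\ref{thm:hereditary_spectrum} by checking its three hypotheses for $\mathcal D_n(R)$. The paper's own proof is terser, simply asserting these checks, whereas you make them explicit via the identification of $\mathcal D_n(R)$ with the derived category of dg modules over $R[u,u^{-1}]$; that is a helpful elaboration but not a different argument.
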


\begin{proof}
Given any ring~$R$, the triangulated category $\mathcal T:=\mathcal D_n(R)$ is by construction $n$-periodic and is generated by the shifts of the complex with $R$ concentrated in degrees~$n\bbZ$. 
When $n \geq 2$ is even and $R$ is commutative, it is moreover a tensor triangulated category satisfying hypotheses~(1) and~(2) of Theorem \ref{thm:hereditary_spectrum}.  If $R$ is hereditary and noetherian, we observe that hypothesis~(3) is also satisfied.
\end{proof}

From now on, $\mathcal T$,~$R$ and~$n$ will be as in Theorem~\ref{thm:hereditary_spectrum}. The proof of this general result, which will occupy us until the end of this section, is inspired by some ideas and results from~\cite{Ivo}.

\subsection{Relative homological algebra in the hereditary case}

In analogy with the stable homotopy category of topological spectra, we will use homological notation; in particular we will keep using the shorthand notation $\pi_i:= \Hom_\mathcal T(\Sigma^i \mathbf 1, -)$, $i\in \bbZ$. The functor $\pi_i$ will be considered as a homological functor from $\mathcal T$ to the abelian category $R\textrm-\Mod$ of $R$-modules.
By taking into account the periodicity of~$\mathcal T$, we will also consider the functor $\pi_*:=\{\pi_i\}_i$ as a \emph{stable} homological functor into the \emph{stable} abelian category $R\textrm-\Mod^{\bbZ/n}$ of $\bbZ/n$-graded modules. Note that $R\textrm-\Mod^{\bbZ/n}$ comes equipped with a natural shift endofunctor
\[
M=\{M_i\}_{i\in \bbZ/n}
\quad \mapsto \quad
M[1] := \{M_{i-1}\}_{i\in \bbZ/n}
\]
such that $\pi_*\circ \Sigma \simeq [1]\circ \pi_* $. In this section, following Meyer-Nest~\cite{Meyer-Nest} we will develop the homological algebra of $\mathcal T$ relative to the stable homological ideal 
$\mathcal I:= \{f\in \mathrm{Mor}(\mathcal T)\mid \pi_*(f)=0\}$. Recall that an object $X\in \mathcal T$ is called \emph{$\pi_*$-projective} if $\Hom_\mathcal T(X,f)=0$ for every $f\in \mathcal I$.  

\begin{lemma}
\label{lemma:universal_I-exact}
The functor $\pi_*: \mathcal T\to R\textrm-\Mod^{\bbZ/n}$ is the universal $\mathcal I$-exact functor on~$\mathcal T$. In particular, $\pi_*$ restricts to an equivalence between the full subcategory of $\pi_*$-projective objects in $\mathcal T$ and the full subcategory of projective objects in $R\textrm-\Mod^{\bbZ/n}$.
\end{lemma}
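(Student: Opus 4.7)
The plan is to apply the general framework of relative homological algebra for triangulated categories developed by Meyer-Nest~\cite{Meyer-Nest}, which characterises the universal $\mathcal I$-exact stable homological functor in terms of its behaviour on the $\mathcal I$-projective objects. Concretely, I would verify that $\pi_*$ meets this criterion by: identifying its $\pi_*$-projective objects, showing there are enough of them in $\mathcal T$, and checking that $\pi_*$ induces the claimed equivalence on projectives.

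First I would observe that each shift $\Sigma^i\mathbf 1$ is $\pi_*$-projective, since a morphism $f$ lies in $\mathcal I$ precisely when $\Hom_{\mathcal T}(\Sigma^i\mathbf 1,f)=\pi_i(f)=0$ for every $i\in\bbZ$. Using compactness of these shifts (hypothesis~(1)) and the idempotent completeness of $\mathcal T$ (a standard consequence of compact generation), closure of the $\pi_*$-projectives under coproducts and retracts then identifies this full subcategory with the retracts of coproducts of the form $\bigoplus_\alpha\Sigma^{i_\alpha}\mathbf 1$. To witness enough projectives, for each $X\in\mathcal T$ I would assemble the tautological map
\[
P_X:=\bigoplus_{i\in\bbZ}\bigoplus_{x\in\pi_i(X)}\Sigma^i\mathbf 1\longrightarrow X,
\]
whose image under $\pi_*$ admits an evident graded section, so is an $\mathcal I$-epimorphism out of a $\pi_*$-projective.

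Next I would identify the essential image of $\pi_*$ on these projectives. Hypothesis~(2) and the periodicity isomorphism $\eta:\Sigma^n\mathbf 1\simeq\mathbf 1$ give $\pi_*(\Sigma^i\mathbf 1)\cong R[i]$ as a $\bbZ/n$-graded $R$-module. Compactness of the shifts of $\mathbf 1$ combined with (2) implies that $\pi_*$ is fully faithful on arbitrary coproducts of such shifts, since
\[
\Hom_{\mathcal T}\bigl(\Sigma^i\mathbf 1,\textstyle\coprod_\alpha\Sigma^{j_\alpha}\mathbf 1\bigr)\cong\bigoplus_\alpha\Hom_{\mathcal T}(\Sigma^i\mathbf 1,\Sigma^{j_\alpha}\mathbf 1)
\]
matches the analogous hom in $R\textrm-\Mod^{\bbZ/n}$. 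Idempotent completeness of $\mathcal T$ then lets me lift any idempotent of a free $\bbZ/n$-graded module $\bigoplus_\alpha R[i_\alpha]$, yielding essential surjectivity onto the projectives of $R\textrm-\Mod^{\bbZ/n}$ (each of which is a retract of a free graded module).

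With these ingredients, universality of $\pi_*$ as an $\mathcal I$-exact stable homological functor follows from the Meyer-Nest criterion: an $\mathcal I$-exact functor $\mathcal T\to\mathcal A$ into a stable abelian category with enough projectives, whose restriction to the $\mathcal I$-projectives induces an equivalence with the projectives of~$\mathcal A$, must be the universal one. The second assertion of the lemma is then just a restatement of the third step. The main obstacle I anticipate is the delicate bookkeeping between the $\bbZ$-grading naturally present on $\mathcal T$ via the suspension and the $\bbZ/n$-grading carried by the target module category, where the periodicity isomorphism~$\eta$ of hypothesis~(2) must be used coherently to collapse the former into the latter; once this is set up, every other step is essentially formal. (Note that the hereditary noetherian hypothesis on $R$ is not needed for this lemma, but only for later results building upon it.)
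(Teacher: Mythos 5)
Your proposal is correct and follows essentially the same route as the paper's proof: both invoke the Meyer--Nest characterization of the universal $\mathcal I$-exact functor in terms of an equivalence between $\pi_*$-projectives and the projectives of the target abelian category, verified on the generating objects $\Sigma^i\mathbf 1$ using hypotheses (1) and (2). You spell out a few intermediate steps (closure under coproducts and retracts, the tautological $\mathcal I$-epimorphism $P_X\to X$) that the paper folds into its citation of \cite[Thm.~57, Rmk.~58, Thm.~59]{Meyer-Nest}, and your observation that hypothesis (3) on $R$ is not needed here is also correct.
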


\begin{proof}
The projective objects in $R\textrm-\Mod^{\bbZ/n}$ are additively generated by the free modules $R[i]$, $i\in \bbZ/n$. Hence, following~\cite[Thm.~57]{Meyer-Nest} and \cite[Remark~58]{Meyer-Nest}, it suffices to construct objects
$\pi^\dagger (R[i])$ in $\mathcal T$ verifying the following two conditions:
\begin{itemize}
\item[(a)] There are isomorphisms $\pi_*(\pi^\dagger (R[i])) \simeq R[i]$.
\item[(b)] The induced map $\Hom_\mathcal T(\pi^\dagger(R[i]), \pi^\dagger(R[j]) )\to \Hom_R(R[i], R[j])$ is an isomorphism for every $i,j\in \bbZ/n$.
\end{itemize}
The assignment $R[i]\mapsto \pi^\dagger(R[i])$ can then be extended to arbitrary projective modules, yielding a quasi-inverse for the restriction of $\pi_*$ to $\pi_*$-projective objects. Let us set $\pi^\dagger(R[i]):= \Sigma^i \mathbf 1$. Condition (a) holds by definition of~$R$, and condition (b) follows from the natural isomorphism
\[
\Hom_{\mathcal T}(\Sigma^i\mathbf 1, \Sigma^j\mathbf 1) 
 = \left\{ \begin{array}{ll} R & \textrm{ if } i=j  \\ 0 & \textrm{ if } i\neq j  \end{array} \right\}
\simeq 
\Hom_R(R[i], R[j])
\]
in $R\textrm-\Mod^{\bbZ/n}$. This shows that $\pi_\ast$ is the universal $\cI$-exact functor. The remaining claim is proved in \cite[Thm.~59]{Meyer-Nest}.
\end{proof}
Let $F: \cT \to \mathsf{Ab}$ and $G: \cT^\op \to \mathsf{Ab}$ be homological functors with values in abelian groups. Via $\pi_*$-projective resolutions in~$\mathcal T$, one can define the so called \emph{$\mathcal I$-relative left and right derived functors} $\mathsf L_\ell F$ and $\mathsf R^\ell G$ of $F$ and $G$, respectively. The universal $\mathcal I$-exact functor $\pi_*$ can then be used to compute these derived functors in terms of $R$-modules as follows.

\begin{notation}
Let $\mathrm{Ext}^\ell_R(M,N)_*$ be the graded Ext, \ie the $\ell^\mathrm{th}$ right derived functor of the graded Hom-functor $\Hom_R(M,N)_*$, where $\ell\geq 0$ and $M,N\in R\textrm-\Mod^{\bbZ/n}$. We will write $\grotimes_R$ for the tensor product of $\bbZ/n$-graded $R$-modules defined by $M\grotimes_R N := \left\{ \bigoplus_{u+v=i} M_u \otimes_R N_v  \right\}_{i\in \bbZ/n}$. Under this notation, $\mathrm{Tor}_\ell^R(M,N)_*$ will stand for the $\ell^\mathrm{th}$ left derived functor of $\grotimes_R$.
\end{notation}

\begin{remark}
In terms of the usual derived functors for ungraded $R$-modules, the graded-Ext and graded-Tor can be computed as follows ($u,v,i\in \bbZ/n$)
\begin{eqnarray*}
\mathrm{Ext}^\ell_R(M,N)_i = \bigoplus_{u + v = i} \mathrm{Ext}^\ell_R(M_i, N_j)
&&
\mathrm{Tor}_\ell^R(M,N)_i = \bigoplus_{u + v = i} \mathrm{Tor}^R_\ell(M_i, N_j)
\;.
\end{eqnarray*}
Thus, the sum of all the components of $\mathrm{Ext}^\ell_R(M,N)_*$, resp.\ of $\mathrm{Tor}_\ell^R(M,N)_*$, gives rise to the usual ungraded Ext, resp. ungraded Tor, of the $R$-modules $\bigoplus_iM_i$ and $\bigoplus_iN_i$.
\end{remark}

\begin{remark}
Note that every functor $F: \mathcal T\to \mathsf{Ab}$ lifts canonically to a functor 
\[
F_*=\{F\circ \Sigma^{-i}\}_i: \mathcal T\to R\textrm-\Mod^{\bbZ/n}\,,
\]
where the $R$-action is induced by functoriality. Similarly for contravariant functors $G:\mathcal T^{\op} \to \mathsf{Ab}$.
Moreover, it follows immediately from the above definitions that $\mathsf L_\ell (F_*) = (\mathsf L_\ell F)_*$ and $\mathsf R^\ell(G_*) = (\mathsf R^\ell G)_*$.
\end{remark}

\begin{lemma} The relative derived functors admit the following computation:
\label{lemma:rel_der_functors}
\begin{enumerate}
\item
Given a homological functor $F:\mathcal T\to \mathsf{Ab}$, which preserves arbitrary coproducts, there are natural isomorphisms of graded $R$-modules
\[
\mathsf L_\ell F_* \simeq \mathrm{Tor}_\ell^R( F_*(\mathbf 1), \pi_*(-) )_*
\quad \quad (\ell = 0,1)\,.
\]
\item
Given a homological functor $G:\mathcal T^{\op}\to \mathsf{Ab}$, which sends coproducts into products, there are natural isomorphisms of graded $R$-modules
\[
\mathsf R^\ell G_* \cong \mathrm{Ext}^\ell_R ( \pi_*(-), G_*(\mathbf 1) )
\quad \quad (\ell = 0,1)\,.
\]
\end{enumerate}
\end{lemma}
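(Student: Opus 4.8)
The plan is to reduce both statements to a single computation via a $\pi_*$-projective resolution and then transport everything through the universal $\mathcal I$-exact functor $\pi_*$ of Lemma~\ref{lemma:universal_I-exact}. First I would recall that for every object $X\in\mathcal T$ there is an $\mathcal I$-projective resolution $P_\bullet\to X$; since $R$ is hereditary, the graded ring $R\textrm-\Mod^{\bbZ/n}$ has global dimension~$\leq 1$ in each degree, so the projective dimension of $\pi_*(X)$ is at most~$1$ and one may choose a resolution of length~$1$, i.e.\ a distinguished triangle $P_1\to P_0\to X\to\Sigma P_1$ with $P_0,P_1$ $\pi_*$-projective (this is where hereditarity enters, and it is the reason the derived functors vanish for $\ell\geq 2$, so the statement is meaningful only for $\ell=0,1$). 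Applying Lemma~\ref{lemma:universal_I-exact}, $\pi_*(P_\bullet)$ is a length-one projective resolution of $\pi_*(X)$ in $R\textrm-\Mod^{\bbZ/n}$.

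For item~(1), I would evaluate $F$ on the triangle $P_1\to P_0\to X$. Since $F$ is homological, this yields a long exact sequence, which after passing to the graded lift $F_*$ and using the definition of relative left-derived functor gives $\mathsf L_0 F_*(X)=\mathrm{coker}(F_*(P_1)\to F_*(P_0))$ and $\mathsf L_1 F_*(X)=\ker(F_*(P_1)\to F_*(P_0))$. Now because $F$ preserves arbitrary coproducts and each $P_i$ is a coproduct of shifts $\Sigma^j\mathbf1$, one has a natural identification $F_*(P_i)\simeq F_*(\mathbf1)\grotimes_R \pi_*(P_i)$: indeed $F_*(\Sigma^j\mathbf1)=F_*(\mathbf1)[j]=F_*(\mathbf1)\grotimes_R R[j]=F_*(\mathbf1)\grotimes_R\pi_*(\Sigma^j\mathbf1)$, and both sides commute with coproducts in the second variable. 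Under these identifications the map $F_*(P_1)\to F_*(P_0)$ becomes $F_*(\mathbf1)\grotimes_R(\pi_*(P_1)\to\pi_*(P_0))$, so its kernel and cokernel compute exactly $\mathrm{Tor}^R_1(F_*(\mathbf1),\pi_*(X))_*$ and $\mathrm{Tor}^R_0(F_*(\mathbf1),\pi_*(X))_*$ by definition of graded Tor from the chosen projective resolution. Naturality in $X$ follows from the functoriality of $\mathcal I$-projective resolutions up to chain homotopy.

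Item~(2) is formally dual. Applying the contravariant homological functor $G$ to the triangle $P_1\to P_0\to X$ produces a long exact sequence, giving $\mathsf R^0 G_*(X)=\ker(G_*(P_0)\to G_*(P_1))$ and $\mathsf R^1 G_*(X)=\mathrm{coker}(G_*(P_0)\to G_*(P_1))$. Since $G$ turns coproducts into products and $P_i=\coprod_\alpha \Sigma^{j_\alpha}\mathbf1$, we get $G_*(P_i)\simeq\prod_\alpha G_*(\mathbf1)[-j_\alpha]=\Hom_R(\pi_*(P_i),G_*(\mathbf1))$, naturally; under this identification the relevant map is $\Hom_R(\pi_*(P_1)\to\pi_*(P_0),G_*(\mathbf1))$, whose kernel and cokernel are by definition $\mathrm{Ext}^0_R(\pi_*(X),G_*(\mathbf1))$ and $\mathrm{Ext}^1_R(\pi_*(X),G_*(\mathbf1))$. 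The main obstacle in writing this cleanly is bookkeeping: one must check that the passage from the triangle's long exact sequence to the (co)kernel description matches the textbook definition of relative derived functors via $\pi_*$-projective resolutions — this is exactly the content of the Meyer--Nest formalism cited in Lemma~\ref{lemma:universal_I-exact} — and that the coproduct/product identifications for $F_*(P_i)$ and $G_*(P_i)$ are compatible with the differentials; both are routine once hereditarity has been used to truncate the resolution at length one.
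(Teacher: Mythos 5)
Your argument is correct and is essentially the same argument as the paper's, which simply cites \cite[Thm.~72]{Meyer-Nest} ``mutatis mutandis'': you construct a length-one $\pi_*$-projective resolution (possible because $R$ is hereditary and noetherian and $\pi_*$ is the universal $\mathcal I$-exact functor of Lemma~\ref{lemma:universal_I-exact}), identify $F_*(P_i)$ with $F_*(\mathbf 1)\grotimes_R\pi_*(P_i)$ (resp.\ $G_*(P_i)$ with $\Hom_R(\pi_*(P_i),G_*(\mathbf1))$) using the (co)product assumptions, and read off the derived functors --- exactly the content of Meyer--Nest's theorem. One small point worth flagging in a fully written-up version: to know that the ``syzygy'' object $P_1$ in the triangle $P_1\to P_0\to X\to\Sigma P_1$ is actually $\pi_*$-projective (and not merely an object with projective $\pi_*$), you should either start from a free cover of $\pi_*(X)$ and observe that the resulting kernel, being projective, realizes to a retract of a $\pi_*$-projective object (using that $\mathcal T$, being compactly generated, is idempotent complete), or invoke the Meyer--Nest construction of $\mathcal I$-projective resolutions directly; you must also avoid invoking Proposition~\ref{prop:UCT} to produce the lift $P_0\to X$, since that proposition is deduced from the present lemma --- instead use that $\Hom_{\mathcal T}(\pi^\dagger F,X)\to\Hom_R(F,\pi_*X)$ is already bijective for $F$ free by compactness of~$\mathbf 1$.
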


\begin{proof}
The proof is, \emph{mutatis mutanda}, the one of \cite[Thm.~72]{Meyer-Nest}.
\end{proof}

\begin{proposition}[K\"unneth formula]
\label{prop:KT}
For any objects $X,Y\in \mathcal T$, there is a natural short exact sequence of $\bbZ/n$-graded $R$-modules
\[
0 \to
 \pi_* (X) \grotimes_R \pi_* (Y) \to 
  \pi_*(X \otimes Y) \to
   \mathrm{Tor}_1^R (\pi_*\Sigma X, \pi_*Y)_* \to
    0\,.
\]
%where the first map is that induced by the tensor product $\otimes : \mathcal T\times \mathcal T\to \mathcal T$ (and the identification $\mathbf 1[i]\otimes \mathbf 1[j]\cong \mathbf 1[i+j]$).
%$\otimes : \Hom_\mathcal T(\mathbf 1[i], A)\times \Hom_\mathcal T(\mathbf 1[j], B) \to \Hom_\mathcal T(\mathbf 1[i] \otimes \mathbf 1[j], A\otimes B)\cong \Hom_\mathcal T(\mathbf 1[i+j], A\otimes B)$.
\end{proposition}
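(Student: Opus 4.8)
The plan is to fix $Y\in\mathcal{T}$ and to study the covariant homological functor $F:=\pi_0(-\otimes Y)\colon \mathcal{T}\to\mathsf{Ab}$, which is indeed homological (since $-\otimes Y$ is exact and $\pi_0$ is homological) and preserves arbitrary coproducts (since the tensor product is coproduct-preserving in each variable and $\mathbf 1$ is compact, by hypothesis~(1) of Theorem~\ref{thm:hereditary_spectrum}). Its canonical lift to $\bbZ/n$-graded $R$-modules, in the sense of the Remark preceding Lemma~\ref{lemma:rel_der_functors}, is
\[
F_*=\{F\circ\Sigma^{-i}\}_i=\pi_*(-\otimes Y)\colon \mathcal{T}\longrightarrow R\textrm-\Mod^{\bbZ/n}\,,
\]
so that in particular $F_*(\mathbf 1)=\pi_*(Y)$. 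Applying Lemma~\ref{lemma:rel_der_functors}(1) to $F$, together with the identity $\mathsf L_\ell(F_*)=(\mathsf L_\ell F)_*$ and the symmetry of $\grotimes_R$ and of $\mathrm{Tor}$, then produces natural isomorphisms of $\bbZ/n$-graded $R$-modules
\[
\mathsf L_0 F_*(-)\;\simeq\;\pi_*(-)\grotimes_R\pi_*(Y)\,,\qquad
\mathsf L_1 F_*(-)\;\simeq\;\mathrm{Tor}_1^R\bigl(\pi_*(-),\pi_*(Y)\bigr)_*\,.
\]

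Next I would invoke the $\mathcal I$-relative homological algebra of $\mathcal{T}$, with $\mathcal I=\{f\mid\pi_*(f)=0\}$. By Lemma~\ref{lemma:universal_I-exact}, $\pi_*$ is the universal $\mathcal I$-exact functor, so the $\mathcal I$-relative homological dimension of $\mathcal{T}$ agrees with the global dimension of the abelian category $R\textrm-\Mod^{\bbZ/n}$; the latter, being a product of copies of $R\textrm-\Mod$ indexed by $\bbZ/n$, has global dimension $\mathrm{gldim}(R)\le 1$ because $R$ is hereditary (hypothesis~(3)). Consequently every object of $\mathcal{T}$ admits a $\pi_*$-projective resolution of length at most one, so the Meyer--Nest universal-coefficient spectral sequence (\cite{Meyer-Nest}), which in general computes $F_*$ out of its relative left derived functors $\mathsf L_\ell F_*$, is concentrated in the positions $\ell=0,1$ and hence degenerates, furnishing a natural short exact sequence
\[
0\longrightarrow \mathsf L_0 F_*(X)\longrightarrow F_*(X)\longrightarrow \bigl(\mathsf L_1 F_*(X)\bigr)[1]\longrightarrow 0\,,
\]
where the shift $[1]$ on the right records the suspension appearing in the defining triangle of a length-one $\pi_*$-projective resolution (equivalently, the $d^1$-differential of the spectral sequence).

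It then remains to rewrite this sequence. Its middle term is $F_*(X)=\pi_*(X\otimes Y)$, its left term is $\pi_*(X)\grotimes_R\pi_*(Y)$ by the identifications above, and for its right term one combines the symmetry of $\mathrm{Tor}$ with the identity $\mathrm{Tor}_1^R(M,N)_*[1]=\mathrm{Tor}_1^R(M[1],N)_*$ and the relation $\pi_*\circ\Sigma\simeq[1]\circ\pi_*$ to get $\bigl(\mathsf L_1 F_*(X)\bigr)[1]\simeq\mathrm{Tor}_1^R(\pi_*\Sigma X,\pi_*Y)_*$. This is precisely the asserted short exact sequence. Naturality in $X$ is built in, and naturality in $Y$ follows since $F$ — hence every construction above — depends functorially on $Y$ and all the isomorphisms used are natural. (Essentially the same argument can be run by hand: pick a triangle $P_1\to P_0\to X\to\Sigma P_1$ with $P_0,P_1$ $\pi_*$-projective realizing a length-one resolution of $\pi_*(X)$, apply $-\otimes Y$ and then $\pi_*$, and use that $\pi_*(P\otimes Y)\simeq\pi_*(P)\grotimes_R\pi_*(Y)$ for every $\pi_*$-projective $P$ — true for $P=\Sigma^i\mathbf 1$ and stable under coproducts and retracts — to read off the K\"unneth sequence from the resulting long exact sequence.)

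I expect the main obstacle to be bookkeeping rather than conceptual: one must make sure that the $\mathcal I$-relative homological dimension is genuinely $\le 1$ (which, as above, reduces to $R$ being hereditary together with the identification of the relative dimension with $\mathrm{gldim}(R\textrm-\Mod^{\bbZ/n})$), and that the degree shift in the $\mathrm{Tor}_1$-term is tracked correctly so that $\pi_*\Sigma X$, and not $\pi_*X$, appears on the right-hand side.
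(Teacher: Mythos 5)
Your proof is correct and follows essentially the same route as the paper: both rest on the Meyer--Nest universal-coefficient machinery for the stable homological ideal $\mathcal I=\ker\pi_*$ and on Lemma~\ref{lemma:rel_der_functors}(1) to identify the two outer terms. The paper simply cites \cite[Thm.~66]{Meyer-Nest} for the short exact sequence $0\to \mathsf L_0 F_*(X)\to F_*(X)\to \mathsf L_1F_*(\Sigma X)\to 0$, verifying only the phantom-vanishing hypothesis ($\Hom_{\mathcal T}(X,Z)=0$ for $Z$ with $\id_Z\in\mathcal I$, automatic since $\mathcal T$ is generated by shifts of $\mathbf 1$), whereas you make explicit the role of hereditariness in guaranteeing length-one $\pi_*$-projective resolutions and the attendant degeneration of the spectral sequence; that is a helpful unpacking of the same citation rather than a different argument. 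One small cross-check: your third term $(\mathsf L_1 F_*(X))[1]$ and the paper's $\mathsf L_1F_*(\Sigma X)$ agree because $F_*\circ\Sigma\simeq[1]\circ F_*$, and both then identify with $\mathrm{Tor}_1^R(\pi_*\Sigma X,\pi_*Y)_*$ via Lemma~\ref{lemma:rel_der_functors}(1) and the symmetry of $\mathrm{Tor}$, exactly as you observe.
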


\begin{proof}
Let $F: \mathcal T\to \mathsf{Ab}$ be an arbitrary homological functor and $X$ an object  of $\mathcal T$ such that $\Hom_\mathcal T(X,Z)=0$ for all objects $Z$ such that $\id_Z\in \mathcal I$. Then, as shown in \cite[Thm.~66]{Meyer-Nest}, there exists a natural short exact sequence 
$0\to \mathsf L_0 F_*(X)\to F_*(X)\to \mathsf L_1F_*(\Sigma X)\to 0$. 
Since $\mathcal T$ is generated by the shifts of~$\mathbf 1$, the only object $Z$ such that $\id_Z\in \mathcal I$ is the zero object. Hence, this applies to arbitrary objects $X\in \mathcal T$. Finally, by specializing ourselves to the functor $F:=\pi_*(-\otimes Y): \mathcal T\to \mathsf{Ab}$, we observe that Lemma~\ref{lemma:rel_der_functors}(1) identifies the outer terms of $\mathsf L_0F_*$ and $\mathsf L_1F_*$ as required.
%To see that the first map is given by the tensor product functor
\end{proof}

\begin{proposition}[Universal coefficient formula]
\label{prop:UCT}
For any objects $X,Y\in \mathcal T$, there is a natural short exact sequence of $R$-modules
\[
0 \to
\mathrm{Ext}^1_R(\pi_*\Sigma X, \pi_*Y)_0 \to 
 \Hom_{\mathcal T}(X, Y) \stackrel{\pi_*}{\to} 
  \Hom_R(\pi_*X, \pi_*Y) \to
   0\,.
\]
\end{proposition}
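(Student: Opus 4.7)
The plan is to mirror the strategy already used for the K\"unneth formula (Proposition~\ref{prop:KT}), but in the dual cohomological setting. Fix $Y\in\cT$ and consider the contravariant cohomological functor
\[
G:=\Hom_\cT(-,Y)\colon \cT^{\op}\to\mathsf{Ab},
\]
which automatically sends coproducts to products. Since $R$ is hereditary, the relative $\mathcal I$-homological algebra set up in \S5.1 has cohomological dimension at most~$1$, which is exactly the hypothesis needed for the cohomological analogue of \cite[Thm.~66]{Meyer-Nest} to apply. That dual version produces, for any object~$X\in\cT$ such that $\Hom_\cT(Z,X)=0$ for every~$Z$ with $\id_Z\in\mathcal I$, a natural short exact sequence
\[
0\to \mathsf R^1 G_*(\Sigma X)\to G_*(X)\to \mathsf R^0 G_*(X)\to 0
\]
of $\bbZ/n$-graded $R$-modules. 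As in the proof of Proposition~\ref{prop:KT}, the hypothesis on~$X$ holds automatically for every object of $\cT$, because $\cT$ is generated by the shifts of~$\mathbf 1$, and thus the zero object is the only one with identity in~$\mathcal I$.

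Next I would identify the outer terms via Lemma~\ref{lemma:rel_der_functors}(2), which computes
\[
\mathsf R^\ell G_*(-)\;\simeq\;\mathrm{Ext}^\ell_R\!\bigl(\pi_*(-),\,G_*(\mathbf 1)\bigr)\qquad (\ell=0,1).
\]
A direct unwinding of the grading convention $G_*=\{G\circ\Sigma^{-i}\}_i$ shows that $G_*(\mathbf 1)$ is naturally isomorphic to $\pi_*(Y)$ as a $\bbZ/n$-graded $R$-module (up to a shift that will be absorbed by the appearance of $\Sigma X$ in the Ext term). This identifies the right-hand term with $\Hom_R(\pi_*X,\pi_*Y)$ and the left-hand term with $\mathrm{Ext}^1_R(\pi_*\Sigma X,\pi_*Y)$.

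Finally, I would extract the degree-$0$ component of the resulting short exact sequence. By definition $G_*(X)_0=\Hom_\cT(X,Y)$, and under the above identification the middle map agrees with the functorial action of~$\pi_*$ on morphisms. This yields exactly the statement of Proposition~\ref{prop:UCT}.

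The main obstacle I anticipate is bookkeeping: one must be careful to invoke the correct contravariant version of \cite[Thm.~66]{Meyer-Nest} (with the suspension appearing on the $\mathsf R^1$ side rather than the $\mathsf L_1$ side), and must track the indexing shift relating $G_*(\mathbf 1)$ to $\pi_*(Y)$ so that the final Ext group is indeed the degree-$0$ component of $\mathrm{Ext}^1_R(\pi_*\Sigma X,\pi_*Y)_*$. Once these conventions are pinned down, the rest of the argument is a formal transcription of the proof of Proposition~\ref{prop:KT} with $F=\pi_*(-\otimes Y)$ replaced by $G=\Hom_\cT(-,Y)$.
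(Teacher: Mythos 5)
Your proposal matches the paper's proof almost verbatim: both apply the cohomological dual of the Meyer--Nest short exact sequence to the contravariant functor $G=\Hom_\cT(-,Y)$, identify the outer terms via Lemma~\ref{lemma:rel_der_functors}(2), observe that the hypothesis on $X$ is vacuous because $\mathbf 1$ generates $\cT$, and then pass to degree-zero components (noting that the middle map is $\pi_*$). The bookkeeping points you flag (the shift/index conventions relating $G_*(\mathbf 1)$ to $\pi_*(Y)$) are genuine but routine, and the paper handles them implicitly.
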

\begin{proof}
Similarly to Proposition~\ref{prop:KT}, there is a natural short exact sequence 
 $0\to \mathsf R^1 G_*(\Sigma X)\to G_*(X) \to \mathsf R^0G_*(X)\to 0 $. By first applying Lemma~\ref{lemma:rel_der_functors}(2) (with $G=\Hom_\mathcal T(-, Y)$) and then passing to the degree zero components, we obtain the above short exact sequence. Note that the middle map is $\pi_*$ because the isomorphism 
$\mathsf R^0G(X)\simeq \mathrm{Ext}^0_R(\pi_*X, \pi_*Y)_0$ of Lemma \ref{lemma:rel_der_functors}(2) identifies it with the canonical map $G(X)\to \mathsf R^0G(X)$ appearing in the short exact sequence.
\end{proof}

\begin{lemma}
\label{lemma:realization}
Given any $M\in R\textrm-\mathrm{Mod}^{\bbZ/n}$, there exists an object $X\in \mathcal T$ and an isomorphism $\pi_*(X)\cong M$. 
\end{lemma}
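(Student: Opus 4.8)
The plan is to realize an arbitrary $\bbZ/n$-graded $R$-module $M$ as $\pi_*(X)$ by first building a "free presentation object" in $\cT$ and then taking a mapping cone. First I would choose a presentation of $M$ as a graded $R$-module: since $R$ is noetherian (in fact we only need that $R\textrm-\Mod^{\bbZ/n}$ has enough projectives), pick free graded modules $P_1 \to P_0 \twoheadrightarrow M$ with kernel the image of the map $P_1\to P_0$. Concretely, write $P_0 = \bigoplus_\alpha R[i_\alpha]$ and $P_1 = \bigoplus_\beta R[j_\beta]$. By Lemma~\ref{lemma:universal_I-exact}, the functor $\pi^\dagger$ sending $R[i]$ to $\Sigma^i\mathbf 1$ is fully faithful on projectives and lands in the $\pi_*$-projective objects, with $\pi_*\circ\pi^\dagger \simeq \id$ on projectives. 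Hence set $\widetilde P_0 := \bigoplus_\alpha \Sigma^{i_\alpha}\mathbf 1$ and $\widetilde P_1 := \bigoplus_\beta \Sigma^{j_\beta}\mathbf 1$ in $\cT$ (these coproducts exist by hypothesis), with $\pi_*(\widetilde P_\ell)\simeq P_\ell$.

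Next I would lift the map $\varphi: P_1 \to P_0$ to a morphism $\widetilde\varphi: \widetilde P_1 \to \widetilde P_0$ in $\cT$. This is where fullness of $\pi_*$ on $\pi_*$-projectives is used: since $\widetilde P_1$ is $\pi_*$-projective, the map $\Hom_\cT(\widetilde P_1, \widetilde P_0) \to \Hom_R(P_1, P_0)$ is surjective (indeed an isomorphism, by Lemma~\ref{lemma:universal_I-exact} again, as $\pi_*$ restricts to an equivalence on projectives), so $\varphi$ lifts. Now define $X := \mathrm{cone}(\widetilde\varphi)$ and consider the long exact sequence obtained by applying the homological functor $\pi_*$ to the triangle $\widetilde P_1 \xrightarrow{\widetilde\varphi} \widetilde P_0 \to X \to \Sigma\widetilde P_1$. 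Because $\widetilde P_\ell$ are $\pi_*$-projective (equivalently, $\pi_*(\widetilde P_\ell)$ are projective $R$-modules), the relevant pieces of this long exact sequence degenerate; using that $R\textrm-\Mod^{\bbZ/n}$ has global dimension $\leq 1$ (hereditarity of $R$), the map $\pi_*(\widetilde\varphi) = \varphi$ has kernel a projective module, so one reads off $\pi_*(X) \simeq \mathrm{coker}(\varphi) = M$ together with the vanishing of the connecting map into $\pi_*(\Sigma\widetilde P_1)$. More precisely, the sequence $\pi_*\widetilde P_1 \xrightarrow{\varphi} \pi_*\widetilde P_0 \to \pi_*X \to \pi_*\Sigma\widetilde P_1 \xrightarrow{\Sigma^{-1}\varphi}$ shows $\pi_*X$ is an extension of $\ker(\varphi)[\text{shifted}]$ by $\mathrm{coker}(\varphi)$; choosing $P_1 = \ker(\varphi)$ mapped into a free $P_0$ is not possible in general, but choosing $P_1$ free surjecting onto the (automatically projective, hence free after adding a summand) kernel of $P_0\twoheadrightarrow M$ makes $\varphi$ have projective kernel, and since $\pi_*\Sigma\widetilde P_1 = P_1[1]$ is projective and $\varphi$ is split onto its image, the connecting map vanishes, yielding $\pi_*(X)\cong M$.

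The main obstacle is bookkeeping the graded degeneration: one must ensure that the free resolution $P_1\to P_0\to M\to 0$ can be taken with $P_1$ chosen so that $\varphi:P_1\to P_0$ is a split monomorphism onto its image (automatic here since $\ker(P_0\to M)$ is projective, as $R\textrm-\Mod^{\bbZ/n}$ is hereditary), so that the long exact sequence in $\pi_*$ breaks into a genuine short exact sequence $0 \to \pi_*\widetilde P_0/\varphi(\pi_*\widetilde P_1) \to \pi_*X \to \ker(\varphi)[1] \to 0$ with $\ker(\varphi) = 0$ after this choice — leaving $\pi_*(X)\cong M$ exactly. I expect no real difficulty beyond carefully invoking the equivalence of Lemma~\ref{lemma:universal_I-exact} on projectives and the hereditarity hypothesis; everything else is the standard "cellular" construction of an object with prescribed homotopy in a triangulated category generated by a compact unit.
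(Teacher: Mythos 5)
Your approach is the same as the paper's: realize a length-one resolution of $M$ by a map between $\pi_*$-projective objects via Lemma~\ref{lemma:universal_I-exact}, take the cone $X$, and read off $\pi_*(X)\cong M$ from the long exact sequence. However, the middle of your argument is internally inconsistent and contains a claim that is simply false as stated. You write that if $P_1$ is a free module surjecting onto the (projective) kernel of $P_0\twoheadrightarrow M$, then ``since $\pi_*\Sigma\widetilde P_1 = P_1[1]$ is projective and $\varphi$ is split onto its image, the connecting map vanishes.'' This does not follow. The image of the connecting map $\partial\colon \pi_*(X)\to P_1[1]$ is precisely $\ker(\varphi[1])=\ker(\varphi)[1]$, which is nonzero whenever $\ker(\varphi)\neq 0$; projectivity of $P_1[1]$ (or of $\ker(\varphi)$) only gives you a \emph{splitting} of the short exact sequence $0\to M\to\pi_*(X)\to\ker(\varphi)[1]\to 0$, hence $\pi_*(X)\cong M\oplus \ker(\varphi)[1]$, which is not $M$. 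Having $\ker(\varphi)$ projective is of no use here --- you genuinely need $\ker(\varphi)=0$.

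Your final paragraph gestures towards the right fix, but creates a further tension: you insist on a \emph{free} $P_1$ while simultaneously requiring $\varphi$ to be a monomorphism onto $K:=\ker(P_0\to M)$, which would force $K$ to be free --- not automatic over a general hereditary ring. The clean resolution, and what the paper does, is to relinquish freeness: take $P_1 := K$ itself. Since $R$ is hereditary, $K$ is projective (not necessarily free) in $R\textrm-\Mod^{\bbZ/n}$, and Lemma~\ref{lemma:universal_I-exact} supplies $\pi^\dagger$ on \emph{all} projective modules, not only on free ones. Then $\varphi\colon K\hookrightarrow P_0$ is a monomorphism, so $\varphi[1]$ is injective, the connecting map is zero, and the long exact sequence collapses to give $\pi_*(X)\cong\operatorname{coker}(\varphi)=M$ outright, with no extension problem to resolve. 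In short: drop ``free'' in favour of ``projective'' throughout, delete the erroneous claim that a split-onto-its-image $\varphi$ makes $\partial$ vanish, and take $P_1=K$ from the start.
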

\begin{proof}
Since the ring $R$ is hereditary, every $R$-module $M$ has a length-one free resolution 
$0\to F' \stackrel{f}{\to} F \to M\to 0$. The map $f$ can be realized (uniquely) in $\mathcal T$ as a morphism
$\pi^\dagger F' \stackrel{\varphi}{\to} \pi^\dagger F$ between $\pi_*$-projective objects; see the proof of Lemma~\ref{lemma:universal_I-exact}. Let us then define $X$ by choosing a distinguished triangle 
$\pi^\dagger F'\to \pi^\dagger F \to X\to \Sigma(\pi^\dagger F')$. Applying the functor $\pi_*$ to the latter triangle, we obtain an exact sequence $F'\to F \to \pi_*(X)\to F'[1]\to F[1]$ in $R\textrm-\mathrm{Mod}^{\bbZ/n}$. The rightmost map in this sequence is the shift~$f[1]$ of~$f$ and is therefore injective. These observations allow us to conclude that $\pi_*(X)\simeq M$.
\end{proof}
\begin{corollary}
\label{cor:iso_classes}
The functor $\pi_*$ induces a bijection between the set of isomorphism classes of objects in $\mathcal T$ and the set of isomorphism classes of objects in $R\textrm-\Mod^{\bbZ/n}$.
\end{corollary}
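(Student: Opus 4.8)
The plan is to verify separately that $\pi_*$ is surjective and injective on isomorphism classes. Surjectivity is nothing but Lemma~\ref{lemma:realization}: every $M\in R\textrm-\Mod^{\bbZ/n}$ is of the form $\pi_*(X)$. So the substance of the corollary is the injectivity statement, namely that an isomorphism $\phi\colon \pi_*(X)\stackrel{\sim}{\to}\pi_*(Y)$ of $\bbZ/n$-graded $R$-modules forces $X\simeq Y$ in~$\mathcal T$.

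To see this, I would first lift $\phi$ to a morphism in~$\mathcal T$. By the universal coefficient formula (Proposition~\ref{prop:UCT}) the map $\pi_*\colon \Hom_{\mathcal T}(X,Y)\to \Hom_R(\pi_*X,\pi_*Y)$ is surjective, so there exists $f\colon X\to Y$ with $\pi_*(f)=\phi$; note that the particular choice of lift is immaterial, since all that matters below is that $\pi_*(f)$ be an isomorphism. Next, complete $f$ to a distinguished triangle $X\stackrel{f}{\to}Y\to \mathrm{cone}(f)\to \Sigma X$ and apply the homological functors $\pi_i=\Hom_{\mathcal T}(\Sigma^i\mathbf1,-)$. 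The resulting long exact sequence has all of its maps $\pi_i(f)$ isomorphisms (they are the graded components of~$\phi$), and hence $\pi_i(\mathrm{cone}(f))=0$ for every $i$, \ie\ $\pi_*(\mathrm{cone}(f))=0$.

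It remains to conclude that $\mathrm{cone}(f)$ itself vanishes. Since $\mathcal T$ is compactly generated by the shifts of~$\mathbf1$ (hypothesis~(1) of Theorem~\ref{thm:hereditary_spectrum}), the set $\{\Sigma^i\mathbf1\}_{i\in\bbZ}$ jointly detects the zero object; therefore $\pi_*(\mathrm{cone}(f))=0$ implies $\mathrm{cone}(f)\simeq 0$, so that $f$ is an isomorphism. This establishes injectivity on isomorphism classes and, together with Lemma~\ref{lemma:realization}, proves the corollary. The one point that deserves attention --- rather than presenting a genuine difficulty --- is exactly this last implication, which rests on the standard fact that a generating set of objects detects~$0$; it applies here precisely because, by assumption, the shifts of~$\mathbf1$ generate~$\mathcal T$.
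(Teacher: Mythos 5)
Your proof is correct and takes essentially the same route as the paper: both use Lemma~\ref{lemma:realization} for surjectivity, then lift the module isomorphism via the universal coefficient formula (Proposition~\ref{prop:UCT}) to a map in $\mathcal T$ and note that its cone has vanishing $\pi_*$ and is therefore zero since the shifts of $\mathbf 1$ generate. The paper phrases injectivity as well-definedness of the realization up to isomorphism, but this is only a cosmetic difference from your argument.
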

\begin{proof}
It suffices to show that the object $X$ of Lemma~\ref{lemma:realization} which realizes the graded module $M$ is well-defined up to isomorphism. Take an isomorphism $f:M\stackrel{\sim}{\to} N$ of graded modules, and realize the corresponding modules by objects $X_M$ and $X_N$. Due to Proposition~\ref{prop:UCT}, we can then lift $f$ to a morphism $\varphi : X_M\to X_N$ in $\mathcal T$. Since $f$ is invertible, $\pi_*\mathrm{cone}(\varphi)=0$ and so $\varphi$ is also invertible. 
\end{proof}

\begin{corollary}
\label{cor:split}
The short exact sequences of Propositions~\ref{prop:KT} and \ref{prop:UCT} admit (unnatural) $R$-linear splittings.
\end{corollary}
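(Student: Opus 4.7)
The plan is to exploit the realization triangle $\pi^\dagger P_1 \xrightarrow{\varphi} \pi^\dagger P_0 \xrightarrow{p} X \xrightarrow{\partial} \Sigma \pi^\dagger P_1$ used implicitly in the proofs of Propositions~\ref{prop:KT} and~\ref{prop:UCT}, where $0\to P_1\xrightarrow{d} P_0 \xrightarrow{q}\pi_*X\to 0$ is a length-one projective resolution of $\pi_*X$ (which exists because $R$ is hereditary). Both short exact sequences of interest are the five-term extracts of the long exact sequences obtained by applying $\Hom_{\mathcal T}(-,Y)$ (for the UCT) and $-\otimes Y$ (for Künneth) to this triangle, so the task reduces to constructing explicit non-natural $R$-linear sections of their surjective maps.

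For the UCT sequence, given a degree-zero $R$-linear map $f\colon \pi_*X \to \pi_*Y$, the composite $f\circ q\colon P_0\to \pi_*Y$ lifts uniquely, by the full-faithfulness of $\pi^\dagger$ on projectives (Lemma~\ref{lemma:universal_I-exact}), to some $\tilde g_f\colon \pi^\dagger P_0\to Y$ in $\mathcal T$. Since $\pi_*(\tilde g_f\circ \varphi) = fqd = 0$ and $\pi^\dagger P_1$ is $\pi_*$-projective, the same lemma forces $\tilde g_f\circ \varphi = 0$; by exactness of the $\Hom$-sequence, $\tilde g_f = \bar g_f\circ p$ for some $\bar g_f\colon X\to Y$, and surjectivity of $q$ yields $\pi_*(\bar g_f) = f$. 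The Künneth case is formally analogous, using the identification $\pi_*(\pi^\dagger P_i\otimes Y)\simeq P_i\grotimes_R \pi_*Y$ (which holds because $\pi^\dagger P_i$ is a coproduct of shifts of $\mathbf 1$) to lift elements of $\mathrm{Tor}_1^R(\pi_*\Sigma X,\pi_*Y)_*$ back into $\pi_*(X\otimes Y)$ via a section of the boundary map.

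The main obstacle is that $\bar g_f$ is not uniquely determined: two choices differ by an element in the image of $\partial^*\colon \Hom_{\mathcal T}(\Sigma\pi^\dagger P_1,Y)\to \Hom_{\mathcal T}(X,Y)$, which is precisely the $\mathrm{Ext}^1_R(\pi_*\Sigma X,\pi_*Y)_0$-subgroup. Thus the construction canonically produces only an $R$-linear map into the quotient $\Hom_{\mathcal T}(X,Y)/\mathrm{Ext}^1_R(\pi_*\Sigma X,\pi_*Y)_0$, and upgrading it to an $R$-linear section of the full extension requires fixing, for each pair $(X,Y)$, a non-canonical $R$-linear splitting of that quotient. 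The hereditary hypothesis on $R$ enters precisely here, by forcing $\mathrm{Ext}^{\geq 2}_R$ to vanish and thereby killing the obstruction to such splittings. This is in exact parallel with the classical chain-level proof of the splittings of the UCT and Künneth sequences over a principal ideal domain, transported to the triangulated setting via the equivalence of Lemma~\ref{lemma:universal_I-exact}.
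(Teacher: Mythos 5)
Your argument correctly re-derives the surjectivity of $\pi_*$ in the UCT sequence (already part of Proposition~\ref{prop:UCT}), and you correctly locate the ambiguity in the lift $\bar g_f$ as lying in $\mathrm{Ext}^1_R(\pi_*\Sigma X,\pi_*Y)_0$. But the final step is circular: ``upgrading it to an $R$-linear section of the full extension requires fixing, for each pair $(X,Y)$, a non-canonical $R$-linear splitting of that quotient'' is precisely the statement of Corollary~\ref{cor:split}, and the claim that hereditariness of~$R$ ``kills the obstruction to such splittings'' by forcing $\mathrm{Ext}^{\geq 2}_R=0$ is not a valid inference. Vanishing of higher Ext groups never, by itself, forces a given short exact sequence of $R$-modules to split; over $\bbZ$ (hereditary) the extension $0\to\bbZ\xrightarrow{2}\bbZ\to\bbZ/2\to 0$ is non-split, and the splitting of the classical UCT over a PID is not a formal consequence of global dimension $\leq 1$. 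The classical chain-level proof you invoke splits the complex \emph{itself}, using that cycles and boundaries in a complex of projectives are projective submodules of projectives; there is no automatic analogue of that decomposition of $C_*$ at the level of the object $X\in\cT$, so the ``transport to the triangulated setting via Lemma~\ref{lemma:universal_I-exact}'' does not follow.

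The paper's proof supplies exactly the missing decomposition, but obtains it from the earlier Corollary~\ref{cor:iso_classes}: every object $X\in\cT$ splits as $\bigoplus_{i\in\bbZ/n}X_i$ with each $\pi_*(X_i)$ concentrated in degree~$i$. By naturality and additivity, the UCT (and Künneth) sequence decomposes into a direct sum of sequences indexed by pairs $(i,j)$, and for each fixed pair one of the two end terms $\mathrm{Ext}^1_R(\pi_*\Sigma X_i,\pi_*Y_j)_0$ or $\Hom_R(\pi_*X_i,\pi_*Y_j)$ vanishes for purely degree-theoretic reasons (the Hom demands $i=j$, the degree-zero $\mathrm{Ext}^1$ demands $i+1=j$). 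A short exact sequence with one trivial end term splits trivially. Without some such degree-separation device your surjectivity argument cannot be upgraded to an actual section, and the hereditary hypothesis is being used in Lemma~\ref{lemma:realization} and Corollary~\ref{cor:iso_classes} (to realize modules and objects), not to split extensions directly.
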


\begin{proof}
By Corollary \ref{cor:iso_classes}, every object $X\in \mathcal T$ is isomorphic to a direct sum $\bigoplus_{i\in \bbZ/n}X_i$, where each $\pi_*(X_i)= (\pi_iX)[i]$ is concentrated in degree~$i$. 
Then, the naturality of the short exact sequences and the additivity of the functors imply that the exact sequence of Proposition \ref{prop:UCT} 
decomposes as a direct sum of exact sequences
\[
0\to \mathrm{Ext}^1_R(\pi_*\Sigma X_i, \pi_* X_j)_0 \to \Hom_\mathcal T(X_i,Y_j) \to \Hom_R(\pi_*X_i, \pi_*Y_j)\to  0\,,
\]
where $i,j\in \bbZ/n$.
For every fixed pair $(i,j)$, we observe that either the first or the third group in the above sequence vanishes. In both cases, this implies that the sequence splits.
\end{proof}

\subsection{The residue field objects}

\begin{definition}
\label{def:residue}
Let $\mathfrak p\subset R$ be a prime ideal. The \emph{residue field object at~$\mathfrak p$}, denoted by $\kappa(\mathfrak p)$, is the object of $\mathcal T$ which realizes, as in Lemma~\ref{lemma:realization}, the residue field $k(\mathfrak p)[0] = (R/\mathfrak p)_\mathfrak p[0]$ of $R$ at~$\mathfrak p$ in degree zero.
\end{definition}

\begin{lemma}
\label{lemma:tensor_with_residue_field_objects}
For every object $X\in \mathcal T$, we have an isomorphism $X\otimes \kappa(\mathfrak p)\simeq \bigoplus_{a\in A} \kappa(\mathfrak p)[i_a]$, where $A$ is a set and the $i_a$'s are elements of $\bbZ/n$.
\end{lemma}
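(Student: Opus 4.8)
The plan is to use the Künneth formula (Proposition~\ref{prop:KT}) together with the structural facts we have already established about $\mathcal T$, most notably Corollary~\ref{cor:iso_classes} (every object is determined up to isomorphism by its homotopy module $\pi_*$) and the decomposition in the proof of Corollary~\ref{cor:split} (every object splits as a direct sum of objects whose $\pi_*$ is concentrated in a single degree). Since $X\otimes\kappa(\mathfrak p)$ is an object of $\mathcal T$, by Corollary~\ref{cor:iso_classes} it suffices to compute the graded $R$-module $\pi_*(X\otimes\kappa(\mathfrak p))$ and check that it is a direct sum of shifted copies of the one-dimensional $k(\mathfrak p)$-vector space $k(\mathfrak p)[0]$, i.e.\ that it is (noncanonically) isomorphic to a $\bbZ/n$-graded $k(\mathfrak p)$-vector space concentrated degreewise; then each such generator is realized by $\kappa(\mathfrak p)[i]$.

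First I would apply Proposition~\ref{prop:KT} with $Y=\kappa(\mathfrak p)$, obtaining a short exact sequence
\[
0\to \pi_*(X)\grotimes_R k(\mathfrak p) \to \pi_*(X\otimes\kappa(\mathfrak p)) \to \mathrm{Tor}_1^R(\pi_*\Sigma X, k(\mathfrak p))_* \to 0
\]
(using $\pi_*\kappa(\mathfrak p)\cong k(\mathfrak p)[0]$). The outer terms are computed entirely inside the category of $\bbZ/n$-graded $R$-modules. Now the key observation is that $k(\mathfrak p)=(R/\mathfrak p)_\mathfrak p$ is a field, so both $M\grotimes_R k(\mathfrak p)=M\otimes_R k(\mathfrak p)$ and $\mathrm{Tor}_1^R(M,k(\mathfrak p))$ are $k(\mathfrak p)$-vector spaces for any $R$-module $M$; degreewise these are just $k(\mathfrak p)$-vector spaces, hence free over $k(\mathfrak p)$, hence direct sums of copies of $k(\mathfrak p)$ sitting in the appropriate $\bbZ/n$-degree. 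Therefore both outer terms of the Künneth sequence are (noncanonically) of the form $\bigoplus_{b\in B}k(\mathfrak p)[j_b]$. By Corollary~\ref{cor:split} the sequence splits $R$-linearly, so the middle term $\pi_*(X\otimes\kappa(\mathfrak p))$ is itself isomorphic to a direct sum $\bigoplus_{a\in A}k(\mathfrak p)[i_a]$ of shifted copies of $k(\mathfrak p)$.

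Finally I would invoke Corollary~\ref{cor:iso_classes}: since $\pi_*\bigl(\bigoplus_{a\in A}\kappa(\mathfrak p)[i_a]\bigr)\cong \bigoplus_{a\in A}k(\mathfrak p)[i_a]\cong \pi_*(X\otimes\kappa(\mathfrak p))$ as $\bbZ/n$-graded $R$-modules, the objects $X\otimes\kappa(\mathfrak p)$ and $\bigoplus_{a\in A}\kappa(\mathfrak p)[i_a]$ are isomorphic in $\mathcal T$, which is exactly the claim. (One should note here that infinite direct sums of residue field objects are legitimate objects of $\mathcal T$: $\mathcal T$ has arbitrary coproducts and $\pi_*$ preserves them, so the realization procedure of Lemma~\ref{lemma:realization} is compatible with arbitrary direct sums.) The only mildly delicate point is bookkeeping the $\bbZ/n$-grading in the Tor-term — since $\pi_*\Sigma X=(\pi_*X)[1]$ the degrees get shifted by one — but this does not affect the conclusion, only the indexing set and the particular shifts $i_a$ that occur. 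I do not expect any serious obstacle: the whole argument is a formal consequence of the homological machinery (Künneth, splitting, realization) developed above, combined with the single elementary fact that vector spaces over a field are free.
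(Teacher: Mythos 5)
Your proof is correct and follows the same route as the paper: apply Proposition~\ref{prop:KT} with $Y=\kappa(\mathfrak p)$, observe that the two outer terms are $\bbZ/n$-graded $k(\mathfrak p)$-vector spaces, deduce the same for the middle term $\pi_*(X\otimes\kappa(\mathfrak p))$, write it as $\bigoplus_a k(\mathfrak p)[i_a]$, and then invoke Corollary~\ref{cor:iso_classes} to upgrade the module isomorphism to an isomorphism of objects of~$\mathcal T$. One small point in your favour: the paper passes from ``the outer terms are $k(\mathfrak p)$-vector spaces'' to ``the middle term is one'' without comment, and since an $R$-module extension of $k(\mathfrak p)$-modules need not itself be a $k(\mathfrak p)$-module (\emph{e.g.}\ $\bbZ/p^2$ as an extension of $\bbZ/p$ by $\bbZ/p$ over $R=\bbZ$), your explicit appeal to the $R$-linear splitting of Corollary~\ref{cor:split} is exactly the missing justification that makes this step airtight.
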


\begin{proof}
By Proposition~\ref{prop:KT}, $\pi_*(X\otimes \kappa(\mathfrak p))$ is isomorphic to some $\bbZ/n$-graded $k(\mathfrak p)$-vector space, since this is the case for the two outer modules in the corresponding short exact sequence. Moreover, every $k(\mathfrak p)$-vector space is of the form 
$$\bigoplus_{a\in A} k(\mathfrak p)[i_a] 
\cong  
\pi_* \left( \bigoplus_{a\in A} \Sigma^{i_a} \kappa(\mathfrak p) \right)\,.$$ 
Hence, thanks to Corollary~\ref{cor:iso_classes}, the object $A\otimes \kappa(\mathfrak p)$ decomposes as $\bigoplus_{a\in A} \kappa(\mathfrak p)[i_a]$. 
\end{proof}

\begin{lemma}
\label{lemma:tensor_two_residue_field_objects}
Let $\mathfrak p_1$ and $\mathfrak p_2$ be two prime ideals of $R$. 
Then, $\kappa(\mathfrak p_1) \otimes \kappa(\mathfrak p_2)\neq 0$ if and only if $\mathfrak p_1=\mathfrak p_2$.
\end{lemma}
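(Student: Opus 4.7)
The plan is to reduce everything to a computation of $\pi_*$ via the Künneth formula (Proposition~\ref{prop:KT}) together with the standard scalar-action trick for residue fields. Since the category $\mathcal T$ is compactly generated by the shifts of $\mathbf 1$, the functor $\pi_*$ is conservative on objects, \ie an object $X\in \mathcal T$ is zero if and only if $\pi_*(X)=0$. Consequently, it suffices to decide when $\pi_*(\kappa(\mathfrak p_1)\otimes \kappa(\mathfrak p_2))$ vanishes.

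First, by Definition~\ref{def:residue} we have $\pi_*\kappa(\mathfrak p_i)=k(\mathfrak p_i)[0]$, concentrated in degree zero. The short exact sequence of Proposition~\ref{prop:KT} then reduces to
\[
0 \too k(\mathfrak p_1)\otimes_R k(\mathfrak p_2) \too
\pi_*\bigl(\kappa(\mathfrak p_1)\otimes \kappa(\mathfrak p_2)\bigr) \too
\mathrm{Tor}_1^R\bigl(k(\mathfrak p_1), k(\mathfrak p_2)\bigr) \too 0,
\]
where the outer terms are also concentrated in degree zero (noting that $\pi_*\Sigma\kappa(\mathfrak p_1)$ is just a shift of $k(\mathfrak p_1)$ and that $R$ is hereditary, so higher Tor's vanish automatically). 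Thus the task is purely commutative-algebraic.

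For the easy direction $\mathfrak p_1=\mathfrak p_2=\mathfrak p$, the left term is $k(\mathfrak p)\otimes_R k(\mathfrak p)\simeq k(\mathfrak p)\neq 0$, forcing $\pi_*(\kappa(\mathfrak p)\otimes\kappa(\mathfrak p))\neq 0$ and hence $\kappa(\mathfrak p)\otimes\kappa(\mathfrak p)\neq 0$. For the converse, assume $\mathfrak p_1\neq \mathfrak p_2$. Without loss of generality pick $x\in \mathfrak p_1\setminus \mathfrak p_2$. Then multiplication by $x$ is zero on $k(\mathfrak p_1)=(R/\mathfrak p_1)_{\mathfrak p_1}$ but is invertible on $k(\mathfrak p_2)=(R/\mathfrak p_2)_{\mathfrak p_2}$. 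Since $R$ is commutative and acts on both tensor factors, for any element $a\otimes b\in k(\mathfrak p_1)\otimes_R k(\mathfrak p_2)$ one writes $a\otimes b = xa\otimes x^{-1}b = 0$, so $k(\mathfrak p_1)\otimes_R k(\mathfrak p_2)=0$. The same argument applied to a projective resolution of $k(\mathfrak p_2)$ over $R$ shows that $x$ acts as $0$ and as an isomorphism on $\mathrm{Tor}_1^R(k(\mathfrak p_1),k(\mathfrak p_2))$, forcing this group to vanish as well.

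Combining these two vanishings with the Künneth sequence yields $\pi_*(\kappa(\mathfrak p_1)\otimes\kappa(\mathfrak p_2))=0$, and therefore $\kappa(\mathfrak p_1)\otimes\kappa(\mathfrak p_2)=0$ by conservativity of~$\pi_*$. I do not anticipate any real obstacle here: the only mildly delicate point is justifying that $\pi_*$ detects zero objects, but this follows immediately from hypothesis (1) of Theorem~\ref{thm:hereditary_spectrum} because $X$ has no nonzero maps from any $\Sigma^i\mathbf 1$ exactly when $\pi_*(X)=0$.
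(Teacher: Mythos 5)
Your proof is correct and follows essentially the same route as the paper: reduce via the K\"unneth sequence of Proposition~\ref{prop:KT} to the statement that $k(\mathfrak p_1)\otimes_R^{\mathsf L}k(\mathfrak p_2)$ vanishes iff $\mathfrak p_1\neq\mathfrak p_2$. The paper simply cites this last fact as well known, whereas you spell out the standard ``acts as zero on one factor, invertibly on the other'' argument for $\mathrm{Tor}_0$ and $\mathrm{Tor}_1$, which indeed suffices because $R$ is hereditary.
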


\begin{proof}
This follows simply from Proposition~\ref{prop:KT} and from the general fact that 
$k(\mathfrak p_1)\otimes_{R}^\mathsf L k(\mathfrak p_2) \neq 0$ in $\cD(R)$
if and only if $\mathfrak p_1=\mathfrak p_2$.
\end{proof}

\begin{proposition}[Tensor product formula]
\label{prop:tensor_product_formula}
Given objects $X, Y \in \cT$ and a prime ideal $\mathfrak p\in \Spec(R)$, we have an isomorphism of $\bbZ/n$-graded $k(\mathfrak p)$-vector spaces
\begin{equation}\label{eq:tensor-prod}
\pi_*(X\otimes Y\otimes \kappa(\mathfrak p))\cong \pi_*(X\otimes \kappa(\mathfrak p)) \grotimes_{k(\mathfrak p)} \pi_*(Y\otimes \kappa(\mathfrak p))\,.
\end{equation}
\end{proposition}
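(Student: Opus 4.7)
The plan is to establish the formula by two applications of the K\"unneth short exact sequence (Proposition~\ref{prop:KT}), combined with a flat base-change argument along the quotient map $R\to k(\mathfrak p)$.

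First, I would apply K\"unneth to the factorization $X\otimes (Y\otimes \kappa(\mathfrak p))$, obtaining the natural short exact sequence
\begin{align*}
0 \to \pi_*(X) \grotimes_R V \to \pi_*(X\otimes Y\otimes \kappa(\mathfrak p)) \to \mathrm{Tor}_1^R(\pi_*\Sigma X, V)_* \to 0,
\end{align*}
where I have set $V:=\pi_*(Y\otimes \kappa(\mathfrak p))$. By Lemma~\ref{lemma:tensor_with_residue_field_objects}, $V$ is a graded $k(\mathfrak p)$-vector space, and in particular $k(\mathfrak p)$-flat. Using a length-one free resolution over the hereditary ring $R$ (for $\mathrm{Tor}$) and associativity of tensor (for $\grotimes_R$), I would verify the base-change identities
\[
M \grotimes_R V \cong (M \grotimes_R k(\mathfrak p)) \grotimes_{k(\mathfrak p)} V
\quad\text{and}\quad
\mathrm{Tor}_1^R(M, V)_* \cong \mathrm{Tor}_1^R(M, k(\mathfrak p))_* \grotimes_{k(\mathfrak p)} V,
\]
valid for every graded $R$-module $M$. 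Substituting these into the displayed sequence identifies its outer terms with $(\pi_*(X) \grotimes_R k(\mathfrak p))\grotimes_{k(\mathfrak p)} V$ and $\mathrm{Tor}_1^R(\pi_*\Sigma X, k(\mathfrak p))_*\grotimes_{k(\mathfrak p)} V$ respectively.

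A second application of K\"unneth, this time to $X\otimes \kappa(\mathfrak p)$, yields
\[
0\to \pi_*(X)\grotimes_R k(\mathfrak p) \to \pi_*(X\otimes \kappa(\mathfrak p)) \to \mathrm{Tor}_1^R(\pi_*\Sigma X, k(\mathfrak p))_* \to 0,
\]
which splits by Corollary~\ref{cor:split}. Tensoring the resulting (unnatural) direct sum decomposition over $k(\mathfrak p)$ with $V$ exhibits $\pi_*(X\otimes \kappa(\mathfrak p))\grotimes_{k(\mathfrak p)} V$ as the direct sum of the two outer terms of the transformed sequence from the previous step. Since that sequence is itself a short exact sequence of graded $k(\mathfrak p)$-vector spaces, it also splits, and comparing the two direct sum decompositions produces the desired isomorphism. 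The only step that requires real care is the base-change identification above, for which the key inputs are that every $k(\mathfrak p)$-vector space is $k(\mathfrak p)$-flat and that $R$ is hereditary so that higher $\mathrm{Tor}^R$'s vanish; tracking $k(\mathfrak p)$-linearity throughout is then routine.
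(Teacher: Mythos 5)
Your argument is correct, but it follows a genuinely different route from the paper's. The paper's proof is shorter and purely computational: by Lemma~\ref{lemma:tensor_with_residue_field_objects} one decomposes $X\otimes\kappa(\mathfrak p)\simeq\bigoplus_a\Sigma^{i_a}\kappa(\mathfrak p)$ and $Y\otimes\kappa(\mathfrak p)\simeq\bigoplus_b\Sigma^{i_b}\kappa(\mathfrak p)$, then uses coproduct-preservation of $\otimes$ to compute $X\otimes Y\otimes\kappa(\mathfrak p)\simeq\bigoplus_{a,b}\Sigma^{i_a+i_b}\kappa(\mathfrak p)$, and observes that both sides of the formula reduce to $\bigoplus_{a,b}k(\mathfrak p)[i_a+i_b]$. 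You instead run the K\"unneth sequence twice and perform a flat base change along $R\to k(\mathfrak p)$; this is more structural but a bit longer, and it still leans on Lemma~\ref{lemma:tensor_with_residue_field_objects} in two places — explicitly to know $V=\pi_*(Y\otimes\kappa(\mathfrak p))$ is a $k(\mathfrak p)$-vector space, and implicitly (applied to $X\otimes Y$) to conclude that the middle term $\pi_*(X\otimes Y\otimes\kappa(\mathfrak p))$ is one too, which is what lets the transformed sequence split. Two minor remarks: the appeal to Corollary~\ref{cor:split} is unnecessary, since a short exact sequence of graded $k(\mathfrak p)$-vector spaces splits automatically; and, in both your proof and the paper's, the $k(\mathfrak p)$-linearity of the final isomorphism is not free of content but follows because an $R$-module admits at most one compatible $k(\mathfrak p)$-module structure (so $R$-linear maps between $k(\mathfrak p)$-modules are automatically $k(\mathfrak p)$-linear) — the paper glosses over this by only claiming an isomorphism of graded $R$-modules, and you should likewise note it when ``tracking $k(\mathfrak p)$-linearity.''
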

\begin{proof}
The proof, as in \cite[Prop.\ 5.36]{Ivo}, is based on an explicit computation. In order to simplify the notation, let us write $\kappa$ for $\kappa(\mathfrak p)$ and $\grotimes$ for $\grotimes_{k(\mathfrak p)}$.
By Lemma \ref{lemma:tensor_with_residue_field_objects}, we have the following isomorphisms:
\begin{eqnarray*}
X\otimes \kappa \cong \bigoplus_{a\in A} \Sigma^{i_a}\kappa 
&&
Y \otimes \kappa \cong \bigoplus_{b\in B} \Sigma^{i_b}\kappa
\;.
\end{eqnarray*}
Hence,
\[
X\otimes Y\otimes \kappa
\simeq X\otimes \bigoplus_b \Sigma^{i_b}\kappa
\simeq \bigoplus_b \Sigma^{i_b}(X\otimes \kappa)
\simeq \bigoplus_{a,b}  \Sigma^{i_a + i_b}\kappa \,,
\]
which implies that
$
\pi_*(X\otimes Y\otimes \kappa) \simeq \bigoplus_{a,b} k[{i_a + i_b}]
$.
On the other hand, we have
\[
\pi_*(X\otimes \kappa)\grotimes \pi_*(Y\otimes \kappa)
\simeq \left( \bigoplus_a k[i_a] \right) \grotimes \left( \bigoplus_b k[i_b] \right)
\simeq  \bigoplus_{a,b} k[i_a+i_b] \,.
\]
These computations allow us to conclude that both sides of \eqref{eq:tensor-prod} are isomorphic as graded $R$-modules, which concludes the proof.
\end{proof}
\begin{corollary}
\label{cor:derived}
For every object $X\in \mathcal T$ and prime ideal $\mathfrak p\in \Spec(R)$, we have $\pi_*(X\otimes \kappa(\mathfrak p))=0$ if and only if 
$\pi_*(X)\otimes_R^{\mathsf L} k(\mathfrak p)=0 $ in~$\cD(R)$; we are considering $\pi_*(X)$ as the $R$-module $\bigoplus_i\pi_i(X)$.
\end{corollary}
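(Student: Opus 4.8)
The plan is to reduce the statement to a computation in the derived category $\cD(R)$, using the structure results already established for $\pi_*$. First I would observe that, by Corollary~\ref{cor:iso_classes}, the object $X$ decomposes up to isomorphism as $\bigoplus_{i\in\bbZ/n}X_i$ with $\pi_*(X_i)$ concentrated in degree~$i$; since tensoring with $\kappa(\mathfrak p)$ and applying $\pi_*$ are both additive, and since the statement concerns only whether the result vanishes, I may assume without loss of generality that $\pi_*(X)=M$ is concentrated in a single degree, say degree zero (the general case follows by summing over the graded pieces, keeping track of shifts in $\bbZ/n$).

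Next I would invoke the Künneth formula of Proposition~\ref{prop:KT} applied to the pair $(X,\kappa(\mathfrak p))$: there is a natural short exact sequence
\[
0 \to \pi_*(X)\grotimes_R \pi_*(\kappa(\mathfrak p)) \to \pi_*(X\otimes\kappa(\mathfrak p)) \to \mathrm{Tor}_1^R(\pi_*\Sigma X,\pi_*\kappa(\mathfrak p))_* \to 0\,.
\]
Since $\pi_*(\kappa(\mathfrak p))= k(\mathfrak p)[0]$ by Definition~\ref{def:residue}, the outer terms are exactly $M\otimes_R k(\mathfrak p)$ and $\mathrm{Tor}_1^R(M,k(\mathfrak p))$ (placed in the appropriate $\bbZ/n$-degrees). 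Therefore $\pi_*(X\otimes\kappa(\mathfrak p))=0$ if and only if both $M\otimes_R k(\mathfrak p)=0$ and $\mathrm{Tor}_1^R(M,k(\mathfrak p))=0$.

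Finally I would identify this last condition with the vanishing of $M\otimes_R^{\mathsf L} k(\mathfrak p)$ in $\cD(R)$. Since $R$ is hereditary, the module $M$ has projective dimension at most one, so $M\otimes_R^{\mathsf L}k(\mathfrak p)$ is concentrated in homological degrees $0$ and $1$, with $H_0 = M\otimes_R k(\mathfrak p)$ and $H_1=\mathrm{Tor}_1^R(M,k(\mathfrak p))$ and all higher Tor groups automatically zero. Hence $M\otimes_R^{\mathsf L}k(\mathfrak p)=0$ in $\cD(R)$ precisely when both of these groups vanish, which by the previous paragraph is equivalent to $\pi_*(X\otimes\kappa(\mathfrak p))=0$. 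This closes the argument.

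The only mildly delicate point, rather than a genuine obstacle, is the bookkeeping of $\bbZ/n$-gradings when one reassembles the general $X$ from its graded summands $X_i$: one must check that $\pi_*(X\otimes\kappa(\mathfrak p))=0$ if and only if each $\pi_*(X_i\otimes\kappa(\mathfrak p))=0$, which is immediate from additivity, and that this matches $\pi_*(X)\otimes_R^{\mathsf L}k(\mathfrak p)=0$ computed with the ungraded module $\bigoplus_i\pi_i(X)$ — here one uses the remark that the ungraded Tor is the sum of the components of the graded Tor. Everything else is the hereditary-case simplification of the Künneth spectral sequence to the short exact sequence already in hand.
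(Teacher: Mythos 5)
Your argument is correct and takes essentially the same route as the paper: apply the K\"unneth short exact sequence of Proposition~\ref{prop:KT} with $Y=\kappa(\mathfrak p)$, identify the outer terms with $\mathrm{Tor}_0^R$ and $\mathrm{Tor}_1^R$, and use the hereditary hypothesis to conclude these are exactly the homologies of $\pi_*(X)\otimes_R^{\mathsf L}k(\mathfrak p)$. The preliminary decomposition of $X$ into single-degree summands via Corollary~\ref{cor:iso_classes} is harmless but unnecessary, since the graded Tor is already by definition the sum of the ungraded Tors of the graded pieces.
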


\begin{proof}
This follows from the fact that the short exact sequence of Proposition~\ref{prop:KT} (applied to $Y=\kappa(\mathfrak p))$, implies that $\pi_*(X\otimes \kappa(\mathfrak p))=0$ if and only if the  complex $\pi_*(X)\otimes_R^\mathsf L k(\mathfrak p)$ is acyclic.
\end{proof}

\subsection{Two supports}

In this subsection, we will define two supports for~$\mathcal T$ with values in $\Spec(R)$, \ie two functions from the set of objects of $\mathcal T$ to the set of subsets of~$\Spec(R)$. 
The first one makes use of the residue field object of~Definition~\ref{def:residue} as follows.

\begin{definition}
For every object $X\in \mathcal T$, let $\sigma(X):= \{\mathfrak p\in \Spec(R) \mid X\otimes \kappa(\mathfrak p)\neq 0\}$.
\end{definition}

The second one, although more straightforward, will only play an auxiliary role in our proof in relation to compact objects.

\begin{definition}
For every object $X\in \mathcal T$, let $\tilde \sigma(X):= \{\mathfrak p\in \Spec(R) \mid (\pi_*X)_\mathfrak p\neq 0\}$.
\end{definition}

\begin{lemma}
\label{lemma:fg_cpt}
An object $X\in \mathcal T$ is compact if and only if $\pi_*(X)$ is a finitely generated $R$-module.
\end{lemma}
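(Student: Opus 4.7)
The plan is to prove the two implications separately. For the forward direction, I would introduce the full subcategory
\[
\mathcal{C} := \{X \in \mathcal T \mid \pi_*(X) \text{ is a finitely generated } R\text{-module}\}
\]
of $\mathcal T$ and verify it is a thick subcategory containing the $\otimes$-unit $\mathbf 1$. Membership of $\mathbf 1$ is immediate from hypothesis~(2), since $\pi_*(\mathbf 1)$ is isomorphic to $R$ concentrated in $\bbZ/n$-degree zero. Closure of $\mathcal C$ under shifts is obvious, while closure under cones follows from the long exact sequence that $\pi_*$ assigns to a distinguished triangle, combined with the fact that, because $R$ is noetherian, finitely generated $\bbZ/n$-graded $R$-modules are closed under subobjects, quotients, and extensions. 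Closure under direct summands is again a consequence of noetherianity. Since $\mathcal T$ is compactly generated by the shifts of $\mathbf 1$ by hypothesis~(1), the standard Neeman--Ravenel identification $\mathcal T^c = \langle \mathbf 1 \rangle$ then yields $\mathcal T^c \subseteq \mathcal C$, settling this implication.

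For the reverse direction, assume $\pi_*(X)$ is finitely generated. Exploiting hypothesis~(3), I would choose a length-one resolution
\[
0 \longrightarrow P \longrightarrow R^m \longrightarrow \pi_*(X) \longrightarrow 0
\]
of $\bbZ/n$-graded $R$-modules with $R^m$ finitely generated free and $P$ finitely generated projective; since $P$ is projective, it is a direct summand of some free module $R^\ell$. By Lemma~\ref{lemma:universal_I-exact}, which restricts $\pi_*$ to an equivalence between $\pi_*$-projective objects of $\mathcal T$ and projective graded $R$-modules, I can lift the inclusion $P \hookrightarrow R^m$ to a morphism $\pi^\dagger(P) \to \pi^\dagger(R^m)$ in $\mathcal T$ and complete it to a distinguished triangle
\[
\pi^\dagger(P) \longrightarrow \pi^\dagger(R^m) \longrightarrow Y \longrightarrow \Sigma \pi^\dagger(P).
\]
Repeating the argument of Lemma~\ref{lemma:realization} gives $\pi_*(Y) \simeq \pi_*(X)$, and therefore $X \simeq Y$ by Corollary~\ref{cor:iso_classes}. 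Now $\pi^\dagger(R^m)$ is a finite coproduct of shifts of $\mathbf 1$, hence compact; and $\pi^\dagger(P)$, being a direct summand of the compact object $\pi^\dagger(R^\ell)$, is compact as well. Consequently $Y$, and hence $X$, is compact.

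The delicate step, and the only place where a hypothesis beyond noetherianity intervenes in a substantive way, is the converse direction: it is precisely the hereditary assumption on $R$ that forces the syzygy $P$ to be projective and so allows the realization procedure to terminate after a single stage, entirely inside the compact objects. Without hypothesis~(3) one would in general be forced to write $X$ as a homotopy colimit of an infinite tower of compact pieces, and compactness of the resulting object would no longer be automatic.
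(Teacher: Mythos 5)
Your proof is correct and follows essentially the same route as the paper's: for compactness $\Rightarrow$ finite generation you propagate finite generation through the thick subcategory generated by $\mathbf 1$ using noetherianity, and for the converse you use the hereditary hypothesis to realize $\pi_*(X)$ as the cone of a map between $\pi_*$-projective objects and then invoke Lemma~\ref{lemma:realization} and Corollary~\ref{cor:iso_classes}. You are in fact a touch more careful than the paper in noting that the syzygy $P$ is only projective rather than free, so that $\pi^\dagger(P)$ is a retract of a finite coproduct of shifts of $\mathbf 1$ rather than literally such a coproduct; this is a harmless refinement, since compactness is what is needed and passes to retracts.
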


\begin{proof}
Clearly, $\pi_*(\mathbf 1)$ is a finitely generated $R$-module. Since $R$ is noetherian, a simple inductive argument using the long exact sequence for $\pi_*$ implies that $\pi_*(X)$ is also finitely generated for every object $X$ in the thick triangulated subcategory of~$\mathcal T$ generated by~$\mathbf 1$. Note that by hypothesis~(1) this latter category coincides with~$\mathcal T^c$. Conversely, if $\pi_*(X)$ is a finitely generated $R$-module, then $X$ is isomorphic to the cone of a morphism between objects of the form $\bigoplus_{a\in A}\Sigma^{i_a} \mathbf 1$ with $A$ a finite set. This would imply that $X$ is compact, which achieves the proof.
\end{proof}

\begin{lemma}
\label{lemma:cpt_coincidence}
If $X\in \mathcal T$ is a compact object, then $\sigma(X)=\tilde \sigma(X)$.
\end{lemma}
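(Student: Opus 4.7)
The goal is to show that for a compact $X \in \mathcal{T}$, the two sets $\sigma(X)$ and $\tilde\sigma(X)$ agree, and the plan is to bridge between them via the derived tensor product $\pi_*(X) \otimes_R^{\mathsf L} k(\mathfrak p)$.

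First I would invoke Corollary~\ref{cor:derived}, which directly translates membership in $\sigma(X)$ into a statement about $\cD(R)$: we have $\mathfrak p \notin \sigma(X)$ if and only if $X \otimes \kappa(\mathfrak p) = 0$, and this latter condition is equivalent to $\pi_\ast(X \otimes \kappa(\mathfrak p)) = 0$ (since $\pi_\ast$ is conservative on $\mathcal{T}$, as the shifts of $\mathbf{1}$ form a set of generators by hypothesis~(1)), which by the corollary is in turn equivalent to $\pi_\ast(X) \otimes_R^{\mathsf L} k(\mathfrak p) = 0$. So the first step rewrites
\[
\sigma(X) \;=\; \bigl\{ \mathfrak p \in \Spec(R) \;\big|\; \pi_\ast(X) \otimes_R^{\mathsf L} k(\mathfrak p) \neq 0 \bigr\}\,.
\]

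Next I would use the compactness of $X$. By Lemma~\ref{lemma:fg_cpt}, the $R$-module $\pi_\ast(X) = \bigoplus_i \pi_i(X)$ is finitely generated. It is then a standard fact of commutative algebra that, for a finitely generated module $M$ over a noetherian ring $R$ and a prime $\mathfrak p \in \Spec(R)$, the following are equivalent: $M_\mathfrak p = 0$; $M \otimes_R k(\mathfrak p) = 0$ (by Nakayama applied to $M_\mathfrak p$ over the local ring $R_\mathfrak p$); and $M \otimes_R^{\mathsf L} k(\mathfrak p) = 0$ (the nontrivial direction goes via the zeroth Tor, and the trivial one via $M_\mathfrak p = 0 \Rightarrow M \otimes_R^{\mathsf L} k(\mathfrak p) \simeq M_\mathfrak p \otimes_{R_\mathfrak p}^{\mathsf L} k(\mathfrak p) = 0$).

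Applying this equivalence with $M = \pi_\ast(X)$ identifies the set just described with $\tilde\sigma(X) = \{\mathfrak p \mid (\pi_\ast X)_\mathfrak p \neq 0\}$, which yields the desired equality $\sigma(X) = \tilde\sigma(X)$. The only nontrivial input is the finite generation of $\pi_\ast(X)$, which is why compactness is essential; without it, Nakayama fails and the two supports may disagree (only the implication $\sigma(X) \subseteq \tilde\sigma(X)$ holds in general). There is no serious obstacle in the argument beyond correctly chaining the already-established Corollary~\ref{cor:derived} and Lemma~\ref{lemma:fg_cpt} with this standard commutative-algebra fact.
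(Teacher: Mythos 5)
Your proof is correct, but it takes a genuinely different and more streamlined route than the paper's. Both arguments share the same opening move: translate $\mathfrak p \notin \sigma(X)$ into $\pi_\ast(X) \otimes_R^{\mathsf L} k(\mathfrak p) = 0$ via Corollary~\ref{cor:derived} and conservativity of $\pi_\ast$, and invoke Lemma~\ref{lemma:fg_cpt} for finite generation. From there they diverge. The paper's proof exploits the hereditary hypothesis on $R$ to produce a length-one projective resolution $0 \to P' \to P \to \pi_\ast(X) \to 0$ by finitely generated projectives, localizes it at $\mathfrak p$, and then invokes Roberts' result that any perfect complex over a local ring is quasi-isomorphic to a \emph{minimal} one ($d(Q') \subset \mathfrak p R_\mathfrak p Q$); minimality then forces $\pi_\ast(X) \otimes_{R_\mathfrak p}^{\mathsf L} k(\mathfrak p) = 0 \Leftrightarrow Q = Q' = 0 \Leftrightarrow \pi_\ast(X)_\mathfrak p = 0$. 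You instead close the loop with the three elementary implications $M_\mathfrak p = 0 \Rightarrow M \otimes_R^{\mathsf L} k(\mathfrak p) = 0$ (flatness of localization), $M \otimes_R^{\mathsf L} k(\mathfrak p) = 0 \Rightarrow M \otimes_R k(\mathfrak p) = 0$ (take $H_0$), and $M \otimes_R k(\mathfrak p) = 0 \Rightarrow M_\mathfrak p = 0$ (Nakayama for the finitely generated $R_\mathfrak p$-module $M_\mathfrak p$). This bypasses Roberts' minimal-complex machinery and, notably, makes no use of the hereditary hypothesis at all — finite generation over a noetherian ring suffices. That is a genuine simplification: the paper's route encodes the same Nakayama phenomenon inside the minimality condition on $Q' \to Q$, but at the cost of an extra hypothesis and a citation. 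Both approaches are valid; yours is the more economical for this particular lemma.
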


\begin{proof}
Let us consider $\pi_*(X)$ as the ungraded $R$-module 
$\bigoplus_i\pi_i(X)$. By Corollary \ref{cor:derived} we have 
$\mathfrak p \not \in \sigma(X)$, with $\mathfrak p \in \Spec(R)$, if and only if the complex 
$\pi_*(X)\otimes_R^\mathsf L k(\mathfrak p)= \pi_*(X)\otimes_{R_{\mathfrak p}}^\mathsf L k(\mathfrak p)$ 
is zero in $\cD(R)$.
Let $0\to P'\to P \to \pi_*(X)\to 0$ be a $R$-projective resolution of $\pi_\ast(X)$. Thanks to Lemma \ref{lemma:fg_cpt}, we may assume that $P$ and $P'$ are finitely generated.
Hence, the localized sequence $0\to P'_{\mathfrak p}\to P_\mathfrak p\to \pi_*(X)_\mathfrak p\to 0$ is a length-one resolution by finite projectives  over the local ring $R_{\mathfrak p}$. 
As proved in \cite[(2.2.4)]{Roberts}, there exists a quasi-isomorphism $(d:Q'\to Q)\cong (P'_{\mathfrak p}\to P_{\mathfrak p})$ of perfect complexes over $R_{\mathfrak p}$, where $d:Q'\to Q$ is {\em minimal}, \ie $d(Q')\subset \mathfrak pR_\mathfrak p\cdot Q$. Note that $(d:Q'\to Q)$ is again a resolution of $\pi_*(X)_\mathfrak p$.
In sum, the following isomorphism holds in $\cD(R)$:
$$\pi_*(X)\otimes_{R_{\mathfrak p}}^\mathsf L k(\mathfrak p)
 \cong (d: Q'\to Q)\otimes_{R_{\mathfrak p}} k(\mathfrak p) 
 \cong  (0:Q'/\mathfrak pR_{\mathfrak p}Q' \to Q/\mathfrak pR_{\mathfrak p}Q)\,.$$
Hence, $\pi_*(X)\otimes_{R_{\mathfrak p}}^\mathsf L k(\mathfrak p) = 0$ $\Leftrightarrow$
 $Q'/\mathfrak pR_{\mathfrak p}Q' = Q/\mathfrak pR_{\mathfrak p}Q=0$ $\Leftrightarrow$ $Q' = Q =0$ $\Leftrightarrow$ $\pi_*(X)_{\mathfrak p}=0$ $\Leftrightarrow$ $\mathfrak p\in \tilde \sigma(X)$,
which achieves the proof.
\end{proof}

\begin{corollary}
\label{cor:closed}
If $X \in \cT$ is a compact object, the subset $\sigma(X)\subset \Spec(R)$ is closed.
\end{corollary}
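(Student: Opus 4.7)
The plan is to reduce the statement to the classical fact that the support of a finitely generated module over a commutative ring is closed in the Zariski spectrum.

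First, I would invoke Lemma~\ref{lemma:cpt_coincidence}: since $X$ is compact, we have the equality $\sigma(X) = \tilde\sigma(X)$, so it suffices to show that $\tilde\sigma(X)\subset \Spec(R)$ is closed. By definition, $\tilde\sigma(X) = \{\mathfrak p\in \Spec(R)\mid (\pi_*X)_\mathfrak p\neq 0\}$, where we regard $\pi_*(X)$ as the ungraded $R$-module $\bigoplus_i \pi_i(X)$.

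Next, I would apply Lemma~\ref{lemma:fg_cpt}, which tells us that compactness of $X$ implies that $\pi_*(X)$ is a finitely generated $R$-module. At this point, $\tilde\sigma(X)$ is exactly the classical support $\mathrm{supp}_R(\pi_*X)$ of a finitely generated module. Since $R$ is noetherian, standard commutative algebra gives the equality
\[
\mathrm{supp}_R(\pi_*X) = V\bigl(\mathrm{Ann}_R(\pi_*X)\bigr)\,,
\]
which is a closed subset of $\Spec(R)$. Combining these identifications yields that $\sigma(X) = V(\mathrm{Ann}_R(\pi_*X))$ is closed, as desired.

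No step presents a genuine obstacle: the two previous lemmas do all the heavy lifting (one translating the residue-field-object support into the algebraic support, the other ensuring finite generation), and the remainder is a textbook fact from commutative algebra. The only thing worth being careful about is that we are using the \emph{ungraded} underlying module of $\pi_*(X)$, which is consistent with the conventions set up in Lemma~\ref{lemma:cpt_coincidence} and Corollary~\ref{cor:derived}.
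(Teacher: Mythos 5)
Your proof is correct and follows essentially the same route as the paper: reduce to $\tilde\sigma(X)$ via Lemma~\ref{lemma:cpt_coincidence}, use Lemma~\ref{lemma:fg_cpt} for finite generation, and conclude with the classical fact that the support of a finitely generated module is closed. One minor remark: the identity $\mathrm{supp}_R(M)=V(\mathrm{Ann}_R M)$ for finitely generated $M$ holds over any commutative ring, so the appeal to noetherianness is not actually needed here.
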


\begin{proof}
When $X$ is a compact object, the $R$-module $\pi_*(X)$ is finitely generated by Lemma \ref{lemma:fg_cpt}. Lemma \ref{lemma:cpt_coincidence} then implies that $\sigma(X)=\tilde \sigma(X)$, which for finite modules is well-known to be a closed set in the Zariski topology; see \cite[$\S$3 Exercise 19~(vii)]{Atiyah-McDonald}.
\end{proof}

\subsection{Computation of the spectrum}

In this subsection we will verify that the support $(\Spec(R),\sigma)$ on $\mathcal T$, defined in the previous subsection, satisfies the conditions (S0)-(S9) of \cite[Thm.~3.1]{Ivo}. As a consequence, the restriction of $(\Spec(R), \sigma)$ to the compact objects of $\mathcal T^c$ is a \emph{classifying} support datum, \ie the canonical map $\varphi: \Spec(R)\to \Spc(\cT^c)$ is a homeomorphism;
see \cite[Thm.~2.19]{Ivo}. This result, combined with Lemmas \ref{lemma:cpt_coincidence} and \ref{lemma:rho_inverse}, implies that $\rho: \Spc (\mathcal T^c)\to \Spec(R)$ is a homeomorphism. 
%\goncalo{Moreover, \cite[Prop.\ 6.11~(b)]{Balmer-spectra} implies that $\rho$ is an isomorphism of ringed spaces.}

\begin{lemma}
\label{lemma:rho_inverse}
Let $(\mathcal K, \otimes, \mathbf 1)$ be a tensor triangulated category and let us keep the notations 
$R:=\mathsf{End}_{\mathcal K}(\mathbf 1)$,  
$\pi_i(X):=\Hom_{\mathcal K}(\Sigma^i\mathbf 1, X)$ 
and $\tilde\sigma(X):=\{\mathfrak p\in \Spec(R)\mid (\pi_*X)_\mathfrak p\neq0\}$.
If the pair $(\Spec(R), \tilde \sigma)$ satisfies the axioms of a support datum on~$\mathcal K$, then the map $\varphi: \Spec(R)\to \Spc(\mathcal K)$ (induced by the universal property of the latter) is a continuous section of the natural comparison map $\rho: \Spc(\cK) \to \Spec(R)$.
In particular, if $(\Spec(R), \tilde \sigma)$ is a classifying support datum, then $\rho$ is the inverse of~$\varphi$.
\end{lemma}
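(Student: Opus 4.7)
The plan is to verify directly that $\rho\circ \varphi = \mathrm{id}_{\Spec(R)}$ (which is precisely the meaning of ``$\varphi$ is a section of $\rho$''); continuity of the two maps and the ``In particular'' statement will then follow formally. Continuity of $\rho$ is built into Balmer's construction, continuity of $\varphi$ is provided by the universal property of support data \cite[Thm.~2.19]{Ivo}, and if $\tilde\sigma$ is in addition classifying then $\varphi$ is a homeomorphism by \loccit, so $\rho\circ \varphi=\mathrm{id}$ forces $\rho=\varphi^{-1}$.

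For the equality $\rho\circ \varphi=\mathrm{id}$, the first step will be to unwind both maps explicitly. The map produced by the universal property sends
\[
\mathfrak p\;\longmapsto\; \{A\in \cK\mid \mathfrak p\notin \tilde\sigma(A)\}= \{A\in \cK\mid (\pi_*A)_\mathfrak p=0\}\,,
\]
while Balmer's comparison map is $\rho(\mathcal P)=\{f\in R\mid \mathrm{cone}(f)\notin \mathcal P\}$. Composing, I obtain
\[
\rho(\varphi(\mathfrak p))=\{f\in R\mid (\pi_*\mathrm{cone}(f))_\mathfrak p\neq 0\}\,,
\]
so the task reduces to the purely algebraic identity $\{f\in R\mid (\pi_*\mathrm{cone}(f))_\mathfrak p\neq 0\}=\mathfrak p$.

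My strategy for this identity is to apply the homological functor $\pi_*$ to the distinguished triangle $\mathbf 1\xrightarrow{f}\mathbf 1\to \mathrm{cone}(f)\to \Sigma\mathbf 1$ and then localize at $\mathfrak p$. The outgoing maps $\pi_i(\mathbf 1)\to \pi_i(\mathbf 1)$ are multiplication by $f$ on the $R$-module $\pi_*(\mathbf 1)$, and localization at $\mathfrak p$ is exact. If $f\notin \mathfrak p$, then $f$ is a unit in $R_\mathfrak p$, each of these multiplications becomes an isomorphism, and the long exact sequence squeezes $(\pi_*\mathrm{cone}(f))_\mathfrak p$ to zero. If $f\in \mathfrak p$, then $(f)\subseteq \mathfrak pR_\mathfrak p$ lies inside the maximal ideal of the local ring $R_\mathfrak p$, so $R_\mathfrak p/(f)\neq 0$; the segment of the localized long exact sequence around degree zero exhibits this nonzero quotient as a subobject of $\pi_0(\mathrm{cone}(f))_\mathfrak p$, forcing $(\pi_*\mathrm{cone}(f))_\mathfrak p\neq 0$.

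I do not expect substantial obstacles: once both maps have been unwound, the argument reduces to a standard long exact sequence manipulation combined with the basic behavior of a prime ideal after localization. The only point requiring a sliver of care is confirming that the $R$-action on $\pi_*(\mathbf 1)$ induced by $f$ coincides with scalar multiplication by $f\in R$ in the localization at $\mathfrak p$; but this is immediate from the definition of the $R$-module structure on $\pi_*(-)$ via post-composition with endomorphisms of $\mathbf 1$.
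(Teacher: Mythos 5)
Your proof is correct and takes essentially the same approach as the paper: both verify $\rho\circ\varphi=\mathrm{id}_{\Spec(R)}$ by unwinding the definitions of $\rho$, $\varphi$ and $\tilde\sigma$ and reducing to the equivalence $(\pi_*\mathrm{cone}(r))_\mathfrak p=0 \Leftrightarrow r\notin\mathfrak p$. You simply spell out, via the localized long exact sequence of the triangle $\mathbf 1\xrightarrow{r}\mathbf 1\to\mathrm{cone}(r)\to\Sigma\mathbf 1$, the final algebraic step that the paper leaves implicit.
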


\begin{proof} 
Recall that the map $\varphi$ is given by $\mathfrak p \mapsto \varphi(\mathfrak p)=\{X\in \mathcal K\mid \mathfrak p\not \in \tilde \sigma(X)\}$. 
For every $r\in R$ and $\mathfrak p\in \Spec(R)$, we have
\begin{eqnarray}
r\not\in \rho(\varphi(\mathfrak p))
&\Leftrightarrow& 
\mathrm{cone}(r) \in \varphi(\mathfrak p)
 \label{eq:numb1}\\
&\Leftrightarrow&
 \mathfrak p\not\in \tilde\sigma(X)
  \label{eq:numb2}\\
&\Leftrightarrow&
 (\pi_*(\mathrm{cone}(r))  )_\mathfrak p =0  \label{eq:numb3}\\
&\Leftrightarrow&
 r\in (R_{\mathfrak p})^\times \nonumber\\
&\Leftrightarrow&
r\not\in \mathfrak p\,, \nonumber
\end{eqnarray}
where \eqref{eq:numb1} follows from the definition of $\rho$, \eqref{eq:numb2} from the definition of $\varphi$, and \eqref{eq:numb3} from the definition of $\tilde\sigma$. This implies that $\rho\circ \varphi = \Id_{\Spec(R)}$, and so the proof is finished.
\end{proof}
Since $\sigma$ is defined via a family of coproduct-preserving exact functors, it trivially satisfies conditions (S0)-(S4) and~(S6); see \cite[Lemma 3.3]{Ivo}. Let us now recall the remaining four important conditions:
\medbreak
\noindent \emph{Partial Tensor Product formula:}
\begin{itemize}
\item[(S5)]
$\sigma(X\otimes Y)=\sigma(X)\cap \sigma(Y)$ for every $X\in \mathcal T^c$ and every~$Y\in \mathcal T$.
\end{itemize}
\medbreak
\noindent \emph{Correspondence between quasi-compact opens and compact objects:}
\begin{itemize}
\item[(S7)] 
$\sigma(X)$ is a closed subset of $\Spec(R)$ with quasi-compact open complement for every compact object~$X$. 
\item[(S8)] For every closed subset $V\subset \Spec(R)$ with quasi-compact open complement, there exists a compact object $X\in \mathcal T^c$ such that $\sigma(X)=V$.
\end{itemize}
\medbreak
\noindent \emph{Detection of objects:}
\begin{itemize}
\item[(S9)] If $\sigma(X)=\emptyset$, then $X= 0$.
\end{itemize}

Condition (S5) follows from Proposition \ref{prop:tensor_product_formula} since the tensor product $M\grotimes_{k(\mathfrak p)} N$ of two graded $k(\mathfrak p)$-vector spaces is zero if and only if one of the factors is already zero.
Note that since $R$ is noetherian, every open subset of $\Spec(R)$ is quasi-compact. Hence, condition (S7) follows immediately from Corollary \ref{cor:closed}. The next lemma proves condition~(S8), and the following one condition~(S9).
\begin{lemma}
\label{lemma:closed_cpt}
For every closed subset $V\subset \Spec(R)$, there exists a compact object $X\in \mathcal T^c$ such that $\sigma(X)=V$. 
\end{lemma}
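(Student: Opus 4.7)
The approach I would take is to realize the desired closed set directly as the support of a suitable $R$-module, and then lift that module to $\mathcal T$ via the realization lemma.

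First, since $R$ is noetherian (hypothesis~(3) of Theorem~\ref{thm:hereditary_spectrum}), every closed subset $V\subset \Spec(R)$ is of the form $V(I)$ for some ideal $I\subset R$, and moreover $I$ is finitely generated. In particular, the $R$-module $R/I$ is finitely generated, and its support (as an $R$-module) is
\[
\mathrm{supp}_R(R/I) \;=\; V(\mathrm{Ann}_R(R/I)) \;=\; V(I) \;=\; V.
\]

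Next, view $R/I$ as an object of $R\textrm{-}\Mod^{\bbZ/n}$ concentrated in degree~$0$. By Lemma~\ref{lemma:realization} there exists an object $X\in \mathcal T$ together with an isomorphism $\pi_\ast(X)\simeq R/I$. Since $\pi_\ast(X)$ is finitely generated, Lemma~\ref{lemma:fg_cpt} guarantees that $X$ belongs to $\mathcal T^c$.

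Finally, because $X$ is compact, Lemma~\ref{lemma:cpt_coincidence} gives $\sigma(X)=\tilde\sigma(X)$, and by the very definition of $\tilde\sigma$ we have
\[
\tilde\sigma(X) \;=\; \{\mathfrak p\in \Spec(R)\mid (\pi_\ast X)_\mathfrak p\neq 0\} \;=\; \mathrm{supp}_R(R/I) \;=\; V,
\]
which concludes the argument. There is no real obstacle here: the proof just glues together three lemmas already established (realization, characterization of compact objects, and coincidence of the two supports on compacts). The only thing to check beyond routine is that Noetherianity of $R$ is used in two places (to write $V=V(I)$ and to deduce $R/I$ is finitely generated), both of which are covered by hypothesis~(3) of Theorem~\ref{thm:hereditary_spectrum}.
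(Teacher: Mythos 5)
Your argument is correct, but it takes a genuinely different route than the paper. The paper constructs the object explicitly in $\cT$ as $X=\mathrm{cone}(r_1)\otimes\cdots\otimes\mathrm{cone}(r_s)$ for a finite generating set of an ideal $I$ with $V=V(I)$, and then invokes the tensor-product formula~(S5) together with \cite[Prop.~3.12(b)]{Ivo} to identify $\sigma(X)$; this is the standard ``Koszul object'' construction familiar in tensor triangular geometry. You instead take the desired module $R/I$ first, realize it via Lemma~\ref{lemma:realization}, read off compactness from Lemma~\ref{lemma:fg_cpt}, and then compute $\sigma(X)=\tilde\sigma(X)=\mathrm{supp}_R(R/I)=V$ using Lemma~\ref{lemma:cpt_coincidence}. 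Your argument is entirely self-contained within the toolkit of the section (no appeal to (S5) or to \cite{Ivo}) and is arguably cleaner; the trade-off is that it relies essentially on the realization Lemma~\ref{lemma:realization}, which is special to the hereditary situation, whereas the paper's cone construction is the one that generalizes to settings without a universal coefficient theorem.

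One small bookkeeping remark: you say Noetherianity is used to write $V=V(I)$ and to conclude $R/I$ is finitely generated. Neither of these actually requires the hypothesis: every Zariski-closed subset of $\Spec(R)$ is of the form $V(I)$ for an arbitrary commutative ring, and $R/I$ is always cyclic, hence finitely generated. Noetherianity is indeed needed, but it is used under the hood in Lemmas~\ref{lemma:fg_cpt} and~\ref{lemma:cpt_coincidence} (and it also ensures that every closed $V$ has quasi-compact open complement, so that the lemma as stated implies condition~(S8)). This does not affect the correctness of your proof, only the attribution of where the hypothesis enters.
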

\begin{proof}
We have seen that $\sigma$ satisfies condition~(S5), and moreover~$R$ is noetherian by hypothesis. Since for compact objects $\sigma$ coincides with~$\tilde \sigma$, \cite[Prop.\ 3.12~(b)]{Ivo} implies the claim. Recall from {\it loc.\,cit.} that the object $X$ can be constructed as $X=\mathrm{cone}(r_1)\otimes \cdots \otimes \mathrm{cone}(r_s)$, where $(r_1,\ldots,r_s)$ is a finite set of generators for any ideal $I\subset R$ with $V=V(I)$.
\end{proof}

\begin{lemma}
\label{lemma:detection}
If $\sigma(X)=\emptyset$ then $X=0$.
\end{lemma}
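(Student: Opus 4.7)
The plan is to reduce the statement $\sigma(X)=\emptyset\Rightarrow X=0$ to the vanishing of $\pi_*(X)$ via Corollary~\ref{cor:derived}, then perform this vanishing by a commutative-algebra argument tailored to the hereditary noetherian hypothesis on~$R$, and finally use compact generation to obtain $X=0$.

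First, by Corollary~\ref{cor:derived} the assumption $\sigma(X)=\emptyset$ translates to
\[
\pi_*(X)\otimes_R^{\mathsf L} k(\mathfrak p) \;=\; 0 \quad\textrm{in}\;\cD(R)
\]
for every $\mathfrak p\in \Spec(R)$, where $M:=\pi_*(X)=\bigoplus_i\pi_i(X)$ is viewed as the underlying ungraded $R$-module. Since by hypothesis~(1) the shifts of~$\mathbf 1$ form a set of compact generators of~$\cT$, the conclusion $M=0$ immediately yields $X=0$. It therefore suffices to prove the following commutative-algebra statement: \emph{for a commutative hereditary noetherian ring~$R$, any $R$-module~$M$ with $M\otimes_R^{\mathsf L} k(\mathfrak p)=0$ for every $\mathfrak p\in \Spec(R)$ is zero.}

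Such an~$R$ is a finite product of Dedekind domains (counting fields as Dedekind domains of Krull dimension zero), so we may reduce to the case of a single Dedekind domain~$R$. Hereditarity of~$R$ guarantees that $M\otimes_R^{\mathsf L} k(\mathfrak p)$ has at most two non-zero cohomology groups, $M\otimes_R k(\mathfrak p)$ and $\mathrm{Tor}_1^R(M,k(\mathfrak p))$, and both must vanish. At the generic point $\mathfrak p=0$, $k(0)=\mathrm{Frac}(R)$ is flat, so the vanishing reads $M\otimes_R \mathrm{Frac}(R)=0$, i.e.\ $M$ is $R$-torsion. At a nonzero prime~$\mathfrak p$, choose a uniformizer~$\pi$ for the DVR~$R_{\mathfrak p}$; resolving $k(\mathfrak p)\simeq R_{\mathfrak p}/(\pi)$ by $0\to R_{\mathfrak p}\xrightarrow{\pi} R_{\mathfrak p}\to k(\mathfrak p)\to 0$ identifies the vanishing of $M_{\mathfrak p}\otimes_{R_{\mathfrak p}}^{\mathsf L}k(\mathfrak p)$ with the condition that $\pi$ act invertibly on $M_{\mathfrak p}$. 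Any element of $M_{\mathfrak p}$ is annihilated by a non-zero scalar (since $M$ is $R$-torsion), hence by a unit-multiple of some power of~$\pi$, and must therefore vanish; so $M_{\mathfrak p}=0$ for every prime, and noetherianity of~$R$ then forces $M=0$.

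The main subtlety to anticipate is that $M$ is typically not finitely generated, so Nakayama's lemma is not directly applicable; the hereditary hypothesis is what supplies the length-one projective resolution needed to extract the $\pi$-invertibility from the $\mathrm{Tor}_1$-vanishing, and it is precisely the interaction of this closed-point information with the torsion condition supplied by the generic point that closes the argument.
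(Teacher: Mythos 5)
Your proof is correct, but it takes a different route from the paper's. Both arguments begin identically: by Corollary~\ref{cor:derived} the hypothesis $\sigma(X)=\emptyset$ means $M:=\pi_*(X)$ satisfies $M\otimes_R^{\mathsf L}k(\mathfrak p)=0$ for all $\mathfrak p\in\Spec(R)$, and once $M=0$ is established, hypothesis~(1) of Theorem~\ref{thm:hereditary_spectrum} (compact generation by the shifts of~$\mathbf 1$) gives $X=0$. The divergence is in how one deduces $M=0$. The paper simply invokes Neeman's chromatic result \cite[Lemma~2.12]{Neeman:chromatic}, which applies to \emph{any} complex over \emph{any} noetherian commutative ring; it does not use hereditarity at this step. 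You instead give a self-contained commutative-algebra argument exploiting the hereditary hypothesis: $R$ is a finite product of Dedekind domains, one reduces to a single Dedekind domain, the generic point forces $M$ to be torsion, and at nonzero primes the length-one resolution by a uniformizer turns the $\mathrm{Tor}$-vanishing into invertibility of $\pi$ on $M_{\mathfrak p}$, whence $M_{\mathfrak p}=0$. The paper's route is shorter and more general; yours is elementary and avoids an external lemma, making the role of hereditarity explicit. One small over-statement: at the very end noetherianity is not actually what closes the argument — for \emph{any} commutative ring, $M_{\mathfrak p}=0$ for all primes $\mathfrak p$ already forces $M=0$ (pick $m\neq 0$, take a maximal ideal containing $\mathrm{Ann}(m)$). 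This is harmless, since $R$ is noetherian anyway, but the attribution is off.
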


\begin{proof}
Let $M$ be a module (or more generally a complex of modules) over the noetherian commutative ring $R$, such that $M\otimes_R^\mathsf L k(\mathfrak p)=0$ for all $\mathfrak p\in \Spec(R)$. Then, by \cite[Lemma~2.12]{Neeman:chromatic}, $M=0$ in the derived category $\cD(R)$.
If $\sigma(X)=\emptyset$, then  by Corollary~\ref{cor:derived}, the $R$-module $M:=\pi_*(X)$ satisfies the previous condition and so $\pi_*(X)=0$. This allow us to conclude, by hypothesis~(1), that $X=0$.
\end{proof}

This ends the proof of Theorem~\ref{thm:hereditary_spectrum}. Now Corollary~\ref{cor:inverse_rho} follows immediately from Lemma~\ref{lemma:rho_inverse}, the definition of the support datum 
$(\Spec (R), \tilde \sigma) = (\Spec(R), \sigma)\!\!\mid_{\cT^c}$ on~$\cT^c$, and the definition of the induced map~$\varphi:\Spec(R)\to \Spc(\cT^c)$.

%-----------------------------------------------------------------------
\section{Proof of Theorem~\ref{thm:main3}}\label{sec:proof3}
%-----------------------------------------------------------------------

Let us begin this section with a few recollections and results on Hochschild and periodic cyclic homology. In \S\ref{sub:conclusion}, making use of these results, we will conclude the proof of Theorem~\ref{thm:main2}. Given a $k$-algebra $A$, we will denote by $\underline{A}$ the associated dg category with one object and with $A$ as the dg algebra of endomorphisms (concentrated in degree zero).

%-----------------------------------------------------------------------
\subsection{Hochschild homology}\label{sec:HH}
%-----------------------------------------------------------------------
Recall from \cite[Example~7.9]{CT1} that Hochschild homology ($HH$) is an example of a localizing invariant which is symmetric monoidal. By \cite[Thm.~7.5]{CT1}, we obtain then induced tensor triangulated functors
\begin{eqnarray}
\label{eq:HH_basic}
 \overline{HH}: \Mot^a_k \too \cD(k) &&  \overline{HH}: \Mot^l_k \too \cD(k)
\end{eqnarray}
such that $\overline{HH}(U^a_k(\cA))=HH(\cA)$ and $\overline{HH}(U^l_k(\cA))=HH(\cA)$ for every dg category $\cA$. By restricting them to the monogenic core we obtain tensor triangulated functors
\begin{eqnarray}\label{eq:HH}
 \overline{HH} : \Core^a_k=\langle U^a_k(\underline{k}) \rangle \to \langle k \rangle = \cD^{\textrm{perf}}(k) &&\overline{HH} : \Core^l_k \to \cD^{\textrm{perf}}(k)
\end{eqnarray}
with values in the derived category of perfect complexes.
%-----------------------------------------------------------------------
\subsection{Periodic cyclic homology}\label{sec:HP}
%-----------------------------------------------------------------------
Recall from \cite[Example~7.10]{CT1} that the mixed complex construction ($C$) gives rise to a tensor triangulated functors
\begin{eqnarray}\label{eq:mixed}
\overline{C}: \Mot^a_k \too \cD(\Lambda) && \overline{C}: \Mot^l_k \too \cD(\Lambda)
\end{eqnarray}
where $\Lambda$ denotes the dg algebra $k[\epsilon]/\epsilon^2$ with $\epsilon$ of degree~$-1$ and $d(\epsilon)=0$. The symmetric monoidal structure on $\cD(\Lambda)$ is induced by the tensor product on the underlying complexes of $k$-modules. The $\otimes$-unit is the complex $k$ (concentrated in degree zero) with the trivial $\epsilon$-action. As explained by Kassel in~\cite[page~210]{Kassel}, periodic cyclic homology ($HP$) can be expressed as the composition of the mixed complex construction with the $2$-perioditization functor (see Definition~\ref{def:periodic}). We obtain then triangulated functors
\begin{eqnarray}\label{eq:HP}
\overline{HP}: \Mot^a_k \stackrel{\overline{C}}{\to} \cD(\Lambda) \stackrel{\widehat{(-)}}{\to} \cD_2(k) && \overline{HP}: \Mot^l_k \stackrel{\overline{C}}{\to} \cD(\Lambda) \stackrel{\widehat{(-)}}{\to} \cD_2(k)
\end{eqnarray}
with values in the derived category of $2$-periodic complexes (see Definition~\ref{def:2-periodic}).

%\goncalo{[[In what follows, I only want to use the letters $M$ and $N$ when I am speaking about non-commutative motives]]}
\begin{definition}\label{def:2-periodic}
A \emph{2-periodic complex $I$} is a complex of $k$-modules which is invariant under the shift functor $(-)[2]$. Equivalently, it consists of two $k$-modules and two $k$-linear maps
\[
\xymatrix{
I_0 \ar@<0.5ex>[r]^-{d} & \ar@<0.5ex>[l]^-d I_1
}
\]
satisfying $d^2=0$ (in both ways). A morphism of $2$-periodic complexes is a morphism of complexes which is invariant under the shift functor $(-)[2]$. The category of $2$-periodic complexes will be denoted by $\cC_2(k)$. The derived category $\cD_2(k)$ of $2$-periodic complexes is obtained from $\cC_2(k)$ by inverting the class of quasi-isomorphisms.
\end{definition}

Note that $\cC_2(k)$ is naturally endowed with a symmetric monoidal structure whose tensor product is given by 
$$
I\hat\otimes J :=
\left(
\xymatrix{
I_0 \!\otimes\! J_0 \,\oplus\, I_1 \!\otimes\! J_1 
\ar@<0.5ex>[rrr]^-{
\mbox{ \scriptsize $
\left[ 
\begin{array}{cc}
\!\! \id\otimes d \! & \! d\otimes \id  \!\! \\
\!\! d\otimes \id \! & \!  -\id\otimes d \!\! 
\end{array} 
\right] $
}
} 
&&& I_0 \otimes J_1 \oplus I_1 \otimes J_0
\ar@<0.5ex>[lll]^-{
\mbox{ \scriptsize $ 
\left[ 
\begin{array}{cc}
\!\! \id\otimes d \! &\! d\otimes \id \!\!  \\
\!\! d\otimes \id \! & \! -\id\otimes d  \!\!
\end{array} 
\right]
$ }
} 
}
\right)
\,.
$$
%\goncalo{[[Could you kindly define this ?]]}
The corresponding $\hat\otimes$-unit is the $2$-periodic complex $k=(k\rightleftarrows 0)$. This tensor product can be naturally derived (using for instance the fact that the standard projective model structure on complexes adapts to $\cC_2(k)$) giving rise to a symmetric monoidal structure on $\cD_2(k)$.

\begin{definition}\label{def:periodic}
The \emph{2-perioditization functor} $\widehat{(-)}: \cD(\Lambda) \to \cD_2(k)$ sends a mixed complex $R$ to the $2$-periodic complex 
\[
\widehat{R}:=\left(\xymatrix{
\prod_{i \,\mathrm{even}} R_i \ar@<0.5ex>[r]^-{d+\epsilon} & \ar@<0.5ex>[l]^-{d+\epsilon}\prod_{i\, \mathrm{odd}} R_i
}
\right) \,.
\]
\end{definition}
Since the $2$-perioditization functor is defined using infinite products and these do not commute with the tensor product, the functor $\widehat{(-)}$ is not symmetric monoidal. However, it is lax monoidal and the natural structure morphisms
$
\eta_{R,S} : \widehat{R}\hat\otimes \widehat{S} \to \widehat{R\otimes S}
$  
are isomorphisms whenever $R$ or~$S$ satisfy Kassel's Property~(P); see \cite[page~211]{Kassel}.

\begin{lemma}\label{lem:gen2}
Let $F: \cT_1 \to \cT_2$ be a lax symmetric monoidal functor between tensor triangulated categories and $\cS$ a set of objects of $\cT_1$; recall that we have a natural structure morphism $\eta_{X,Y}:F(X) \otimes F(Y) \to F(X\otimes Y)$ for any two objects $X$ and $Y$. If $\eta_{X,Y}$ is an isomorphism for every $X$ and $Y$ in~$\cS$, then $\eta_{X,Y}$ is also an isomorphism for every $X$ and $Y$ in~$\langle \cS\rangle$.
\end{lemma}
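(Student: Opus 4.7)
The plan is to use a standard two-step thickness argument, swapping the two variables of $\eta$ in turn. The key observation is that both composites $F(-)\otimes F(Y)$ and $F(-\otimes Y)$ are triangulated (and coproduct-compatible) functors $\cT_1 \to \cT_2$, because in a tensor triangulated category the tensor product is exact in each variable and $F$ is assumed triangulated; moreover, $\eta_{-,Y}: F(-)\otimes F(Y) \Rightarrow F(-\otimes Y)$ is a natural transformation between these two triangulated functors. Hence by the triangulated five-lemma (and a standard retract argument), the full subcategory of objects on which $\eta_{-,Y}$ is an isomorphism is a thick subcategory of $\cT_1$.

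Concretely, I would proceed as follows. First, fix $Y \in \cS$ and set
\[
\cA_Y := \{\, X \in \cT_1 \mid \eta_{X,Y} \text{ is an isomorphism} \,\}.
\]
The object $0$ lies in $\cA_Y$; $\cA_Y$ is closed under $\Sigma^{\pm 1}$ (because $\eta$ is natural in the first variable and both functors above commute with suspension); it is closed under cones (apply the five-lemma to the morphism of distinguished triangles induced by a triangle $X\to X'\to X''\to \Sigma X$); and it is closed under retracts (an idempotent splitting of $\eta_{X,Y}$ is transported along any retract). By hypothesis $\cS\subset \cA_Y$, so $\langle \cS\rangle \subset \cA_Y$; that is, $\eta_{X,Y}$ is an isomorphism for every $X \in \langle \cS\rangle$ and every $Y \in \cS$.

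Second, I swap the roles. Fix $X \in \langle \cS\rangle$ (now arbitrary) and set
\[
\cB_X := \{\, Y \in \cT_1 \mid \eta_{X,Y} \text{ is an isomorphism} \,\}.
\]
By the previous paragraph $\cS \subset \cB_X$, and the identical thickness argument (applied this time in the second variable, where the functors $F(X)\otimes F(-)$ and $F(X\otimes -)$ are again triangulated) shows that $\cB_X$ is a thick subcategory of $\cT_1$. Hence $\langle \cS\rangle \subset \cB_X$, which is exactly the desired conclusion.

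I do not expect any serious obstacle here; the only point requiring a moment's care is verifying that $\eta_{-,Y}$ and $\eta_{X,-}$ are natural transformations of \emph{triangulated} functors, which boils down to the compatibility of $F$ with suspensions and triangles and to the bi-exactness of $\otimes$ in each tensor triangulated category, both of which are built into the setup.
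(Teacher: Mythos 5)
Your proof is correct and follows essentially the same two-step thickness argument as the paper: show that the locus where $\eta_{-,Y}$ is an isomorphism is thick for fixed $Y$, deduce $\langle\cS\rangle\subset\cA_Y$ for $Y\in\cS$, then swap variables. Your second step (fixing $X\in\langle\cS\rangle$ and showing $\cS\subset\cB_X$ with $\cB_X$ thick) is a slightly cleaner rendering of the paper's "same arguments" clause, but the underlying idea is identical.
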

\begin{proof}
For every fixed object $Y$ in $\cT_1$, the subcategory 
$$\mathcal U_Y:=\{X\in \cT_1\mid \eta_{X,Y} \textrm{ is an isomorphism} \} \subset \cT_1$$
is thick. This follows immediately from the fact that $\eta$ is a natural transformation between triangulated functors $F(-)\otimes F(Y)$ and $F(-\otimes Y)$, and from the classical five lemma in homological algebra. Therefore, whenever $Y\in \cS$, we have $\cS\subset \mathcal U_Y$ by hypothesis, and so we conclude that $\langle\cS \rangle\subset \mathcal U_Y$. The same arguments imply that $\{Y\in \cT_1\mid \cS \subset \mathcal U_Y \}$ is a thick subcategory of $\cT_1$ which contains~$\cS$. Hence, it contains $\langle\mathcal S\rangle$, \ie $\eta_{X,Y}$ is an isomorphism for all $X,Y\in \langle \cS\rangle$.
\end{proof}

\begin{corollary}
\label{corollary:strong_monoidal}
When restricted to the monogenic cores, the periodic cyclic homology functors \eqref{eq:HP} yield tensor triangulated functors
\begin{eqnarray}\label{eq:HP1}
\overline{HP} : \Core^a_k \too \cD_2(k)^c && \overline{HP} : \Core^l_k \too \cD_2(k)^c
\end{eqnarray}
\end{corollary}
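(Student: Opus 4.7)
The plan is to verify separately the two assertions packaged into the statement: first, that the restriction of $\overline{HP}$ to the monogenic cores is strong symmetric monoidal; second, that this restriction takes values in $\cD_2(k)^c$.

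For the monoidal part, I would start by recording that the mixed-complex functor $\overline{C}$ is strong symmetric monoidal (as recalled in \S\ref{sec:HP}), whereas the $2$-perioditization $\widehat{(-)}$ is only lax monoidal. Consequently the composition $\overline{HP} = \widehat{(-)} \circ \overline{C}$ is naturally a lax symmetric monoidal triangulated functor from $\Mot^a_k$ (respectively $\Mot^l_k$) to $\cD_2(k)$. To upgrade laxness to strongness on the monogenic core, I would invoke Lemma \ref{lem:gen2} with $\cT_1 = \Mot^a_k$, $F = \overline{HP}$, and $\cS = \{U^a_k(\underline{k})\}$ (and identically with $U^l_k(\underline{k})$ in the localizing case). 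Since by definition $\Core^a_k = \langle U^a_k(\underline{k})\rangle$, it suffices to check that the single structure map $\eta_{X,X}$ with $X = U^a_k(\underline{k})$ is an isomorphism.

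This base case reduces to a direct unit computation. Since $\overline{C}$ is strong monoidal, $\overline{C}(U^a_k(\underline{k}))$ is canonically isomorphic to the $\otimes$-unit of $\cD(\Lambda)$, namely the trivial mixed complex $k$ concentrated in degree zero with vanishing $\epsilon$-action. A direct inspection of Definition \ref{def:periodic} shows that $\widehat{k} = (k \rightleftarrows 0)$, which is the $\otimes$-unit of $\cD_2(k)$. The formula for $\hat\otimes$ then gives $\widehat{k}\hat\otimes \widehat{k} \simeq \widehat{k}$ through the canonical unitor, and under this identification $\eta_{k,k}$ coincides with the identity; equivalently, one observes that the trivial mixed complex satisfies Kassel's Property~(P), so that the isomorphism criterion recorded in \S\ref{sec:HP} applies.

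For the compactness assertion, I would argue that any triangulated functor sends $\Core^a_k = \langle U^a_k(\underline{k})\rangle$ into the thick subcategory of the target generated by the image of $U^a_k(\underline{k})$. By the previous paragraph, $\overline{HP}(U^a_k(\underline{k})) \simeq \widehat{k}$ is the $\otimes$-unit of $\cD_2(k)$, which is a compact object; hence the thick subcategory it generates lies inside $\cD_2(k)^c$, and the image of $\overline{HP}|_{\Core^a_k}$ does too. The argument for $\Core^l_k$ is identical, with $U^a_k(\underline{k})$ replaced by $U^l_k(\underline{k})$. I do not anticipate any serious obstacle: the only non-formal input is the base-case identification of $\eta_{k,k}$ with an isomorphism, which is a purely coherence-theoretic statement about the unit of $\cD_2(k)$.
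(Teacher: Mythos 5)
Your proof is correct and follows essentially the same route as the paper: both rely on Lemma~\ref{lem:gen2} together with the observation that the $2$-perioditization sends the $\otimes$-unit of $\cD(\Lambda)$ to the $\hat\otimes$-unit of $\cD_2(k)$, which forces the structure map $\eta$ on units to be an isomorphism. The only addition you make is to spell out the compactness claim (that $\widehat{k}$ is compact in $\cD_2(k)$ and hence so is the thick subcategory it generates), which the paper leaves implicit.
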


\begin{proof}
Since the $2$-perioditization functor sends the $\otimes$-unit of $\cD(\Lambda)$ to the $\hat\otimes$-unit of $\cD_2(k)$,  the natural structure morphism $\eta:\widehat{k} \hat\otimes \widehat{R}\to \widehat{k\otimes R}$ is an isomorphism for any mixed complex $R$. The result now follows from Lemma~\ref{lem:gen2} and the fact that the functors \eqref{eq:mixed} are symmetric monoidal.
\end{proof}
%---------------------------------------------------------
\subsection{Conclusion of the proof of Theorem~\ref{thm:main2}}\label{sub:conclusion}
%---------------------------------------------------------
Recall from \S\ref{sec:proofmain2} that it remains to prove the descriptions (a) and (b) of the unique point of the fiber $\rho^{-1}(p\bbZ)$. 

\begin{lemma}
\label{lemma:fiber_0}
Whenever the base ring~$k$ is a field, the two kernels 
$$\mathrm{Ker}(\overline{HH}) = \{M\in \Core^a_k \mid \overline{HH}(M)= 0 \}$$
$$\mathrm{Ker}(\overline{HP}) = \{M\in \Core^a_k \mid \overline{HP}(M)= 0 \} $$
are prime $\otimes$-ideals of $\Core^a_k$. Similarly for the monogenic core $\Core^l_k$.
\end{lemma}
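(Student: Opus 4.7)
The plan is to verify the four defining properties of a proper prime $\otimes$-ideal for each of the four kernels. First, observe that by~\eqref{eq:HH} and Corollary~\ref{corollary:strong_monoidal}, the two restrictions
\[
\overline{HH}: \Core^a_k \too \cD^{\mathrm{perf}}(k)
\quad \text{and} \quad
\overline{HP}: \Core^a_k \too \cD_2(k)^c
\]
are tensor triangulated functors (exact and strong symmetric monoidal), and the analogous statement holds for $\Core^l_k$. Consequently each kernel is automatically a thick $\otimes$-ideal, and is proper since a strong monoidal exact functor sends $\mathbf 1 = U^a_k(\underline{k})$ to $k$, which is nonzero in the target.

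The only substantive verification is primeness. Since both functors are strong monoidal, we have natural isomorphisms
\[
\overline{HH}(M \otimes N) \simeq \overline{HH}(M) \otimes_k^{\mathsf L} \overline{HH}(N)
\quad \text{and} \quad
\overline{HP}(M \otimes N) \simeq \overline{HP}(M) \hat\otimes \overline{HP}(N)\,,
\]
so the primeness of each kernel reduces to the claim that the trivial subcategory $\{0\}$ is a prime $\otimes$-ideal of the target, i.e.\ that the derived tensor product detects the zero object.

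For $\cD^{\mathrm{perf}}(k)$ with $k$ a field this is classical: every perfect complex is quasi-isomorphic to the direct sum of its cohomology vector spaces placed in the appropriate degrees (with trivial differential), and the standard Künneth formula $H^n(X \otimes_k^{\mathsf L} Y) \simeq \bigoplus_{i+j=n} H^i(X) \otimes_k H^j(Y)$ forces $X \otimes_k^{\mathsf L} Y \simeq 0$ to imply $H^*(X)=0$ or $H^*(Y)=0$. For $\cD_2(k)^c$ with $k$ a field, I would run the analogous argument for $2$-periodic complexes: taking cohomology identifies $\cD_2(k)^c$ with finite-dimensional $\bbZ/2$-graded $k$-vector spaces, and a $\bbZ/2$-graded Künneth formula (of exactly the form that Proposition~\ref{prop:KT} will later make available in the abstract setting, and which degenerates over a field because $\mathrm{Tor}^k_1$ vanishes) gives the required detection.

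The main obstacle I anticipate is nailing down this Künneth identification for $\cD_2(k)^c$, since the $2$-perioditization functor $\widehat{(-)}$ is only lax monoidal (infinite products do not in general commute with $\otimes$), so one has to either invoke Kassel's property (P) to check that the structural comparison $\widehat{R}\hat\otimes \widehat{S}\to \widehat{R\otimes S}$ is an isomorphism on the mixed complexes in the image of $\overline{C}|_{\Core^a_k}$, or else argue directly at the level of cohomology of $2$-periodic complexes over the field $k$. Once that detection step is in place, exactly the same proof transports to $\Core^l_k$ by replacing the functors \eqref{eq:HH} and~\eqref{eq:HP1} with their localizing counterparts from~\eqref{eq:HH_basic} and~\eqref{eq:HP}, using that the localizing invariants $HH$ and $HP$ are again symmetric monoidal when restricted to the monogenic core.
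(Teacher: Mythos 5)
Your argument is correct and follows essentially the same route as the paper: reduce, via strong monoidality of the restricted functors (equations~\eqref{eq:HH} and Corollary~\ref{corollary:strong_monoidal}), to the observation that $\{0\}$ is a prime $\otimes$-ideal in each target, because over a field both $\cD^{\mathrm{perf}}(k)$ and $\cD_2(k)^c$ are (abelian) semi-simple and the (graded) tensor product of nonzero $k$-vector spaces is nonzero. The obstacle you anticipate is not actually one: once Corollary~\ref{corollary:strong_monoidal} gives strong monoidality of $\overline{HP}$ on the core, the detection of zero objects is an internal question about the monoidal structure $\hat\otimes$ on $2$-periodic complexes over the field~$k$, requiring no further reference to the lax-monoidality of $\widehat{(-)}$ or to Kassel's property~(P).
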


\begin{proof}
Since by hypothesis $k$ is a field, the derived category $\cD_2(k)$ of $2$-periodic complexes is abelian and even semi-simple. Moreover, it has the property that $M\hat\otimes N=0$ if and only if $M=0$ or $N=0$. In other words, $\{0\}$ is a prime $\otimes$-ideal of $\cD_2(k)$. Consequently, the kernel $\mathrm{Ker}(\overline{HP})$ is a prime $\otimes$-ideal of $\Core^a_k$, \ie a point of $\Spc(\Core^a_k)$. Similarly, the category $\cD(k)^c$ of perfect complexes is abelian semi-simple, and $\{0\}$ is a prime $\otimes$-ideal of~$\cD(k)^c$. Hence, the kernel $\mathrm{Ker}(\overline{HH})$ is a prime $\otimes$-ideal of $\Core^a_k$. Finally, the arguments for the monogenic core $\Core^l_k$ are similar.
\end{proof}

Let~$k$ be either $\bbF_q$ or $\overline{\bbF_q}$, as in Theorem~\ref{thm:main2}. 
We have proved in~\S\ref{sec:proofmain2} that in this case the fiber $\rho^{-1}(p\bbZ)$ contains precisely one point. 
Thus it remains only to show that the points $\mathrm{Ker}(\overline{HH})$ and $\mathrm{Ker}(\overline{HP})$ of $\Spc(\Core^a_k)$ (see Lemma~\ref{lemma:fiber_0}) are both sent to $p\bbZ\in \Spec(\bbZ)$ by the comparison map~$\rho$.
Consider the two commutative diagrams
$$
\xymatrix{
\Spc(\cD(k)^c) \ar[d]^\simeq_\rho \ar[rr]^{\Spc(\overline{HH})} &&
 \Spc(\Core^a_k) \ar[d]^\rho \\
 \Spec(k) \ar[rr] &&
 \Spec(\bbZ)
 }
 \quad
\xymatrix{
  \Spc(\cD_2(k)^c) \ar[rr]^{\Spc(\overline{HP})} \ar[d]^\simeq_\rho &&
   \Spc(\Core^a_k) \ar[d]^\rho \\
  \Spec(k) \ar[rr] &&
   \Spec(\bbZ)
}
$$
obtained by applying Balmer's contravariant functor $\Spc(-)$ to the tensor triangulated functors \eqref{eq:HH} and \eqref{eq:HP1}, respectively, and using the naturality of $\rho$ (see \cite[Thm.~5.3(c)]{Balmer-spectra}). The left hand-side vertical homeomorphisms are instantiations of Theorem~\ref{thm:hereditary_spectrum} and Corollary~\ref{cor:extra}, respectively. 
Now, since the field~$k$ has characteristic~$p$ and $\Spec(k)=\{0\}$, we conclude that $\rho(\mathrm{Ker}(\overline{HH}))=\rho(\mathrm{Ker}(\overline{HP}))=p\bbZ$. Finally, the same arguments hold for the monogenic core $\Core^l_k$. This ends the proof of Theorem~\ref{thm:main2}.

%-----------------------------------------------------------------------
\subsection{Enlarging the monogenic core}\label{sub:enlarging}
%-----------------------------------------------------------------------
Let us start with some general results.
\begin{lemma}\label{lem:tech1}
Let $\cT$ be a tensor triangulated category and $\cS$ a set of objects in $\cT$. If $\cS\otimes\cS\subset \cS$, then $\langle\cS\rangle \otimes \langle\cS \rangle \subset \langle\cS \rangle$. 
%\textup(Here $\langle \cS\rangle$ denotes the smallest thick subcategory containing~$\cS$.\textup) 
\end{lemma}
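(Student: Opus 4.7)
The plan is to use the standard two-step "thickening" argument, analogous to the proof of Lemma~\ref{lem:gen2} above.

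First, I would fix an object $Y \in \cS$ and consider the full subcategory
\[
\mathcal{U}_Y := \{X \in \cT \mid X \otimes Y \in \langle \cS\rangle\}.
\]
Since the functor $-\otimes Y : \cT \to \cT$ is exact (tensor product is exact in each variable) and $\langle\cS\rangle$ is a thick triangulated subcategory, standard preservation properties show that $\mathcal{U}_Y$ is itself thick. By the hypothesis $\cS \otimes \cS \subset \cS \subset \langle\cS\rangle$, every $X \in \cS$ lies in $\mathcal{U}_Y$, so $\cS \subset \mathcal{U}_Y$, and hence $\langle\cS\rangle \subset \mathcal{U}_Y$. In other words, $X \otimes Y \in \langle\cS\rangle$ for every $X \in \langle\cS\rangle$ and every $Y \in \cS$.

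Next, I would fix an arbitrary $X \in \langle\cS\rangle$ and repeat the argument in the second variable by setting
\[
\mathcal{V}_X := \{Y \in \cT \mid X\otimes Y \in \langle\cS\rangle\}.
\]
Exactness of $X\otimes -$ and thickness of $\langle\cS\rangle$ imply that $\mathcal{V}_X$ is again thick. The previous paragraph shows $\cS \subset \mathcal{V}_X$, so $\langle\cS\rangle \subset \mathcal{V}_X$. This yields $X \otimes Y \in \langle\cS\rangle$ for all $X,Y \in \langle\cS\rangle$, which is exactly the desired conclusion $\langle\cS\rangle \otimes \langle\cS\rangle \subset \langle\cS\rangle$.

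There is no real obstacle here: the only thing to check carefully is that for an exact functor $F$ and a thick subcategory $\mathcal{D} \subset \cT$, the preimage $F^{-1}(\mathcal{D})$ is thick, which is immediate from the five lemma (for distinguished triangles) together with the fact that $F$ preserves direct summands. The argument is essentially the same device used in Lemma~\ref{lem:gen2}, just applied to the property "$X\otimes Y \in \langle\cS\rangle$" instead of "$\eta_{X,Y}$ is an isomorphism".
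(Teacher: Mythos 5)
Your proof is correct and is essentially the same argument as in the paper: the paper considers the single subcategory $\mathcal W=\{X\in\cT\mid X\otimes\langle\cS\rangle\subset\langle\cS\rangle\}$ and notes it is thick and contains $\cS$ (which tacitly uses your first step), whereas you spell out the two "thickening" passes explicitly. Same idea, just written out in more detail.
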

\begin{proof}
Consider the full subcategory
$\mathcal W:=\{X \in \cT\, |\, X \otimes \langle \cS \rangle \subset \langle \cS \rangle\}$ of~$\cT$. The exactness of the tensor product implies that $\mathcal{W}$ is a thick triangulated subcategory of~$\cT$. Since $\cS \otimes \cS \subset \cS$, we conclude (using again the exactness of the tensor product) that~$\cS$, and hence~$\langle \cS\rangle$, is contained in~$\mathcal W$. This achieves the proof.
\end{proof}

\begin{corollary}\label{cor:min_tensortriang}
Let $\mathcal U$ be a set of objects in a tensor triangulated category $\cT$. Then, the smallest thick tensor triangulated subcategory of $\cT$ containing~$\mathcal U$ is given by
\[
\langle \{\mathbf1\}\cup \{X_1\otimes \cdots \otimes X_n \mid X_i\in \mathcal U, n\geq1 \} \rangle \,.
\]
In other words, it is the thick triangulated subcategory generated by the tensor multiplicative system generated by~$\mathcal U$.
\end{corollary}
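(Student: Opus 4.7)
The plan is to deduce this corollary directly from Lemma~\ref{lem:tech1}. Let me denote by
\[
\cS := \{\mathbf1\}\cup \{X_1\otimes \cdots \otimes X_n \mid X_i\in \mathcal U,\, n\geq1 \}
\]
the tensor multiplicative system generated by $\mathcal U$ (together with the $\otimes$-unit), so that the proposed description is $\langle \cS \rangle$. There are two inclusions to check, and the ``smallest containing $\mathcal U$'' part will follow by a standard argument.

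First I would establish that $\langle \cS \rangle$ is itself a tensor triangulated subcategory of $\cT$. By construction $\langle \cS \rangle$ is thick and contains the $\otimes$-unit $\mathbf 1 \in \cS$, so it only remains to verify closure under the tensor product. For this I would apply Lemma~\ref{lem:tech1}, whose hypothesis $\cS \otimes \cS \subset \cS$ (interpreted up to the canonical isomorphism, which is harmless since thick subcategories are replete) is immediate: concatenating two finite $\otimes$-words in $\mathcal U$ produces another such word, while the unit $\mathbf 1$ acts as identity on either side. Thus $\langle \cS \rangle \otimes \langle \cS \rangle \subset \langle \cS \rangle$.

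Next I would verify minimality. Any thick tensor triangulated subcategory $\cC \subset \cT$ containing $\mathcal U$ must contain $\mathbf 1$ (being a tensor triangulated subcategory) and, by closure under~$\otimes$, every finite tensor product $X_1 \otimes \cdots \otimes X_n$ of objects of $\mathcal U$. Hence $\cC \supseteq \cS$, and since $\cC$ is thick, $\cC \supseteq \langle \cS \rangle$. Combining with the previous paragraph, $\langle \cS \rangle$ is the smallest such subcategory. I do not foresee a real obstacle here; the only point requiring care is the purely formal distinction between equality and canonical isomorphism when invoking Lemma~\ref{lem:tech1}, and this is dispatched by the replete character of $\langle \cS \rangle$.
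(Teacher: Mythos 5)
Your proposal is correct and follows the same route as the paper: take $\cS$ to be the unit together with all finite tensor words in $\mathcal U$, invoke Lemma~\ref{lem:tech1} to see that $\langle \cS\rangle$ is closed under $\otimes$, and then observe that any thick $\otimes$-triangulated subcategory containing $\mathcal U$ must contain $\cS$ and hence $\langle\cS\rangle$. Your write-up merely spells out the minimality step a bit more explicitly than the paper does.
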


\begin{proof}
Let $\cS$ be the set $\{\mathbf1\}\cup \{X_1\otimes \cdots \otimes X_n \mid X_i\in \mathcal U, n\geq1 \}$. Since $\cS\otimes \cS\subset \cS$, Lemma \ref{lem:tech1} implies that $\langle \cS \rangle \otimes \langle \cS \rangle \subset \langle \cS \rangle$. The proof now follows from the fact that $\langle \cS\rangle$ is the smallest thick triangulated subcategory of $\cT$ which contains $\cS$ and which satisfies the property $\langle \cS \rangle \otimes \langle \cS \rangle \subset \langle \cS \rangle$.
\end{proof}
Now, recall from~\S\ref{subsec:NCM} that the triangulated category $\Mot^a_k$ of non-commutative motives is compactly generated and that an (essentially small) set of compact generators is given by (the shifts of) the non-commutative motives
\begin{equation}\label{eq:gen}
\{U^a_k(\cA)\,|\, \cA \,\, \textrm{homotopically finitely presented}\} \,.
\end{equation}
Hence, the full subcategory $(\Mot^a_k)^c$ of compact objects can be described as the thick triangulated subcategory of $\Mot^a_k$ generated by the set \eqref{eq:gen}. 
%since these objects are rigid, $(\Mot_k)^c$ is also a $\otimes$-subcategory of~$\Mod_k$.

\begin{notation}
We will denote by $\ECore^a_k$ the thick triangulated subcategory
$$ \ECore^a_k:= \langle  U^a_k(\underline{k}), U^a_k(\underline{k[t]}) ,\ldots, U^a_k(\underline{k[t_1,\ldots,k_n]}) ,\ldots \rangle \subset \Mot^a_k $$
generated by the $\otimes$-unit $U^a_k(\underline{k})$ and by the non-commutative motives associated to the polynomial algebras $k[t_1,\ldots, k_n]$, $n\geq1$. The letter E stands for ``{Enlarged}''.
\end{notation}

\begin{lemma}
The symmetric monoidal structure on $\Mot^a_k$ restricts not only to $(\Mot^a_k)^c$ but moreover to $\ECore^a_k$. We have then natural inclusions 
$$\Core^a_k \subset \ECore^a_k\subset (\Mot^a_k)^c$$
 of tensor triangulated categories.
Moreover, $\ECore^a_k$ is the smallest tensor triangulated subcategory of $\Mot^a_k$ containing the non-commutative motive~$U^a_k(\underline{k[t]})$.
\end{lemma}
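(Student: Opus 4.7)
The plan is to prove both claims uniformly by invoking Lemma~\ref{lem:tech1} and Corollary~\ref{cor:min_tensortriang}. The key input, which I would recall from \cite{CT1} at the outset, is that the functor $U^a_k : \dgcat_k \to \Mot^a_k$ is symmetric monoidal with respect to the tensor product of dg categories used in the Day convolution construction on $\Mot^a_k$; in particular one has natural isomorphisms $U^a_k(\cA) \otimes U^a_k(\cB) \simeq U^a_k(\cA \otimes \cB)$, and the $\otimes$-unit is $U^a_k(\underline{k})$.

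First I would treat $(\Mot^a_k)^c$. By the description recalled just above the statement, $(\Mot^a_k)^c = \langle \cG \rangle$ where $\cG$ is the set \eqref{eq:gen} of non-commutative motives associated to homotopically finitely presented dg categories. Since the class of homotopically finitely presented dg categories is closed under tensor product (a well-known consequence of the fact that $-\otimes \cB$ is a left Quillen functor commuting with homotopy colimits, see~\cite[\S3]{CT1}), the monoidality of $U^a_k$ yields $\cG \otimes \cG \subset \cG$ up to isomorphism. Lemma~\ref{lem:tech1} then produces $(\Mot^a_k)^c \otimes (\Mot^a_k)^c \subset (\Mot^a_k)^c$, so the symmetric monoidal structure restricts to $(\Mot^a_k)^c$. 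For $\ECore^a_k$ the same strategy applies: the defining set $\cS := \{U^a_k(\underline{k[t_1,\ldots,t_n]})\mid n\geq 0\}$ satisfies $\cS \otimes \cS \subset \cS$ because of the obvious isomorphism $\underline{k[t_1,\ldots,t_n]}\otimes \underline{k[s_1,\ldots,s_m]}\simeq \underline{k[t_1,\ldots,t_n,s_1,\ldots,s_m]}$ of dg categories. Hence Lemma~\ref{lem:tech1} again gives $\ECore^a_k \otimes \ECore^a_k \subset \ECore^a_k$.

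The inclusions $\Core^a_k \subset \ECore^a_k \subset (\Mot^a_k)^c$ of tensor triangulated categories are then immediate: the first holds because $U^a_k(\underline{k})$ belongs to $\ECore^a_k$ and $\Core^a_k = \langle U^a_k(\underline{k})\rangle$ is by definition the smallest thick subcategory containing it; the second because every polynomial algebra $k[t_1,\ldots,t_n]$ yields a homotopically finitely presented dg category, so each generator of $\ECore^a_k$ already lies in $(\Mot^a_k)^c = \langle \cG \rangle$.

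Finally, to identify $\ECore^a_k$ as the smallest tensor triangulated subcategory of $\Mot^a_k$ containing $U^a_k(\underline{k[t]})$, I would apply Corollary~\ref{cor:min_tensortriang} with $\mathcal U = \{U^a_k(\underline{k[t]})\}$. That corollary identifies this minimal subcategory with
\[
\langle \{U^a_k(\underline{k})\} \cup \{U^a_k(\underline{k[t]})^{\otimes n}\mid n\geq 1\}\rangle .
\]
Iterating the monoidality of $U^a_k$, we obtain $U^a_k(\underline{k[t]})^{\otimes n} \simeq U^a_k(\underline{k[t_1,\ldots,t_n]})$, and so the displayed category equals $\ECore^a_k$ by definition. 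I do not foresee a real obstacle here: the whole argument is a clean combination of the monoidality of $U^a_k$ with the two elementary categorical lemmas of the previous subsection. The only minor point to double-check is the closure of homotopically finitely presented dg categories under tensor product, which is standard.
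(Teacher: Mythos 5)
Your proposal is correct and follows essentially the same route as the paper: both use Lemma~\ref{lem:tech1} applied to the generating set \eqref{eq:gen} and to the set of polynomial-algebra motives (relying on the monoidality of $U^a_k$ and stability of homotopically finitely presented dg categories under derived tensor product, \cite[Thm.~3.4]{CT1}), and both close with Corollary~\ref{cor:min_tensortriang} and the isomorphism $U^a_k(\underline{k[t]})^{\otimes n}\simeq U^a_k(\underline{k[t_1,\ldots,t_n]})$. The only difference is that you make the polynomial-algebra tensor isomorphism explicit where the paper simply cites the stability result; otherwise the arguments coincide.
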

\begin{proof}
The dg categories $\underline{k[t_1,\ldots,t_n]}, n \geq 1$, associated to the polynomial algebras, are homotopically finitely presented. Hence, the first assertion follows from Lemma~\ref{lem:tech1} applied to the triangulated category $\Mot^a_k$ and to the sets 
\begin{eqnarray*}
\cS:= \eqref{eq:gen} & \mathrm{and} & \cS:= \{U^a_k(\underline k)\}\cup \{U^a_k(\underline{k[t_1,\ldots, t_n]}) \mid n\geq 1 \}\,.
\end{eqnarray*}
In both cases the inclusion $\cS\otimes\cS\subset \cS$ follows from the fact that the functor $U^a_k$ is symmetric monoidal and from the stability of homotopically finitely presented dg categories under (derived) tensor product; see \cite[Thm.~3.4]{CT1}. The second assertion follows from Corollary \ref{cor:min_tensortriang} and from the natural isomorphism $$U^a_k(\underline{k[t_1]})\otimes \cdots \otimes U^a_k(\underline{k[t_n]})\simeq U^a_k(\underline{k[t_1,\ldots,t_n]})\,.$$
\end{proof}
%-----------------------------------------------------------------------
\subsection{Two distinct prime $\otimes$-ideals of $\ECore^a_k$}
%----------------------------------------------------------------------- 
The Hochschild homology functor~\eqref{eq:HH_basic} restricts to a tensor triangulated functor
\begin{equation}\label{eq:E_HH}
\overline{HH} : \ECore^a_k \too \mathcal D(k) \,.
\end{equation}
Note that, in contrast with \eqref{eq:HH}, this functor does not take values in $\cD(k)^c=\cD^{\mathrm{perf}}(k)$. The reason for this is that $\overline{HH}(U^a_k(\underline{k[t]}))=HH(k[t])$ and, as explained in \cite[Example~6.1.7]{Rosenberg}, we have the following computation
\begin{equation}\label{eq:comp1}
HH_n(k[t]) \simeq \left\{ \begin{array}{ll} k[t] &  n=0,1 \\ 0 & \mathrm{otherwise}. \end{array} \right.
\end{equation}

\begin{lemma}
\label{lem:strong_E_HP}
Assume that $k$ is a field of characteristic zero. Then, the periodic cyclic homology functor \eqref{eq:HP} yields a tensor triangulated functor
\begin{equation}\label{eq:E_HP}
\overline{HP} : \ECore^a_k \too \cD_2(k)\,.
\end{equation}  
\end{lemma}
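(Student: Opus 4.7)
The functor $\overline{HP}$ is built as the composition $\widehat{(-)}\circ \overline{C}$, where $\overline{C}$ is already symmetric monoidal but $\widehat{(-)}: \cD(\Lambda) \to \cD_2(k)$ is only lax monoidal. My strategy is to show that, after restriction to $\ECore^a_k$, the lax structure morphisms $\eta_{R,S}: \widehat{R}\hat\otimes \widehat{S}\to \widehat{R\otimes S}$ become isomorphisms on the essential image $\overline{C}(\ECore^a_k)\subset \cD(\Lambda)$. Once this is done, $\widehat{(-)}\circ \overline{C}$ restricted to $\ECore^a_k$ is symmetric monoidal and hence a tensor triangulated functor into $\cD_2(k)$, exactly as in Corollary~\ref{corollary:strong_monoidal}.

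\textbf{Reduction to generators.} By the previous lemma, $\ECore^a_k$ is the smallest thick tensor subcategory of $\Mot^a_k$ containing $U^a_k(\underline{k[t]})$. By Corollary~\ref{cor:min_tensortriang}, it therefore coincides with the thick triangulated subcategory generated by the family $\{U^a_k(\underline{k[t_1,\ldots,t_n]}) \mid n\geq 0\}$, using the natural isomorphism $U^a_k(\underline{k[t]})^{\otimes n}\simeq U^a_k(\underline{k[t_1,\ldots,t_n]})$. Since $\overline{C}$ is triangulated and symmetric monoidal, the image $\overline{C}(\ECore^a_k)$ is contained in the thick triangulated subcategory of $\cD(\Lambda)$ generated by $\cS := \{C(k[t_1,\ldots,t_n]) \mid n\geq 0\}$. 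Applying Lemma~\ref{lem:gen2} to $F=\widehat{(-)}$ and this set $\cS$, it suffices to verify that $\eta_{R,S}$ is an isomorphism whenever $R, S\in \cS$.

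\textbf{Main computation and main obstacle.} Here I would invoke the Hochschild--Kostant--Rosenberg theorem in characteristic zero, in its mixed-complex refinement due to Loday--Quillen and Connes: there is a quasi-isomorphism of mixed complexes $C(k[t_1,\ldots,t_n])\simeq (\Omega^\bullet_{k[t_1,\ldots,t_n]/k},\, 0,\, d_{\mathrm{dR}})$, concentrated in degrees $0,\ldots,n$. For any \emph{bounded} mixed complex $R$, the infinite products defining $\widehat{R}$ in Definition~\ref{def:periodic} reduce to finite direct sums, so $\widehat{R}$ is just the $\bbZ/2$-folded totalization of $R$. Since the tensor product of two bounded mixed complexes is again bounded, a direct inspection of the formulas in Definitions~\ref{def:2-periodic} and~\ref{def:periodic} shows that $\eta_{R,S}$ is the canonical identification (equivalently, bounded mixed complexes automatically satisfy Kassel's Property~(P)). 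This concludes the verification and hence the proof. The only nontrivial input is the HKR equivalence \emph{at the level of mixed complexes}, i.e., the fact that the antisymmetrization quasi-isomorphism intertwines Connes' operator~$B$ with the de Rham differential; this step uses characteristic zero in an essential way and is the origin of the hypothesis on~$k$.
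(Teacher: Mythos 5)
Your proof is correct and reaches the conclusion by the same overall reduction as the paper (apply Lemma~\ref{lem:gen2} to the generating set $\{C(k[t_1,\ldots,t_n])\mid n\geq 0\}$), but you justify the base cases differently. The paper simply cites Kassel's Property~(P) for $k[t]$ in characteristic zero, which gives $\eta_{C(k[t]),S}$ invertible for \emph{all} $S$, and then a short recursion handles the full generating set. You instead invoke the mixed-complex HKR equivalence $C(k[t_1,\ldots,t_n])\simeq(\Omega^\bullet_{k[t_1,\ldots,t_n]/k},0,d_{\mathrm{dR}})$ to replace each generator by a bounded model, and then observe that for a pair of bounded mixed complexes the infinite products collapse to finite sums, so $\eta_{R,S}$ is tautologically an isomorphism. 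This is a self-contained unpacking of what lies behind Kassel's Example~(3.3) and makes the role of characteristic zero (via antisymmetrization) more transparent, at the cost of having to invoke the mixed-complex refinement of HKR rather than a single citation. One small imprecision: your parenthetical ``bounded mixed complexes automatically satisfy Kassel's Property~(P)'' overstates things, since Property~(P) concerns $\eta_{R,S}$ for \emph{arbitrary} $S$, and tensoring an infinite-dimensional $k$-module $\bigoplus_j R_j$ with an infinite product need not commute; fortunately your argument never needs this, since after the Lemma~\ref{lem:gen2} reduction \emph{both} arguments are bounded, so the finite-sum observation suffices.
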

\begin{proof}
Since $k$ is a field of characteristic zero, the $k$-algebra $k[t]$ enjoys Kassel's Property~(P); see \cite[Example~(3.3), page~208]{Kassel}. 
This implies that the natural structure morphism 
\[
\eta_{C(k[t]), S} : \widehat{C(k[t])} \,\hat\otimes\, \widehat{S} \stackrel{\sim}{\too} \widehat{(C(k[t])\otimes S)}
\]
of the $2$-perioditization functor (see Definition~\ref{def:periodic}) is an isomorphism for all mixed complexes $S$. Throughout a recursive argument, we conclude that $\eta_{R,S}$ is an isomorphism for all mixed complexes $R$ and $S$ belonging to the set $\{C(k[t_1,\ldots,t_n])\mid n\geq0\}$. Lemma~\ref{lem:gen2} allow us then to conclude the proof.
\end{proof}
We now have all the ingredients needed for the proof of Theorem~\ref{thm:main3}. The same argument as in Lemma~\ref{lemma:fiber_0} (applied to the tensor triangulated functors \eqref{eq:E_HH} and \eqref{eq:E_HP} instead of \eqref{eq:HH} and \eqref{eq:HP1}) shows that 
$$\mathrm{Ker}(\overline{HH})=\{ M \in \ECore^a_k\,|\, \overline{HH}(M)=0\}$$
$$\mathrm{Ker}(\overline{HP})=\{ M \in \ECore^a_k\, |\, \overline{HP}(M)=0\}$$
are two prime $\otimes$-ideals of $\ECore^a_k$, \ie two points of $\Spc(\ECore^a_k)$. Hence, it remains only to prove that they are different.
Consider the natural inclusion of $k$-algebras $k \to k[t]$ and the associated map $U^a_k(\underline{k}) \to U^a_k(\underline{k[t]})$ in $\ECore^a_k$. Since $\ECore^a_k$ is a triangulated category, this latter map can be completed into a distinguished triangle
\begin{equation}\label{eq:triangle}
U^a_k(\underline{k}) \too U^a_k(\underline{k[t]}) \too \cO \too \Sigma(U^a_k(\underline{k}))\, .
\end{equation}
We now show that the non-commutative motive $\cO$ belongs to $\mathrm{Ker}(\overline{HP})$ but not to $\mathrm{Ker}(\overline{HH})$ which implies that $\mathrm{Ker}(\overline{HP})\neq\mathrm{Ker}(\overline{HH})$.  By applying the triangulated functors \eqref{eq:E_HH} and \eqref{eq:E_HP} to \eqref{eq:triangle} we obtain a distinguished triangle
$$ HH(k)  \too HH(k[t]) \too \overline{HH}(\cO) \too \Sigma(HH(k))$$
in $\cD(k)$ and a distinguished triangle
$$ HP(k)  \too HP(k[t]) \too \overline{HP}(\cO) \too \Sigma(HP(k))$$
in $\cD_2(k)$. Since $HH_n(k)=k$ if $n=0$ and is zero otherwise, the above computation \eqref{eq:comp1} allows us to conclude that the map $HH(k) \to HH(k[t])$ is not an isomorphism. This implies that $\overline{HH}(\cO) \neq 0$ in $\cD(k)$ and so $\cO \not\in \mathrm{Ker}(\overline{HH})$. On the other hand, since by hypothesis $k$ is of characteristic zero, we have an isomorphism $HP(k)\stackrel{\sim}{\to} HP(k[t])$; see~\cite[\S3]{Kassel} or \cite[Exercise E.5.1.4, page~167]{Loday}. This implies that $\overline{HP}(\cO)=0$ and so $\cO \in \mathrm{Ker}(\overline{HP})$. The proof of Theorem~\ref{thm:main3} is then concluded.
%------------------------------------------------------------------------
\subsection{Proof of Proposition~\ref{prop:main}}
%------------------------------------------------------------------------
Item (i) follows from Balmer's connectivity criterion \cite[Thm.~7.13]{Balmer-spectra}. 
In what concerns item (ii), note that if we restrict the tensor triangulated functors \eqref{eq:E_HH} and \eqref{eq:E_HP} to the $\otimes$-unit we obtain in both cases (up to isomorphism) the following unique ring homomorphism
$$ \bbZ \simeq \mathsf{End}_{\ECore^a_k}({\bf 1}) \too \mathsf{End}_{\cD(k)} ({\bf 1}) \simeq \mathsf{End}_{\cD_2(k)} ({\bf 1})\simeq k\,.$$
Since by hypothesis $k$ is of characteristic zero, this ring homomorphism is injective. Hence, if $f$ is a non-zero element of $\mathsf{End}_{\ECore^a_k}({\bf 1})$, its images $\overline{HH}(f)$ and $\overline{HP}(f)$ are invertible elements of $\mathsf{End}_{\cD(k)} ({\bf 1})$ and $\mathsf{End}_{\cD_2(k)}(\mathbf1)$, respectively. 
This implies that $\overline{HH}(\mathrm{cone}(f))=0$ and that $\overline{HP}(\mathrm{cone}(f))=0$. By definition of the comparison map $\rho: \Spc(\ECore^a_k) \to \Spec(\bbZ)$ we obtain then 
$$\rho(\mathrm{Ker}(\overline{HH})) := \{ f\in \mathrm{End}_{\ECore^a_k}(\mathbf1)\mid \mathrm{cone}(f)\not\in \mathrm{Ker}(\overline{HH}) \}
\; = \; \{0\} $$
$$\rho(\mathrm{Ker}(\overline{HP})) := \{ f\in \mathrm{End}_{\ECore^a_k}(\mathbf1)\mid \mathrm{cone}(f)\not\in \mathrm{Ker}(\overline{HP}) \}
\; = \; \{0\}\,. 
$$
This concludes the proof of Proposition~\ref{prop:main}.

\end{document}